\documentclass[12pt]{article}

\usepackage[a4paper,hmargin=0.9in,vmargin=1in]{geometry}
\usepackage{amssymb}
\usepackage{amsmath}
\usepackage{amsthm}
\usepackage{stmaryrd}
\usepackage{empheq}
\usepackage{graphicx}
\usepackage{booktabs}
\usepackage{subcaption}
\usepackage[numbers,sort&compress]{natbib}
\usepackage[hidelinks]{hyperref}

\theoremstyle{plain}
\newtheorem{lemma}{Lemma}
\newtheorem{theorem}{Theorem}
\theoremstyle{definition}
\newtheorem{remark}{Remark}

\title{An Implementation-Friendly SDG Scheme based on Cartesian Grids for Stokes Equations with Pressure Robustness and Superconvergence}

\author{
    Bohan Yang\thanks{Department of Mathematics, The Chinese University of Hong Kong, Hong Kong Special Administrative Region. E-mail: \texttt{bhyang@math.cuhk.edu.hk}}
    \and
    Eric T. Chung\thanks{Corresponding author. Department of Mathematics, The Chinese University of Hong Kong, Hong Kong Special Administrative Region. E-mail: \texttt{tschung@math.cuhk.edu.hk}}
}
\date{}

\begin{document}

\maketitle

\begin{abstract}
    This paper develops a staggered discontinuous Galerkin (SDG) scheme based on Cartesian grids for Stokes equations that is simple to implement, intrinsically pressure-robust, and superconvergent for all variables. Instead of the composite meshes used in standard SDG, we construct staggered quadrilateral meshes from Cartesian grids. The scheme takes velocity, pressure, and velocity gradient as unknowns, and employs piecewise-constant spaces with carefully designed staggered continuity. The additional gradient unknowns are locally eliminated via static condensation and can be further removed by mass lumping without loss of accuracy. An explicit pointwise formulation of the scheme is derived, which facilitates implementation and enables a detailed pointwise analysis. We rigorously prove pressure robustness and second-order superconvergence, which holds on general non-uniform Cartesian grids. The scheme is further extended to Navier-Stokes equations by introducing a novel discrete convection term with second-order consistency. Combined with the scalar auxiliary variable (SAV) approach and the Crank-Nicolson (CN) scheme, this yields an unconditionally energy-stable and second-order accurate scheme. Numerical experiments validate the theory and demonstrate accuracy and robustness.
\end{abstract}

\medskip
\noindent\textbf{Keywords:} Staggered Discontinuous Galerkin, Stokes Equations, Cartesian Grids, Pressure Robustness, Superconvergence.

\noindent\textbf{2020 MSC:} 65M15, 65M60, 65N15, 65N30.

\section{Introduction}

The SDG method is a variant of mixed discontinuous Galerkin (DG) methods based on composite meshes, which was initially proposed by Chung and Engquist \cite{chung2006optimal, chung2009optimal} for wave equations, and has since been extensively developed for various fields, including electromagnetics \cite{chung2012staggered, chung2013convergence, zhao2020staggered}, fluid mechanics \cite{chung2012staggereda, kim2013staggered, cheung2015staggered, chung2017analysis, zhao2019priori, kim2020staggered, zhao2020newa, zhao2022staggereda, zhao2022pressure, liu2025analysis}, solid mechanics \cite{lee2016analysis, chung2017discontinuous, zhao2020staggereda, zhao2023lockingfree} and multi-physics \cite{cheung2018mass, zhao2020lowestorder, zhao2021staggered, zhao2022robintype, zhao2023strongly}. The core idea of SDG methods is to partition the computational domain into a primal mesh and its dual mesh, and to enforce the continuity of variables in a staggered manner across interelement boundaries. This staggering, from a DG perspective, yields natural interelement fluxes, thus avoiding the introduction of numerical fluxes or penalty terms, and from a mixed finite element (MFE) perspective, provides compatible pairs of discrete spaces without the necessity of enrichment or stabilization techniques. As a result, the SDG method can directly preserve key structural properties of the underlying continuous problem, such as the adjointness between operators, inf-sup stability, and local mass conservation. Moreover, the composite primal-dual framework facilitates its flexible application to general polygonal and polyhedral meshes \cite{zhao2018staggered, zhao2019staggered, kim2020staggered, zhao2020new,zhao2020staggereda}.

This paper concerns SDG discretizations for Stokes and Navier-Stokes equations. There has been a rich literature: the seminal works \cite{kim2013staggered, cheung2015staggered, chung2017analysis} established the SDG scheme on simplicial meshes; subsequent developments generalized the framework to polygonal and polyhedral meshes \cite{zhao2019staggered, kim2020staggered}; and pressure robustness was investigated in \cite{zhao2022pressure}. In this work, we focus on structured Cartesian grids. Beyond the ease of implementation, such grids admit compact discretizations with high-order accuracy; for example, the classical MAC scheme attains second-order accuracy for both velocity and pressure with a minimal stencil \cite{li2015superconvergence, rui2017stability, li2018superconvergence}. Prior analyses \cite{chung2014staggered, chung2016staggered} have established the superconvergence of the SDG method for certain variables (e.g., the velocity in Stokes flows), by interpreting it as a limiting case of the hybridizable DG method. More recently, tailored SDG schemes on Cartesian grids have been proposed and shown to be stable and optimally convergent \cite{kim2021staggered, kim2024staggered}, but superconvergence and pressure robustness were not addressed. Building on these insights, we develop a zero-order SDG (SDG$_0$) scheme based on Cartesian grids for Stokes and Navier-Stokes equations with the following distinctive features:
\begin{itemize}
    \item straightforward and efficient implementation:
        \begin{itemize}
            \item an explicit pointwise formulation of the SDG scheme is provided,
            \item the additional unknowns (velocity gradient) are partially eliminated via local static condensation and, if desired, can be further removed by mass lumping without loss of accuracy;
        \end{itemize}
    \item second-order superconvergence for all variables---\allowbreak velocity, pressure, and velocity gradient---\allowbreak even on non-uniform grids;
    \item intrinsic pressure robustness, without divergence-free reconstruction \cite{zhao2022pressure} or other post-processing.
\end{itemize}

The main works are summarized as follows. Section~\ref{sec 2} describes the mesh partition and notations. Departing from the standard composite-meshes construction that connects the centerpoint of each element to its vertices, we triangulate each Cartesian block with a uniformly oriented diagonal and, by gluing triangles along horizontal, vertical, and diagonal edges, obtain three staggered quadrilateral meshes. Based on this staggered meshes and employing specific staggered continuity, Section~\ref{sec 3} develops a SDG$_0$ scheme for Stokes equations that treats velocity, pressure and velocity gradient as unknowns. Although several results from the standard SDG theory carry over, they are insufficient to explain the observed second-order accuracy. To this end, Section~\ref{sec 4} introduces tailored basis functions and derives an explicit pointwise formulation of the SDG$_0$ scheme, which facilitates implementation, and, more importantly, enables a detailed pointwise analysis. Besides, local static condensation is applied to eliminate the diagonal entries of the velocity gradient, which can be simply reconstructed as the difference quotients of the velocity. The remaining off-diagonal components are essential for the subsequent discretization of the convection term in Navier-Stokes equations, and can be further removed by standard mass lumping without loss of accuracy, as verified numerically. Section~\ref{sec 5} then presents the error analysis. Inspired by \cite{rui2017stability}, we augment the SDG interpolants with carefully designed second-order corrections to define auxiliary variables and carry out a detailed pointwise analysis. The resulting estimates show that our method is pressure-robust and second-order superconvergent for all variables. Section~\ref{sec 6} extends the method to Navier-Stokes equations. The convection term is discretized by a hybrid strategy that couples a MFE treatment of the velocity gradient with a DG-style discretization of the advective flux, which is proven to be second-order consistent. For time discretization, we employ the scalar auxiliary variable approach together with the Crank-Nicolson scheme \cite{li2020error}, which ensures unconditional energy stability and second-order temporal accuracy. Moreover, it allows for an efficient three-way splitting where each step solves two linear Stokes-type systems and one scalar quadratic equation. Finally, Section~\ref{sec 7} presents several numerical experiments that validate the theoretical results and demonstrate the accuracy and robustness of the proposed method.

To position the proposed scheme within the existing literature, Table~\ref{tab: 1.1} compares it with several representative methods at the lowest order: the classical SDG method for the Stokes equations \cite{kim2013staggered, zhao2019staggered, zhao2022pressure}, a new SDG method for the Brinkman problem \cite{zhao2020newa}, a recently proposed SDG method on rectangular meshes \cite{kim2024staggered}, the MAC scheme \cite{li2015superconvergence, rui2017stability}, the $H(\mathrm{div})$-conforming DG method \cite{cockburn2007note, wang2007new}, and the pressure-robust embedded-hybridized DG (EDG-HDG) method \cite{rhebergen2020embedded, baier-reinio2022analysis}. To enable a uniform comparison across different meshes, the degrees of freedom (DoFs) of all methods are counted on a common Cartesian background grid of $N$ cells: for the SDG methods formulated on polygonal meshes, the primal mesh is taken to be this Cartesian grid and the composite mesh is generated by connecting the centroid of each rectangle to its four vertices; for the DG and EDG-HDG methods, the triangulation is obtained by dividing each rectangle along a diagonal; the remaining methods are discretized directly on the Cartesian grid. We emphasize three points. First, among the SDG family, the present scheme employs the fewest DoFs and is intrinsically pressure-robust without reconstruction. It is also the only SDG method that achieves superconvergence for all variables. Second, as detailed in Section~\ref{sec 4}, the present scheme admits a pointwise formulation that resembles the MAC scheme---sharing the same staggered locations of velocity and pressure and a comparably compact stencil---yet is additionally pressure-robust and attains second-order accuracy for the off-diagonal velocity gradient, which in the MAC scheme is only first-order accurate on non-uniform grids. Third, compared with the pressure-robust DG and EDG-HDG methods, the present scheme uses fewer DoFs while additionally achieving superconvergence for all variables, which has not been established for those methods. In summary, the proposed scheme is the only method in Table~\ref{tab: 1.1} that attains intrinsic pressure robustness together with second-order superconvergence for all variables, at a degree-of-freedom count comparable to the most economical schemes considered.
\begin{table}[htbp]
    \centering
    \small
    \caption{Comparison of the present scheme with representative lowest-order methods for the Stokes problem, organized into (a) discretization and (b) properties. The velocity gradient, velocity, pressure, velocity trace, and pressure trace are denoted by $\boldsymbol{\sigma}$, $\mathbf{u}$, $p$, $\hat{\mathbf{u}}$, and $\hat{p}$, respectively. In panel~(a), the spaces and global degrees of freedom (DoFs) are listed in the same order as the unknowns. Here, $P_0$, $P_1$, and $Q_1$ denote the constant, linear, and bilinear polynomial spaces, respectively, and $BDM_1$ denotes the first-order Brezzi-Douglas-Marini space. The parameter $N$ is the number of cells in the Cartesian background grid. In panel~(b), ''Local elimination'' lists the unknowns that can be removed by local static condensation or mass lumping, and pressure robustness is classified as intrinsic or achieved through a reconstruction operator.}

    \label{tab: 1.1}

    {\itshape (a) Discretization}
    \par\smallskip
    \begin{tabular}{@{}lllll@{}}
        \toprule
        Method & Mesh & Unknowns & Spaces & Global DoFs \\
        \midrule
        \textbf{Present scheme} & Cartesian & $\boldsymbol{\sigma}, \mathbf{u}, p$ & $P_0, P_0, P_0$ & $4N, 2N, N$ \\
        Classical SDG \cite{kim2013staggered, zhao2019staggered, zhao2022pressure} & polygonal & $\boldsymbol{\sigma}, \mathbf{u}, p$ & $P_0, P_0, P_0$ & $8N, 4N, N$ \\
        SDG (Brinkman) \cite{zhao2020newa} & polygonal & $\boldsymbol{\sigma}, \mathbf{u}, \hat{\mathbf{u}}, p$ & $P_0, P_0, P_0, P_0$ & $12N, 4N, 4N, 2N$ \\
        SDG (rect.\ meshes) \cite{kim2024staggered} & Cartesian & $\boldsymbol{\sigma}, \mathbf{u}, p$ & $Q_1, Q_1, P_0$ & $48N, 18N, N$ \\
        MAC \cite{li2015superconvergence, rui2017stability} & Cartesian & $\mathbf{u}, p$ & -- & $2N, N$ \\
        $H(\mathrm{div})$-DG \cite{cockburn2007note, wang2007new} & simplicial & $\mathbf{u}, p$ & $BDM_1, P_0$ & $6N, 2N$ \\
        EDG-HDG \cite{rhebergen2020embedded, baier-reinio2022analysis} & simplicial & $\mathbf{u}, \hat{\mathbf{u}}, p, \hat{p}$ & $P_1, P_1, P_0, P_1$ & $12N, 2N, 2N, 6N$ \\
        \bottomrule
    \end{tabular}

    \par\bigskip

    {\itshape (b) Properties}
    \par\smallskip
    \begin{tabular}{@{}llll@{}}
        \toprule
        Method & Local elimination & Pressure-robustness & Superconvergence \\
        \midrule
        \textbf{Present scheme} & $\boldsymbol{\sigma}$ & intrinsic & $\boldsymbol{\sigma}, \mathbf{u}, p$ \\
        Classical SDG & $\boldsymbol{\sigma}$ & by reconstruction & $\mathbf{u}$ \\
        SDG (Brinkman) & -- & intrinsic & $\mathbf{u}$ \\
        SDG (rect.\ meshes) & $\boldsymbol{\sigma}$ & -- & -- \\
        MAC & -- & -- & $\mathbf{u},p$ \\
        $H(\mathrm{div})$-DG & -- & intrinsic & -- \\
        EDG-HDG & $\mathbf{u},p$ & intrinsic & -- \\
        \bottomrule
    \end{tabular}
\end{table}

\section{Mesh Partition and Notations} \label{sec 2}

In this section, we construct the staggered quadrilateral meshes from Cartesian grids and introduce the necessary notations.

Suppose $\Omega$ is a rectangular domain. Consider a $n_x \times n_y$ Cartesian grid on $\Omega$. The grid points are denoted as $(x_i, y_j)$, where $i = 0, 1, \dots, n_x$ and $j = 0, 1, \dots, n_y$. Let $x_{i + 1/2} = (x_i + x_{i + 1}) / 2$ and $y_{j + 1/2} = (y_j + y_{j + 1}) / 2$ denote the midpoints of the grid segments. For brevity, we adopt the following shorthand for half-integer indices:
\begin{align*}
    i^+ = i + \frac{1}{2}, \quad i^- = i - \frac{1}{2}, \quad
    j^+ = j + \frac{1}{2}, \quad j^- = j - \frac{1}{2}.
\end{align*}
Let $h^x_{i^+} = x_{i + 1} - x_i$ and $h^y_{j^+} = y_{j + 1} - y_j$ denote the lengths of the grid segments. The mesh size is represented as $h = \max \{h^x_{i^+}, h^y_{j^+}\}$. Define the local mesh size at the grid point:
\begin{align*}
    h^x_i &= x_{i^+} - x_{i^-} = \frac{h^x_{i^+} + h^x_{i^-}}{2}, \quad i = 1, 2, \dots, n_x - 1, \\
    h^y_j &= y_{j^+} - y_{j^-} = \frac{h^y_{j^+} + h^y_{j^-}}{2}, \quad j = 1, 2, \dots, n_y - 1, \\
    h^x_0 &= \frac{h^x_{0^+}}{2}, \quad h^x_{n_x} = \frac{h^x_{n_x^-}}{2}, \quad h^y_0 = \frac{h^y_{0^+}}{2}, \quad h^y_{n_y} = \frac{h^y_{n_y^-}}{2}.
\end{align*}

A structured triangular mesh is obtained from the Cartesian grid by dividing each rectangle into two triangles along its diagonal from the top left to the bottom right (see Figure \ref{fig: 2.1}).
\begin{figure}[htbp]
    \centering
    \includegraphics{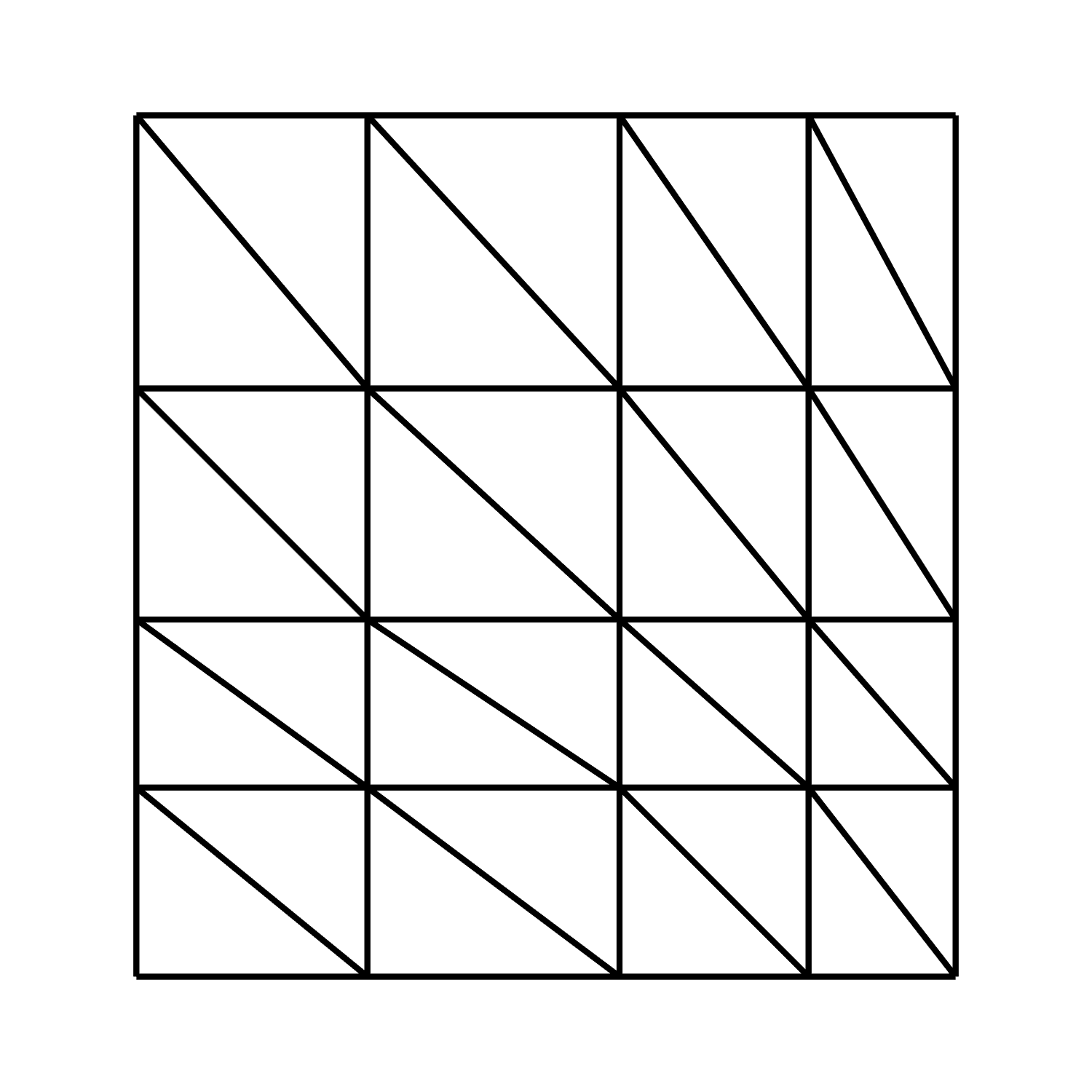}
    \caption{Triangulation of the Cartesian grid with uniformly oriented diagonals.}
    \label{fig: 2.1}
\end{figure}
Let $e_{i^+, j}$ denote the horizontal edge with midpoint $(x_{i^+}, y_j)$, $e_{i, j^+}$ denote the vertical edge with midpoint $(x_i, y_{j^+})$ and $e_{i^+, j^+}$ denote the diagonal edge with midpoint $(x_{i^+}, y_{j^+})$. Let $\mathcal{E}^H_h$, $\mathcal{E}^V_h$ and $\mathcal{E}^D_h$ denote the set of $e_{i^+, j}$, $e_{i, j^+}$ and $e_{i^+, j^+}$, respectively, and let $\Lambda^H_h = \{ (i^+, j) \}$, $\Lambda^V_h = \{ (i, j^+) \}$ and $\Lambda^D_h = \{ (i^+, j^+) \}$ denote the corresponding index sets. Let $\mathring{\mathcal{E}}^H_h$ and $\mathring{\mathcal{E}}^V_h$ denote the set of interior edges in $\mathcal{E}^H_h$ and $\mathcal{E}^V_h$, respectively.
Let $h_e$ denote the length of the edge $e$ and especially let $l_{i^+, j^+}$ denote the length of the diagonal edge $e_{i^+, j^+}$. Let $\mathbf{n} = (n^x, n^y)^{\mathsf{T}}$ and $\mathbf{t} = (t^x, t^y)^{\mathsf{T}}$ denote the normal and tangential unit vectors on edges. Define the jump and average operators on edges:
\begin{align*}
    \llbracket \cdot \rrbracket = (\cdot)^{-} - (\cdot)^{+}, \quad \{\!\!\{ \cdot \}\!\!\} = \frac{(\cdot)^{-} + (\cdot)^{+}}{2},
\end{align*}
where $(\cdot)^{\pm}$ represents the traces on the edge, with $(\cdot)^{+}$ taken from the side toward the normal vector and $(\cdot)^{-}$ from the opposite side.
Let $T^-_{i^+, j^+}$ and $T^+_{i^+, j^+}$ denote the triangles to the lower left and upper right of the diagonal edge $e_{i^+, j^+}$, respectively. Let $\mathcal{T}_h$ denote the set of all triangular elements. Define $T_{i^+, j}$, $T_{i, j^+}$ and $T_{i^+, j^+}$ as the union of triangles sharing the edge $e_{i^+, j}$, $e_{i, j^+}$ and $e_{i^+, j^+}$, respectively (see Figure \ref{fig: 2.2}). Let $\mathcal{T}^H_h$, $\mathcal{T}^V_h$ and $\mathcal{T}^D_h$ denote the set of $T_{i^+, j}$, $T_{i, j^+}$ and $T_{i^+, j^+}$, respectively. As illustrated in Figure \ref{fig: 2.3}, these three quadrilateral meshes collectively form a staggered-meshes system.
\begin{figure}[htbp]
    \centering
    \includegraphics{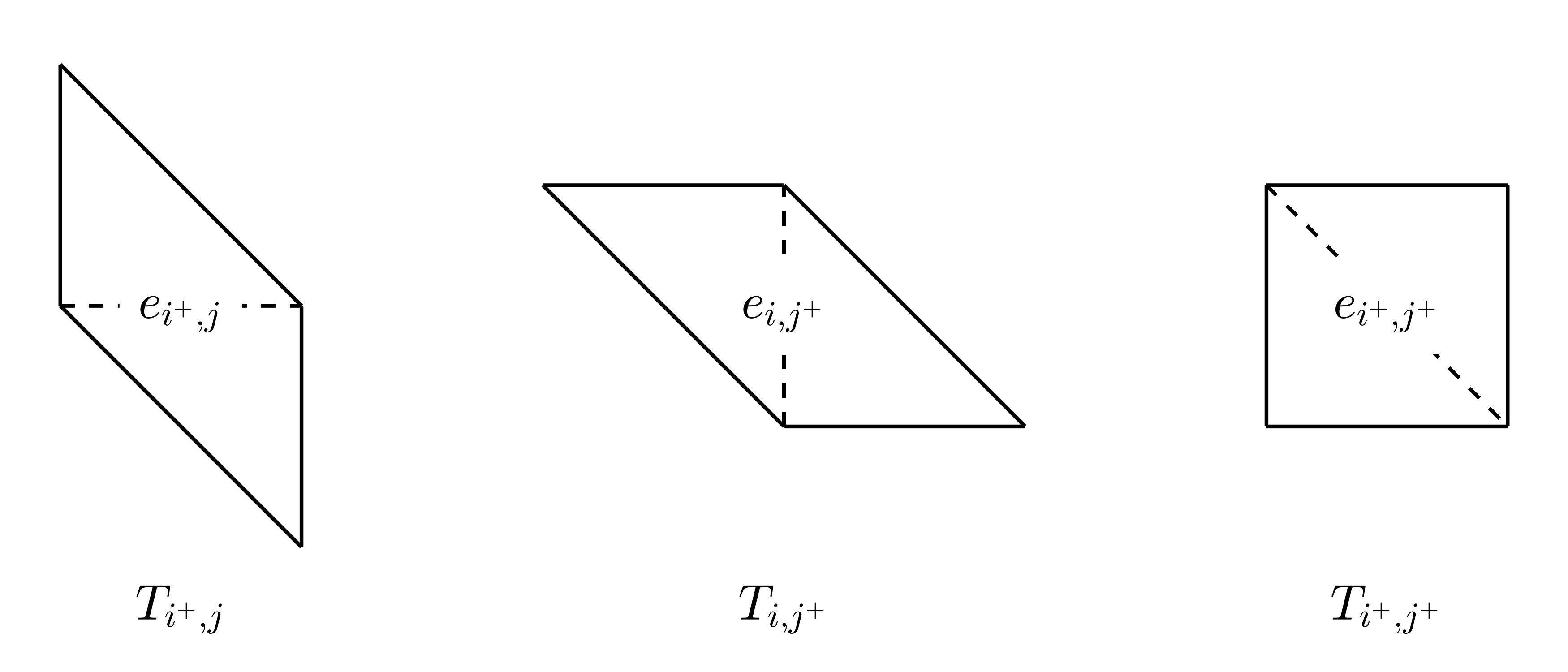}
    \caption{Gluing triangles along different types of edges.}
    \label{fig: 2.2}
\end{figure}
\begin{figure}[htbp]
    \centering
    \begin{subfigure}[h]{0.3\textwidth}
        \centering
        \includegraphics{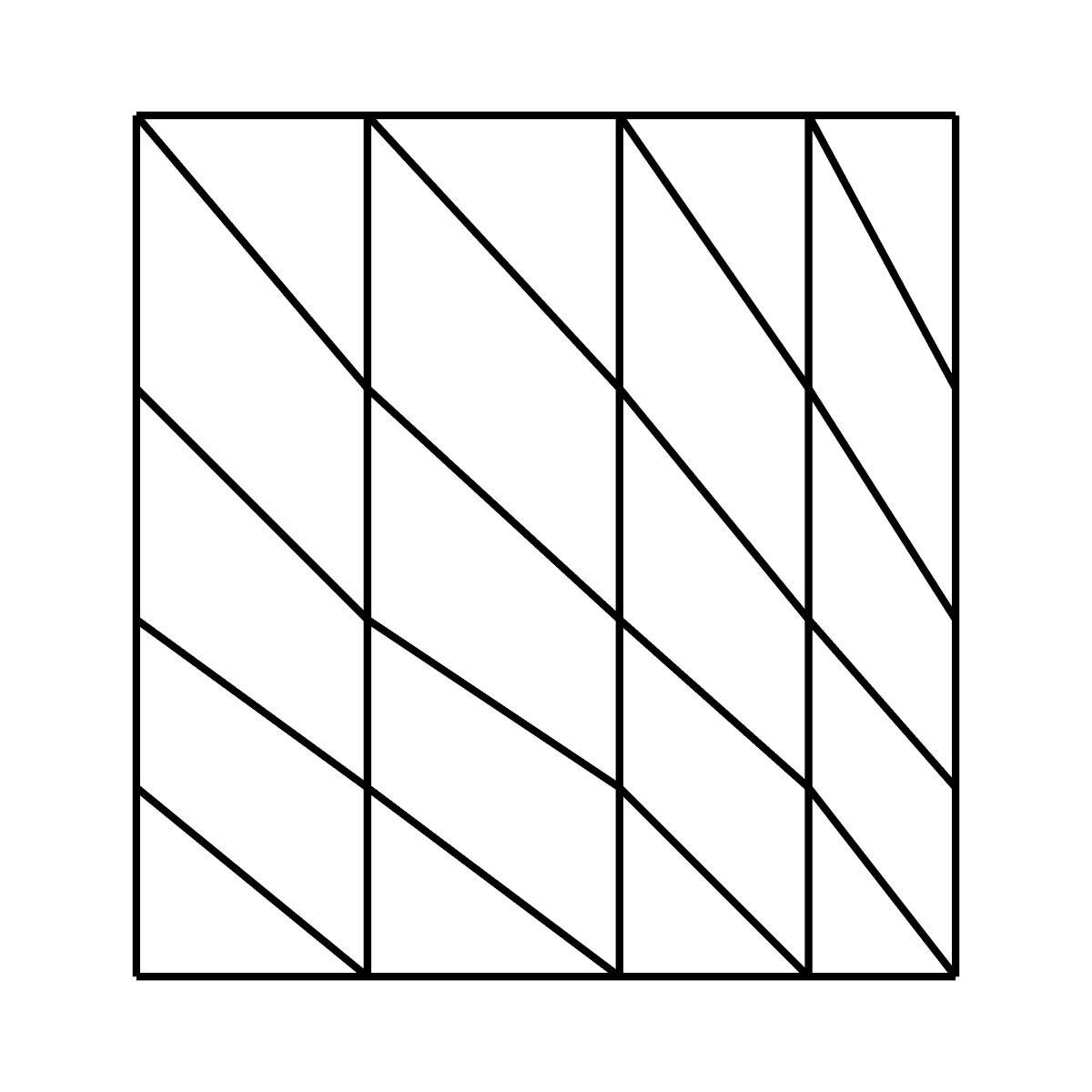}
        \caption{$\mathcal{T}^H_h$}
        \label{fig: 2.3.1}
    \end{subfigure}
    \begin{subfigure}[h]{0.3\textwidth}
        \centering
        \includegraphics{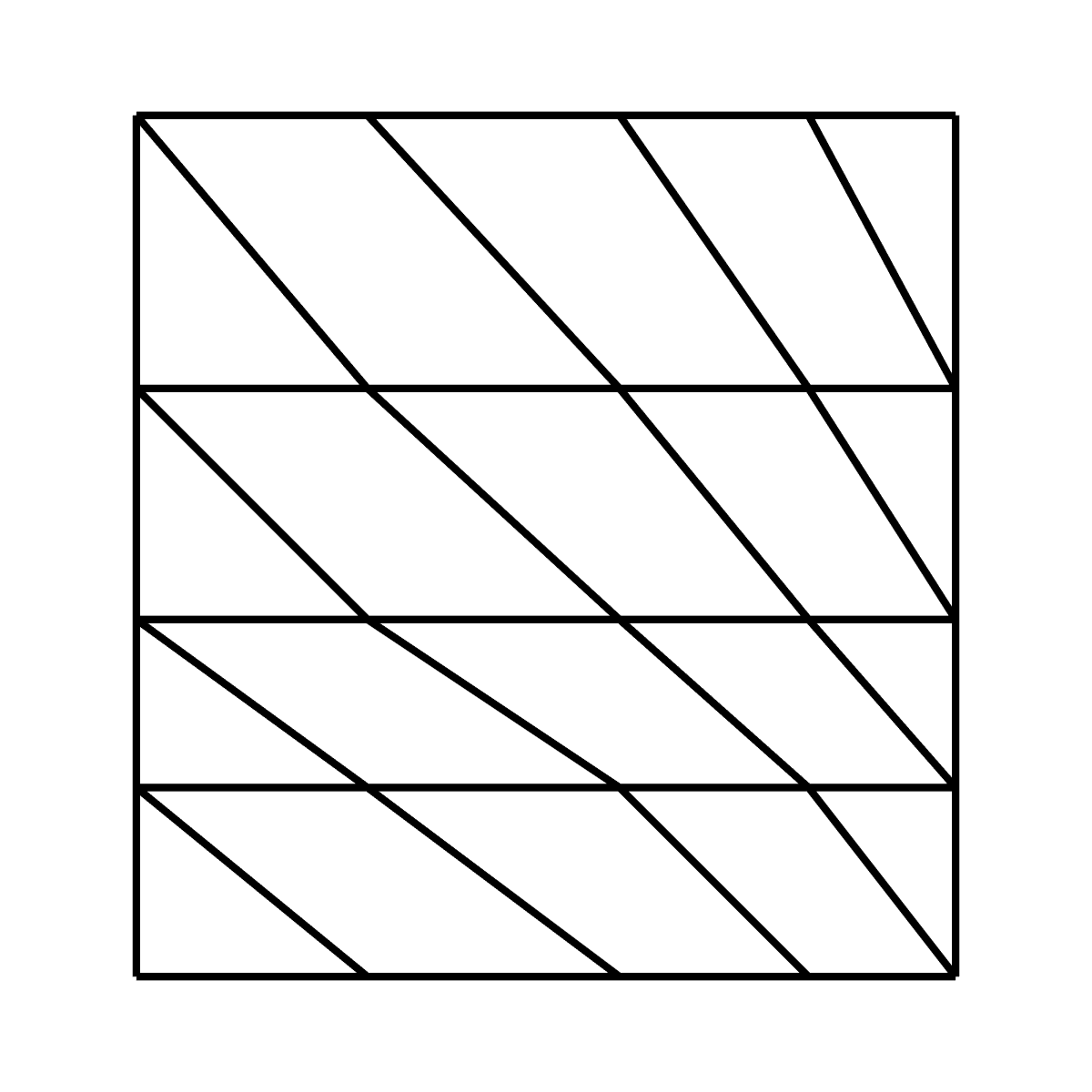}
        \caption{$\mathcal{T}^V_h$}
        \label{fig: 2.3.2}
    \end{subfigure}
    \begin{subfigure}[h]{0.3\textwidth}
        \centering
        \includegraphics{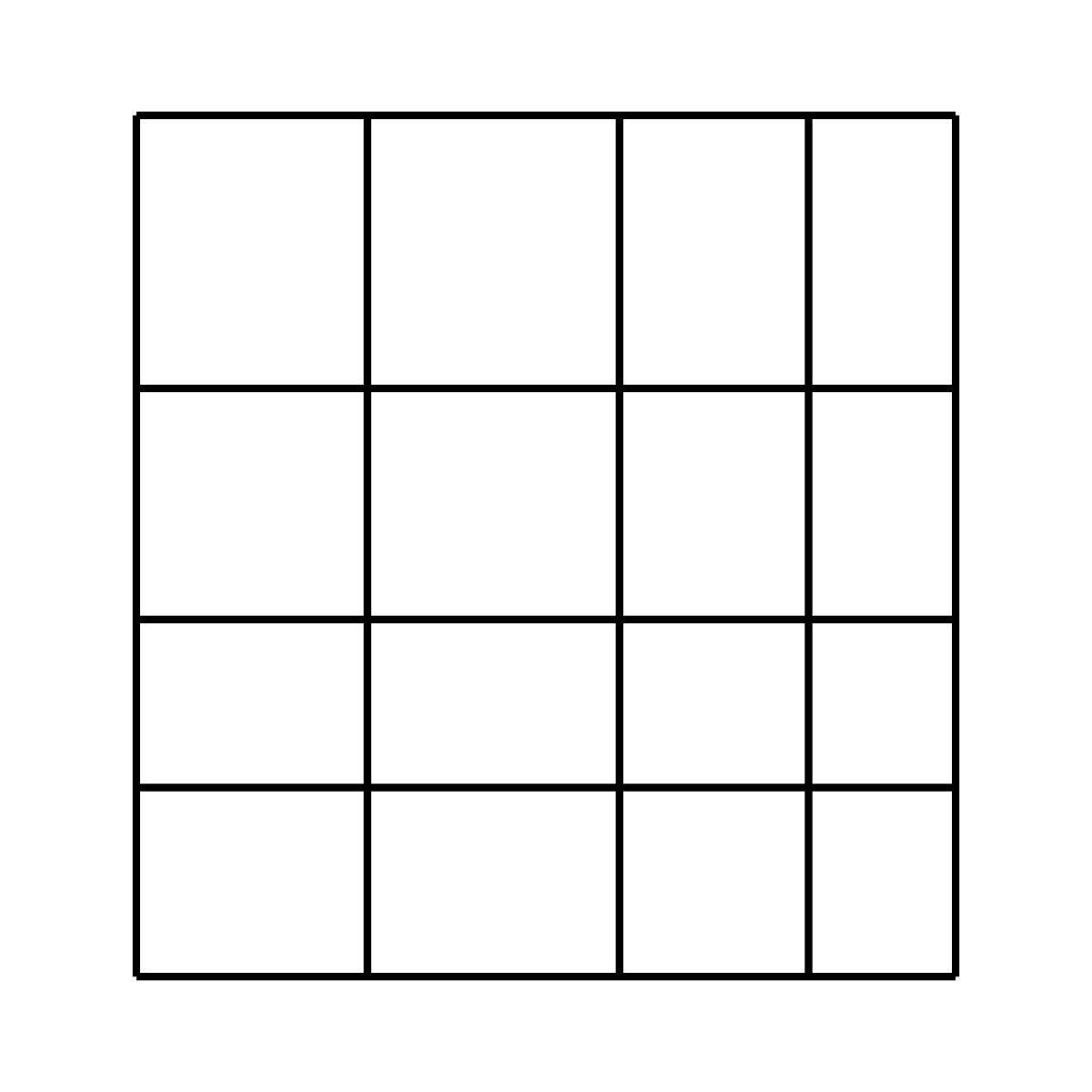}
        \caption{$\mathcal{T}^D_h$}
        \label{fig: 2.3.3}
    \end{subfigure}
    \caption{Staggered quadrilateral meshes.}
    \label{fig: 2.3}
\end{figure}

\begin{remark}
    We adopt uniformly oriented diagonals in the triangulation of the Cartesian grid. The analysis in Section \ref{sec 6} indicates that this specification is necessary to achieve the second-order consistency of the discrete convection term.
\end{remark}
\begin{remark} \label{rmk: mesh regularity}
    The Cartesian grid considered in this work is not restricted to be uniform. All theoretical developments and numerical experiments are carried out for non-uniform grids under the regularity assumption:
    \begin{align}
        C_1 \leq \frac{h^y_{j^+}}{h^x_{i^+}} \leq C_2, \label{eq: mesh regularity}
    \end{align}
    where $C_1$ and $C_2$ are positive constants independent of $h$.
\end{remark}
\begin{remark}
    This work is developed on two-dimensional Cartesian grids. Its extension to three dimensions is not a direct transfer and requires a new mesh design. A promising route is the Kuhn-Freudenthal subdivision, which splits each cube into six tetrahedra symmetrically along the three coordinate directions; a suitable grouping of these tetrahedra may yield a staggered structure analogous to the diagonal splitting used here. We leave this for future work.
\end{remark}

Throughout the paper, we write $A \lesssim B$ (resp. $A \gtrsim B$) to mean that $A \leq CB$ (resp. $A \geq CB$) for a positive constant $C$ independent of the mesh size $h$ and the viscosity $\nu$ but possibly depending on the grid-regularity constants $C_1,C_2$ in Remark~\ref{rmk: mesh regularity}. Besides, we write $A=O(h^r)$ to represent $A \lesssim h^r$.

\section{SDG$_0$ Method for Stokes Equations} \label{sec 3}

In this section, we develop a SDG$_0$ discretization for Stokes equations on the staggered meshes introduced in the previous section, and establish its fundamental properties.

Consider the Stokes equations:
\begin{align*}
    - \nu \Delta \mathbf{u} + \nabla p &= \mathbf{f}, \\
    \nabla \cdot \mathbf{u} &= 0,
\end{align*}
with conditions
\begin{align}
    \mathbf{u} |_{\partial \Omega} &= 0, \label{eq: Stokes condition 1} \\
    \int_{\Omega} p \, dx \, dy &= 0, \label{eq: Stokes condition 2}
\end{align}
where $\mathbf{u} = (u^x, u^y)^{\mathsf{T}}$ is the velocity vector, $p$ is the pressure, $\nu$ is the kinematic viscosity and $\mathbf{f}$ is the body force.
Let $\boldsymbol{\sigma}$ denote the velocity gradient tensor, i.e., $\boldsymbol{\sigma} = \begin{pmatrix} \sigma^{x, x} & \sigma^{x, y} \\ \sigma^{y, x} & \sigma^{y, y} \end{pmatrix} = \begin{pmatrix} \frac{\partial u^x}{\partial x} & \frac{\partial u^x}{\partial y} \\ \frac{\partial u^y}{\partial x} & \frac{\partial u^y}{\partial y} \end{pmatrix}$. Then we can rewrite the Stokes equations into the following mixed formulation:
\begin{align}
    \boldsymbol{\sigma} - \nabla \mathbf{u} &= 0, \label{eq: Stokes 2.1} \\
    - \nu \nabla \cdot \boldsymbol{\sigma} + \nabla p &= \mathbf{f}, \label{eq: Stokes 2.2} \\
    \nabla \cdot \mathbf{u} &= 0. \label{eq: Stokes 2.3}
\end{align}

Let $L^2 (\Omega)$, $H^m (\Omega)$ and $W^m_\infty (\Omega)$ denote the standard Lebesgue space, the Sobolev space of order $m$, and the Sobolev space with bounded derivatives up to order $m$, respectively. Their norms are denoted by $\| \cdot \|_{L^2}$, $\| \cdot \|_{H^m}$ and $\| \cdot \|_{W^m_\infty}$. Let $(\cdot, \cdot)$ denote the $L^2$ inner product. Throughout the paper, we assume that the exact solution $(\mathbf{u}, p)$ possesses sufficient regularity as measured by the aforementioned norms.

Define zero-order finite-element spaces with staggered continuity:
\begin{align*}
    \Sigma_h &= \{ \boldsymbol{\tau}_h : \boldsymbol{\tau}_h |_T \in \mathcal{P}_0 (T)^{2 \times 2}, \ \forall T \in \mathcal{T}_h; \ \llbracket \boldsymbol{\tau}_h \mathbf{n} \cdot \mathbf{t} \rrbracket_{|e} = 0, \ \forall e \in \mathring{\mathcal{E}}^H_h \cup \mathring{\mathcal{E}}^V_h; \ \llbracket \boldsymbol{\tau}_h \mathbf{n} \rrbracket_{|e} = 0, \ \forall e \in \mathcal{E}^D_h \}, \\
    U_h &= \{ \mathbf{v}_h : \mathbf{v}_h |_T \in \mathcal{P}_0 (T)^2, \ \forall T \in \mathcal{T}_h; \ \llbracket \mathbf{v}_h \cdot \mathbf{n} \rrbracket_{|e} = 0, \ \forall e \in \mathring{\mathcal{E}}^H_h \cup \mathring{\mathcal{E}}^V_h \}, \\
    P_h &= \{ q_h : q_h |_T \in \mathcal{P}_0 (T), \ \forall T \in \mathcal{T}_h; \ \llbracket q_h \rrbracket_{|e} = 0, \ \forall e \in \mathcal{E}^D_h \},
\end{align*}
where $\mathcal{P}_0 (T)$ is the space of constant functions on the element $T$. Impose the conditions \eqref{eq: Stokes condition 1}--\eqref{eq: Stokes condition 2} on the spaces:
\begin{align*}
    U_{h, 0} &= \{ \mathbf{v}_h \in U_h : \mathbf{v}_h \cdot \mathbf{n} = 0 \text{ on } \partial \Omega \}, \\
    P_{h, 0} &= \{ q_h \in P_h : \int_{\Omega} q_h \, dx \, dy = 0 \}.
\end{align*}
\textbf{SDG$_0$ Scheme}: find $\boldsymbol{\sigma}_h \in \Sigma_h$, $\mathbf{u}_h \in U_{h, 0}$ and $p_h \in P_{h, 0}$ such that
\begin{align}
    (\boldsymbol{\sigma}_h, \boldsymbol{\tau}_h) - B^*_h (\mathbf{u}_h, \boldsymbol{\tau}_h) &= 0, & &\forall \boldsymbol{\tau}_h \in \Sigma_h, \label{eq: SDG 1} \\
    \nu B_h (\boldsymbol{\sigma}_h, \mathbf{v}_h) - b^*_h (p_h, \mathbf{v}_h) &= (\mathbf{f}, \mathbf{v}_h), & &\forall \mathbf{v}_h \in U_{h, 0}, \label{eq: SDG 2} \\
    b_h (\mathbf{u}_h, q_h) &= 0, & &\forall q_h \in P_{h, 0}, \label{eq: SDG 3}
\end{align}
where
\begin{align*}
    B^*_h (\mathbf{v}_h, \boldsymbol{\tau}_h) &= \sum_{e \in \mathcal{E}^H_h \cup \mathcal{E}^V_h} \int_e \mathbf{v}_h \cdot \mathbf{n} \llbracket \boldsymbol{\tau}_h \mathbf{n} \cdot \mathbf{n} \rrbracket \, ds, \\
    B_h (\boldsymbol{\tau}_h, \mathbf{v}_h) &= - \sum_{e \in \mathcal{E}^H_h \cup \mathcal{E}^V_h} \int_e \boldsymbol{\tau}_h \mathbf{n} \cdot \mathbf{t} \llbracket \mathbf{v}_h \cdot \mathbf{t} \rrbracket \, ds - \sum_{e \in \mathcal{E}^D_h} \int_e \boldsymbol{\tau}_h \mathbf{n} \cdot \llbracket \mathbf{v}_h \rrbracket \, ds, \\
    b^*_h (q_h, \mathbf{v}_h) &= - \sum_{e \in \mathcal{E}^D_h} \int_e q_h \llbracket \mathbf{v}_h \cdot \mathbf{n} \rrbracket \, ds, \\
    b_h (\mathbf{v}_h, q_h) &= \sum_{e \in \mathcal{E}^H_h \cup \mathcal{E}^V_h} \int_e \mathbf{v}_h \cdot \mathbf{n} \llbracket q_h \rrbracket \, ds.
\end{align*}
\begin{remark}
    For the boundary condition \eqref{eq: Stokes condition 1}, we only impose the normal component of velocity to be zero on the boundary, while the tangential component is weakly enforced through the bilinear forms.
\end{remark}
\begin{remark}
    To highlight the novelty of our scheme, we compare it with the classical SDG method \cite{kim2013staggered, zhao2019staggered} at the lowest order. Figure~\ref{fig: 3.1} shows the distribution of the degrees of freedom on a single square block, from which two essential differences can be seen. First, regarding the mesh, the classical SDG method connects the centroid of each square to its four vertices resulting in a composite mesh of four sub-triangles, whereas our scheme uses a single diagonal splitting and thus produces only two triangles. Second, regarding the staggered continuity, the velocity, velocity gradient, and pressure are placed at different locations with different continuity configurations.
    \begin{figure}[htbp]
        \centering
        \begin{subfigure}[h]{0.48\textwidth}
            \centering
            \includegraphics{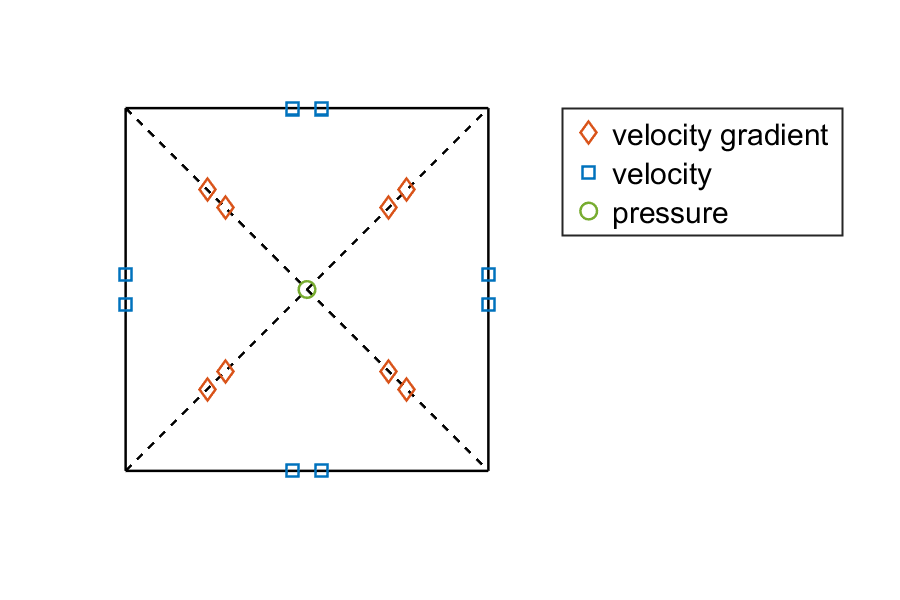}
            \caption{Classical SDG}
            \label{fig: 3.1.1}
        \end{subfigure}
        \begin{subfigure}[h]{0.48\textwidth}
            \centering
            \includegraphics{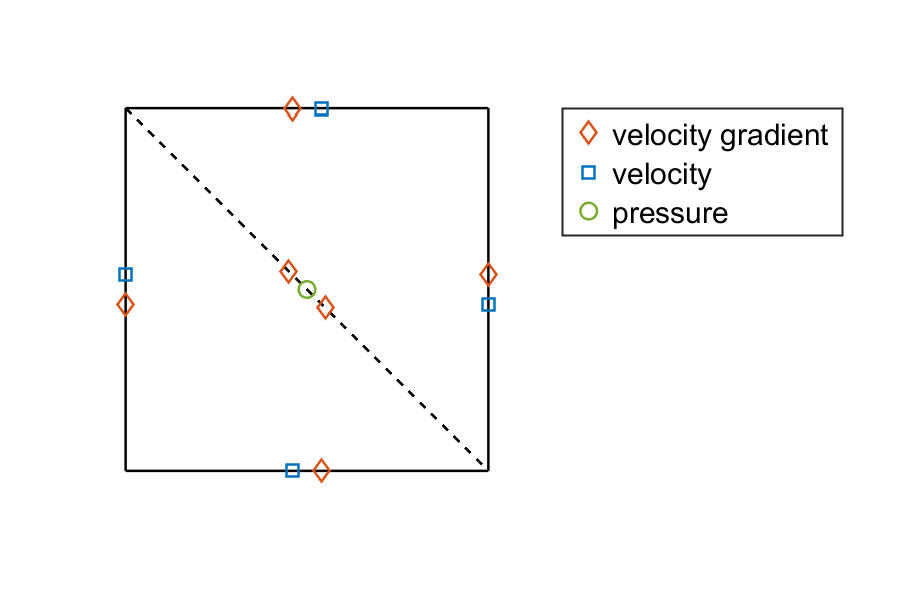}
            \caption{Present scheme}
            \label{fig: 3.1.2}
        \end{subfigure}
        \caption{Distribution of degrees of freedom}
        \label{fig: 3.1}
    \end{figure}
\end{remark}

By applying integration by parts, we can derive the bilinear forms are consistent:
\begin{align*}
    (\nabla \mathbf{u}, \boldsymbol{\tau}_h) &= B^*_h (\mathbf{u}, \boldsymbol{\tau}_h), & &\forall \boldsymbol{\tau}_h \in \Sigma_h, \\
    - (\nabla \cdot \boldsymbol{\sigma}, \mathbf{v}_h) &= B_h (\boldsymbol{\sigma}, \mathbf{v}_h), & &\forall \mathbf{v}_h \in U_{h, 0}, \\
    - (\nabla p, \mathbf{v}_h) &= b^*_h (p, \mathbf{v}_h), & &\forall \mathbf{v}_h \in U_{h, 0}, \\
    (\nabla \cdot \mathbf{u}, q_h) &= b_h (\mathbf{u}, q_h), & &\forall q_h \in P_h,
\end{align*}
and adjoint:
\begin{align*}
     B_h (\boldsymbol{\tau}_h, \mathbf{v}_h) &= B^*_h (\mathbf{v}_h, \boldsymbol{\tau}_h), & &\forall \boldsymbol{\tau}_h \in \Sigma_h, \ \forall \mathbf{v}_h \in U_h, \\
     b_h (\mathbf{v}_h, q_h) &= b^*_h (q_h, \mathbf{v}_h), & &\forall \mathbf{v}_h \in U_h, \ \forall q_h \in P_h.
\end{align*}
It follows the consistency of the $\mathrm{SDG_0}$ scheme, i.e.,
\begin{align*}
    (\boldsymbol{\sigma}, \boldsymbol{\tau}_h) - B^*_h (\mathbf{u}, \boldsymbol{\tau}_h) &= 0, & &\forall \boldsymbol{\tau}_h \in \Sigma_h, \\
    \nu B_h (\boldsymbol{\sigma}, \mathbf{v}_h) - b^*_h (p, \mathbf{v}_h) &= (\mathbf{f}, \mathbf{v}_h), & &\forall \mathbf{v}_h \in U_{h, 0}, \\
    b_h (\mathbf{u}, q_h) &= 0, & &\forall q_h \in P_{h, 0}.
\end{align*}

Define SDG norms: for $\boldsymbol{\tau}_h \in \Sigma_h$ and $\mathbf{v}_h \in U_h$,
\begin{align*}
    \| \boldsymbol{\tau}_h \|^2_{L^2, h} &= \sum_{e \in \mathcal{E}^H_h \cup \mathcal{E}^V_h} h_e \| \boldsymbol{\tau}_h \mathbf{n} \cdot \mathbf{t} \|^2_{L^2(e)} + \sum_{e \in \mathcal{E}^D_h} h_e \| \boldsymbol{\tau}_h \mathbf{n} \|^2_{L^2(e)}, \\
    \| \mathbf{v}_h \|^2_{H^1, h} &= \sum_{e \in \mathcal{E}^H_h \cup \mathcal{E}^V_h} h^{-1}_e \| \llbracket \mathbf{v}_h \cdot \mathbf{t} \rrbracket \|^2_{L^2(e)} + \sum_{e \in \mathcal{E}^D_h} h^{-1}_e \| \llbracket \mathbf{v}_h \rrbracket \|^2_{L^2(e)}.
\end{align*}
Then we have $B_h (\boldsymbol{\tau}_h, \mathbf{v}_h) \lesssim \| \boldsymbol{\tau}_h \|_{L^2, h} \| \mathbf{v}_h \|_{H^1, h}$. Scaling argument \cite{zhao2020newa} implies $\| \cdot \|_{L^2, h}$ is equivalent to $\| \cdot \|_{L^2}$, i.e.,
\begin{align}
    \| \boldsymbol{\tau}_h \|_{L^2} \lesssim \| \boldsymbol{\tau}_h \|_{L^2, h} \lesssim \| \boldsymbol{\tau}_h \|_{L^2}, \quad \forall \boldsymbol{\tau}_h \in \Sigma_h. \label{eq: norm equivalent}
\end{align}

Define the degrees of freedom (DoF) for $\Sigma_h$:
\begin{align*}
    \Phi^\Sigma_e (\boldsymbol{\tau}) &= \int_e \boldsymbol{\tau} \mathbf{n} \cdot \mathbf{t} \, ds, & &\forall e \in \mathcal{E}^H_h \cup \mathcal{E}^V_h, \\
    \Phi^{\Sigma, x}_e (\boldsymbol{\tau}) &= \int_e \tau^{n, x} \, ds, & &\forall e \in \mathcal{E}^D_h, \\
    \Phi^{\Sigma, y}_e (\boldsymbol{\tau}) &= \int_e \tau^{n, y} \, ds, & &\forall e \in \mathcal{E}^D_h,
\end{align*}
where $\tau^{n, x}$ and $\tau^{n, y}$ denote the components of $\boldsymbol{\tau}\mathbf{n}$ in the $x$- and $y$-directions, respectively, i.e.,
\begin{align*}
    \tau^{n, x} = n^x \tau^{x, x} + n^y \tau^{x, y}, \quad \tau^{n, y} = n^x \tau^{y, x} + n^y \tau^{y, y}.
\end{align*}
The degrees of freedom for $U_h$ and $P_h$ are defined as follows:
\begin{align*}
    \Phi^U_e (\mathbf{v}) &= \int_e \mathbf{v} \cdot \mathbf{n} \, ds, & &\forall e \in \mathcal{E}^H_h \cup \mathcal{E}^V_h, \\
    \Phi^P_e (q) &= \int_e q \, ds, & &\forall e \in \mathcal{E}^D_h.
\end{align*}
Let $\Pi^\Sigma_h$, $\Pi^U_h$ and $\Pi^P_h$ denote the canonical interpolation operators (associated with the above DoFs) onto $\Sigma_h$, $U_h$ and $P_h$, respectively. It is evident that the interpolation operators and the bilinear forms are compatible, i.e.,
\begin{align*}
    B^*_h (\mathbf{u} - \Pi^U_h \mathbf{u}, \mathbf{\tau}_h) &= 0, & &\forall \boldsymbol{\tau}_h \in \Sigma_h, \\
    B_h (\boldsymbol{\sigma} - \Pi^\Sigma_h \boldsymbol{\sigma}, \mathbf{v}_h) &= 0, & &\forall \mathbf{v}_h \in U_h, \\
    b^*_h (p - \Pi^P_h p, \mathbf{v}_h) &= 0, & &\forall \mathbf{v}_h \in U_h, \\
    b_h (\mathbf{u} - \Pi^U_h \mathbf{u}, q_h) &= 0, & &\forall q_h \in P_h.
\end{align*}
For any $\mathbf{v}$ satisfying the boundary condition \eqref{eq: Stokes condition 1}, we have $\Pi^U_h \mathbf{v} \in U_{h, 0}$.

The discrete inf-sup conditions are established in the following lemmas.
\begin{lemma} \label{lem: inf-sup condition 1}
    There exists a positive constant $\beta$ independent of mesh size $h$ such that
    \begin{align*}
        \sup_{\boldsymbol{\tau}_h \in \Sigma_h} \frac{B_h (\boldsymbol{\tau}_h, \mathbf{v}_h)}{\| \boldsymbol{\tau}_h \|_{L^2}} \geq \beta \| \mathbf{v}_h \|_{H^1, h}, & &\forall \mathbf{v}_h \in U_h.
    \end{align*}
\end{lemma}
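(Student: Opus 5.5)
The plan is a direct construction. Given $\mathbf{v}_h \in U_h$, I will build an explicit $\boldsymbol{\tau}_h \in \Sigma_h$ by prescribing its degrees of freedom to match the jumps in $\| \mathbf{v}_h \|_{H^1, h}$. On each edge $e \in \mathcal{E}^H_h \cup \mathcal{E}^V_h$ I set
\begin{align*}
    \Phi^\Sigma_e (\boldsymbol{\tau}_h) = - \int_e h_e^{-1} \llbracket \mathbf{v}_h \cdot \mathbf{t} \rrbracket \, ds .
\end{align*}
On each diagonal edge $e \in \mathcal{E}^D_h$ I set
\begin{align*}
    \Phi^{\Sigma, x}_e (\boldsymbol{\tau}_h) = - \int_e h_e^{-1} \llbracket v^x_h \rrbracket \, ds, \qquad
    \Phi^{\Sigma, y}_e (\boldsymbol{\tau}_h) = - \int_e h_e^{-1} \llbracket v^y_h \rrbracket \, ds .
\end{align*}
All these quantities are constant on each edge, since $\mathbf{v}_h$ is piecewise constant. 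Boundary edges in $\mathcal{E}^H_h \cup \mathcal{E}^V_h$ are included, where the jump is interpreted as the one-sided trace; these DoFs exist in $\Sigma_h$ because the tangential continuity in $\Sigma_h$ is imposed only on interior edges.

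The first step is to check that these DoFs are unisolvent for $\Sigma_h$, so that $\boldsymbol{\tau}_h$ is well defined. Each triangle $T$ has one horizontal edge, one vertical edge, and one diagonal edge. A constant $2\times 2$ tensor on $T$ has four entries. On the horizontal edge, $\boldsymbol{\tau}\mathbf{n}\cdot\mathbf{t}$ fixes $\tau^{x,y}$. On the vertical edge it fixes $\tau^{y,x}$. On the diagonal, with $\mathbf{n} \propto (1,1)$, the two components of $\boldsymbol{\tau}\mathbf{n}$ fix $\tau^{x,x}+\tau^{x,y}$ and $\tau^{y,x}+\tau^{y,y}$. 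So the map from entries to DoFs is triangular and invertible, with entries bounded by the DoFs divided by the edge length up to constants depending on $C_1, C_2$. The continuity constraints in $\Sigma_h$ are exactly the statements that each DoF is single-valued on its edge, so the local reconstruction glues into a global element of $\Sigma_h$.

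With this choice, the definition of $B_h$ gives
\begin{align*}
    B_h(\boldsymbol{\tau}_h, \mathbf{v}_h) = \sum_{e \in \mathcal{E}^H_h \cup \mathcal{E}^V_h} h_e^{-1} \| \llbracket \mathbf{v}_h \cdot \mathbf{t} \rrbracket \|_{L^2(e)}^2 + \sum_{e \in \mathcal{E}^D_h} h_e^{-1} \| \llbracket \mathbf{v}_h \rrbracket \|_{L^2(e)}^2 = \| \mathbf{v}_h \|^2_{H^1, h},
\end{align*}
because the integrands are constants on each edge.

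Next I bound the norm of $\boldsymbol{\tau}_h$ via the $L^2,h$ norm. By construction $\boldsymbol{\tau}_h \mathbf{n}\cdot\mathbf{t} = -h_e^{-1}\llbracket \mathbf{v}_h\cdot\mathbf{t}\rrbracket$ on horizontal and vertical edges, and $\boldsymbol{\tau}_h \mathbf{n} = -h_e^{-1}\llbracket \mathbf{v}_h\rrbracket$ on diagonals. Hence
\begin{align*}
    \| \boldsymbol{\tau}_h \|_{L^2, h}^2 = \sum_e h_e \, h_e^{-2} \| \llbracket \cdot \rrbracket \|_{L^2(e)}^2 = \| \mathbf{v}_h \|_{H^1, h}^2 .
\end{align*}
By the norm equivalence \eqref{eq: norm equivalent}, $\| \boldsymbol{\tau}_h \|_{L^2} \lesssim \| \mathbf{v}_h \|_{H^1,h}$. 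Dividing the identity for $B_h(\boldsymbol{\tau}_h, \mathbf{v}_h)$ by this bound gives the claim, with $\beta$ the reciprocal of the equivalence constant.

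The main obstacle is the well-posedness and locality of the DoF reconstruction. One must verify that each triangle's three edges carry exactly the four DoFs above, correctly oriented, and that the equivalence constant in \eqref{eq: norm equivalent} is uniform under \eqref{eq: mesh regularity}. Both reduce to the small local $4\times4$ computation on a single triangle.
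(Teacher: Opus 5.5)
Your proposal is correct and is essentially the paper's proof: you prescribe the DoFs of $\boldsymbol{\tau}_h$ as $-h_e^{-1}\int_e$ of the relevant jumps, which gives $B_h(\boldsymbol{\tau}_h,\mathbf{v}_h)=\|\mathbf{v}_h\|^2_{H^1,h}$ and $\|\boldsymbol{\tau}_h\|_{L^2,h}=\|\mathbf{v}_h\|_{H^1,h}$, and then you invoke the norm equivalence \eqref{eq: norm equivalent}. One small correction to your unisolvence remark: on a non-uniform grid the diagonal normal is $(h^y_{j^+},h^x_{i^+})^{\mathsf{T}}/l_{i^+,j^+}$, not proportional to $(1,1)$, but since $n^x,n^y$ are bounded below under \eqref{eq: mesh regularity}, the same triangular inversion still goes through.
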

\begin{proof}
    It suffices to prove that for any $\mathbf{v}_h \in U_h$, there exists $\boldsymbol{\tau}_h \in \Sigma_h$ such that $B_h (\boldsymbol{\tau}_h, \mathbf{v}_h) = \| \mathbf{v}_h \|^2_{H^1, h}$ and $\| \boldsymbol{\tau}_h \|_{L^2} \lesssim \| \mathbf{v}_h \|_{H^1, h}$. According to the DoFs of $\Sigma_h$, we can construct $\boldsymbol{\tau}_h$ such that
    \begin{align*}
        \int_e \boldsymbol{\tau}_h \mathbf{n} \cdot \mathbf{t} \, ds &= -h^{-1}_e \int_e \llbracket \mathbf{v}_h \cdot \mathbf{t} \rrbracket \, ds, & &\forall e \in \mathcal{E}^H_h \cup \mathcal{E}^V_h, \\
        \int_e \boldsymbol{\tau}_h \mathbf{n} \, ds &= -h^{-1}_e \int_e \llbracket \mathbf{v}_h \rrbracket \, ds, & &\forall e \in \mathcal{E}^D_h.
    \end{align*}
    It directly follows that $B_h (\boldsymbol{\tau}_h, \mathbf{v}_h) = \| \mathbf{v}_h \|^2_{H^1, h}$ and $\| \boldsymbol{\tau}_h \|_{L^2, h} = \| \mathbf{v}_h \|_{H^1, h}$. Then applying the norm equivalence \eqref{eq: norm equivalent} completes the proof.
\end{proof}
\begin{lemma} \label{lem: inf-sup condition 2}
    There exists a positive constant $\beta$ independent of mesh size $h$ such that
    \begin{align*}
        \sup_{\mathbf{v}_h \in U_{h, 0}} \frac{b_h (\mathbf{v}_h, q_h)}{\| \mathbf{v}_h \|_{H^1, h}} \geq \beta \| q_h \|_{L^2}, & &\forall q_h \in P_{h, 0}.
    \end{align*}
\end{lemma}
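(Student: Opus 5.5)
We would follow the standard Fortin-type argument: first borrow a continuous right inverse of the divergence, then transfer it to the discrete level through the interpolation operator $\Pi^U_h$. Fix $q_h \in P_{h,0}$. Since $\int_\Omega q_h = 0$, the classical continuous inf-sup result gives $\mathbf{w} \in H^1_0(\Omega)^2$ with $\nabla \cdot \mathbf{w} = q_h$ and $\|\mathbf{w}\|_{H^1} \lesssim \|q_h\|_{L^2}$. We then set $\mathbf{v}_h = \Pi^U_h \mathbf{w}$. The DoFs $\int_e \mathbf{w}\cdot\mathbf{n}\,ds$ are well defined for $H^1$ functions by the trace theorem, and $\mathbf{v}_h \in U_{h,0}$ because $\mathbf{w}$ vanishes on $\partial\Omega$.

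The numerator is handled by the compatibility of interpolants and forms listed above together with consistency. Compatibility gives $b_h(\mathbf{w} - \Pi^U_h\mathbf{w}, q_h) = 0$; this is an edge-wise identity, so it holds for $\mathbf{w}\in H^1$. Consistency gives $b_h(\mathbf{w}, q_h) = (\nabla\cdot\mathbf{w}, q_h)$, by elementwise integration by parts on the quadrilaterals of $\mathcal{T}^D_h$ where $q_h$ is constant. Combining the two,
\begin{align*}
    b_h(\mathbf{v}_h, q_h) = (\nabla\cdot\mathbf{w}, q_h) = \|q_h\|^2_{L^2}.
\end{align*}

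The remaining task is the stability bound $\|\Pi^U_h \mathbf{w}\|_{H^1,h} \lesssim \|\mathbf{w}\|_{H^1}$, which we expect to be the main obstacle. The norm involves two kinds of edges.
\begin{itemize}
    \item On horizontal and vertical edges we need the tangential jump $\llbracket \mathbf{v}_h\cdot\mathbf{t}\rrbracket$.
    \item On diagonal edges we need the full jump $\llbracket \mathbf{v}_h\rrbracket$.
\end{itemize}

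We plan to control both by writing $\llbracket \Pi^U_h\mathbf{w}\rrbracket = \llbracket \Pi^U_h\mathbf{w} - \mathbf{w}\rrbracket$, which is valid since $\mathbf{w}$ is continuous in the trace sense. On each triangle $T$, the constant $\Pi^U_h\mathbf{w}|_T$ is determined by the normal fluxes through its horizontal and vertical edges. These two normals are linearly independent, and the resulting $2\times2$ system is uniformly well conditioned under the mesh regularity assumption \eqref{eq: mesh regularity}. Hence $\Pi^U_h$ reproduces constants on $T$ and is bounded: $\|(\Pi^U_h\mathbf{w})|_T\|_{L^\infty} \lesssim h^{-1}\sum_{e\subset\partial T}\|\mathbf{w}\|_{L^1(e)}$.

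A Bramble--Hilbert argument combined with the scaled trace inequality then gives
\begin{align*}
    h_e^{-1}\|(\Pi^U_h\mathbf{w}-\mathbf{w})|_T\|^2_{L^2(e)} \lesssim h^{-2}\|\mathbf{w}-\Pi^U_h\mathbf{w}\|^2_{L^2(T)} + |\mathbf{w}|^2_{H^1(T)} \lesssim |\mathbf{w}|^2_{H^1(T)}.
\end{align*}
Summing over the two triangles adjacent to each edge, and using that each triangle touches a bounded number of edges, yields $\|\mathbf{v}_h\|_{H^1,h} \lesssim |\mathbf{w}|_{H^1} \lesssim \|q_h\|_{L^2}$.

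Dividing then gives $b_h(\mathbf{v}_h,q_h)/\|\mathbf{v}_h\|_{H^1,h} \gtrsim \|q_h\|_{L^2}$, which is the claim with a $\beta$ depending only on $\Omega$ and on $C_1, C_2$. The delicate point is the unisolvence and uniform conditioning of the local interpolant on each triangle, together with the treatment of boundary edges. For boundary edges, the tangential jump reduces to the one-sided trace $\mathbf{v}_h\cdot\mathbf{t}$, and since $\mathbf{w}=0$ on $\partial\Omega$ the same estimate applies.
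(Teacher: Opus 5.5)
Your proof is correct. The setup matches the paper's: the continuous inf-sup (right inverse of the divergence), the Fortin choice $\mathbf{v}_h=\Pi^U_h\mathbf{w}$, and $b_h(\Pi^U_h\mathbf{w},q_h)=(\nabla\cdot\mathbf{w},q_h)$ via compatibility and consistency. Where you genuinely differ is the stability bound $\|\Pi^U_h\mathbf{w}\|_{H^1,h}\lesssim\|\mathbf{w}\|_{H^1}$.

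The paper obtains this bound by duality, with no local approximation theory. It applies Lemma~\ref{lem: inf-sup condition 1} to get
\begin{align*}
\|\Pi^U_h\mathbf{w}\|_{H^1,h}\lesssim\sup_{\boldsymbol{\tau}_h\in\Sigma_h}\frac{B_h(\boldsymbol{\tau}_h,\Pi^U_h\mathbf{w})}{\|\boldsymbol{\tau}_h\|_{L^2}}.
\end{align*}
It then uses adjointness, the commuting property $B^*_h(\mathbf{w}-\Pi^U_h\mathbf{w},\boldsymbol{\tau}_h)=0$, and consistency to turn the numerator into $(\nabla\mathbf{w},\boldsymbol{\tau}_h)$, which Cauchy--Schwarz bounds by $|\mathbf{w}|_{H^1}\|\boldsymbol{\tau}_h\|_{L^2}$.

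You instead bound the jumps directly. You write $\llbracket\Pi^U_h\mathbf{w}\rrbracket=\llbracket\Pi^U_h\mathbf{w}-\mathbf{w}\rrbracket$, apply a Bramble--Hilbert estimate to the elementwise, constant-preserving interpolant, and finish with a scaled trace inequality.

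What each route buys:
\begin{itemize}
\item The paper's argument is shorter and recycles structure it has already built. That structure is Lemma~\ref{lem: inf-sup condition 1} and the commuting diagram, which themselves rest on the norm equivalence \eqref{eq: norm equivalent}.
\item Your argument is local and self-contained and does not need Lemma~\ref{lem: inf-sup condition 1}. It would carry over to any locally defined interpolant that reproduces constants, even without an SDG-type commuting property. The price is standard interpolation and trace machinery and explicit use of shape regularity of the triangles, which follows from \eqref{eq: mesh regularity}.
\end{itemize}

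One minor correction: the local $2\times 2$ system is not merely well conditioned but diagonal. On $T^-_{i^+,j^+}$, $v^x$ is the average of $w^x$ over the vertical edge $e_{i,j^+}$, and $v^y$ is the average of $w^y$ over the horizontal edge $e_{i^+,j}$. So unisolvence needs no mesh assumption; mesh regularity enters only through the trace and Bramble--Hilbert constants.
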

\begin{proof}
    It is well-known that for the continuous spaces $H^1_0 (\Omega)^2 = \{ \mathbf{v} \in H^1 (\Omega)^2 : \mathbf{v} |_{\partial \Omega} = 0 \}$ and $L^2_0 (\Omega) = \{ q \in L^2(\Omega) : \int_{\Omega} q \, dx \, dy = 0 \}$, it holds that
    \begin{align*}
        \sup_{\mathbf{v} \in H^1_0 (\Omega)^2} \frac{(\nabla \cdot \mathbf{v}, q)}{\| \mathbf{v} \|_{H^1}} \geq \beta' \| q \|_{L^2}, \quad \forall q \in L^2_0 (\Omega),
    \end{align*}
    where $\beta'$ is a $h$-independent positive constant. Given $q_h \in P_{h, 0} \subset L^2_0 (\Omega)$, for any $\mathbf{v} \in H^1_0 (\Omega)^2$, we take $\mathbf{v}_h = \Pi^U_h \mathbf{v} \in U_{h, 0}$. By the properties of the projection operator,
    \begin{align*}
        b_h (\mathbf{v}_h, q_h) = b_h (\Pi^U_h \mathbf{v}, q_h) = b_h (\mathbf{v}, q_h) = (\nabla \cdot \mathbf{v}, q_h).
    \end{align*}
    To establish the inf-sup condition, we only need to bound $\| \mathbf{v}_h \|_{H^1, h}$ in terms of $\| \mathbf{v} \|_{H^1}$. By Lemma \ref{lem: inf-sup condition 1},
    \begin{align*}
        \| \mathbf{v}_h \|_{H^1, h} \lesssim \sup_{\boldsymbol{\tau}_h \in \Sigma_h} \frac{B_h (\boldsymbol{\tau}_h, \mathbf{v}_h)}{\| \boldsymbol{\tau}_h \|_{L^2}}.
    \end{align*}
    By the adjoint and consistent property of the bilinear form,
    \begin{align*}
        B_h (\boldsymbol{\tau}_h, \mathbf{v}_h) = B_h (\boldsymbol{\tau}_h, \Pi^U_h \mathbf{v}) = B^*_h (\Pi^U_h \mathbf{v}, \boldsymbol{\tau}_h) = B^*_h (\mathbf{v}, \boldsymbol{\tau}_h) = (\nabla \mathbf{v}, \boldsymbol{\tau}_h).
    \end{align*}
    Therefore,
    \begin{align*}
        \| \mathbf{v}_h \|_{H^1, h} \lesssim \sup_{\boldsymbol{\tau}_h \in \Sigma_h} \frac{(\nabla \mathbf{v}, \boldsymbol{\tau}_h)}{\| \boldsymbol{\tau}_h \|_{L^2}} \lesssim \| \mathbf{v} \|_{H^1},
    \end{align*}
    which completes the proof.
\end{proof}

\section{Pointwise Formulation and Local Static Condensation} \label{sec 4}

In this section, we derive a explicit pointwise formulation of the SDG$_0$ scheme and apply local static condensation to eliminate the diagonal entries of the velocity gradient.

Specify the direction of the normal and tangential unit vectors on edges as follows:
\begin{align*}
    \mathbf{n} &= (0, 1)^{\mathsf{T}}, \quad \mathbf{t} = (1, 0)^{\mathsf{T}}, & &\text{on } e_{i^+, j}, \\
    \mathbf{n} &= (1, 0)^{\mathsf{T}}, \quad \mathbf{t} = (0, 1)^{\mathsf{T}}, & &\text{on } e_{i, j^+}, \\
    \mathbf{n} &= (n^x_{i^+, j^+}, n^y_{i^+, j^+})^{\mathsf{T}}, & &\text{on } e_{i^+, j^+},
\end{align*}
where
\begin{align*}
    n^x_{i^+, j^+} = \frac{h^y_{j^+}}{l_{i^+, j^+}}, \quad 
    n^y_{i^+, j^+} = \frac{h^x_{i^+}}{l_{i^+, j^+}}.
\end{align*}

The finite element spaces can be explicitly characterized as follows:
\begin{align*}
    \Sigma_h &= \{ \boldsymbol{\tau}_h : \tau^{x, y}_h |_T \in \mathcal{P}_0 (T), \ \forall T \in \mathcal{T}^H_h; \ \tau^{y, x}_h |_T \in \mathcal{P}_0 (T), \ \forall T \in \mathcal{T}^V_h; \\ &\quad \ \tau^{n, x}_h |_T, \tau^{n, y}_h |_T \in \mathcal{P}_0 (T), \ \forall T \in \mathcal{T}^D_h \}, \\
    U_h &= \{ \mathbf{v}_h : v^x_h |_T \in \mathcal{P}_0 (T), \ \forall T \in \mathcal{T}^V_h; \ v^y_h |_T \in \mathcal{P}_0 (T), \ \forall T \in \mathcal{T}^H_h \}, \\
    P_h &= \{ q_h : q_h |_T \in \mathcal{P}_0 (T), \ \forall T \in \mathcal{T}^D_h \},
\end{align*}
where we specify $\mathbf{n} = (n^x_{i^+, j^+}, n^y_{i^+, j^+})^{\mathsf{T}}$ for $\tau^{n, x}_h, \tau^{n, y}_h$ in $T_{i^+, j^+}$.
For $\boldsymbol{\tau}_h \in \Sigma_h$, we denote its constant components as follows:
\begin{align*}
    \tau^{x, y}_h \equiv \tau^{x, y}_{i^+, j} \text{ in } T_{i^+, j}, \quad
    \tau^{n, x}_h \equiv \tau^{n, x}_{i^+, j^+} \text{ in } T_{i^+, j^+}, \quad
    \tau^{y, x}_h \equiv \tau^{y, x}_{i, j^+} \text{ in } T_{i, j^+}, \quad
    \tau^{n, y}_h \equiv \tau^{n, y}_{i^+, j^+} \text{ in } T_{i^+, j^+}.
\end{align*}
Then, $\tau^{x, x}_h$ and $\tau^{y, y}_h$ can be expressed as
\begin{align} \label{eq: velocity gradient relation}
    \tau^{x, x}_h =
    \begin{cases}
        \frac{\tau^{n, x}_{i^+, j^+} - n^y_{i^+, j^+} \tau^{x, y}_{i^+, j}}{n^x_{i^+, j^+}} & \text{in } T^-_{i^+, j^+}, \\
        \frac{\tau^{n, x}_{i^+, j^+} - n^y_{i^+, j^+} \tau^{x, y}_{i^+, j + 1}}{n^x_{i^+, j^+}} & \text{in } T^+_{i^+, j^+},
    \end{cases} \quad
    \tau^{y, y}_h =
    \begin{cases}
        \frac{\tau^{n, y}_{i^+, j^+} - n^x_{i^+, j^+} \tau^{y, x}_{i, j^+}}{n^y_{i^+, j^+}} & \text{in } T^-_{i^+, j^+}, \\
        \frac{\tau^{n, y}_{i^+, j^+} - n^x_{i^+, j^+} \tau^{y, x}_{i + 1, j^+}}{n^y_{i^+, j^+}} & \text{in } T^+_{i^+, j^+}.
    \end{cases}
\end{align}
The constant components of $\mathbf{v}_h \in U_h$ and $q_h \in P_h$ are denoted as follows:
\begin{gather*}
    v^x_h \equiv v^x_{i, j^+} \text{ in } T_{i, j^+}, \quad
    v^y_h \equiv v^y_{i^+, j} \text{ in } T_{i^+, j}, \\
    q_h \equiv q_{i^+, j^+} \text{ in } T_{i^+, j^+}.
\end{gather*}

Let $\chi_T$ denote the characteristic function on the element $T$. Take basis functions associated with DoF $\Phi^\Sigma_e$ as follows:
\begin{align*}
    \boldsymbol{\phi}^\Sigma_{e_{i^+, j}} &=
    \begin{pmatrix}
        - \frac{h^x_{i^+}}{h^y_{j^+}} \chi_{T^-_{i^+, j^+}} \!\! - \frac{h^x_{i^+}}{h^y_{j^-}} \chi_{T^+_{i^+, j^-}} & \chi_{T_{i^+, j}} \\
        0 & 0
    \end{pmatrix}, \\
    \boldsymbol{\phi}^\Sigma_{e_{i, j^+}} &=
    \begin{pmatrix}
        0 & 0 \\
        \chi_{T_{i, j^+}} & - \frac{h^y_{j^+}}{h^x_{i^+}} \chi_{T^-_{i^+, j^+}} \!\! - \frac{h^y_{j^+}}{h^x_{i^-}} \chi_{T^+_{i^-, j^+}}
    \end{pmatrix},
\end{align*}
where the definition outside the domain $\Omega$ is specified to be ignored. Take basis functions associated with DoF $\Phi^{\Sigma, x}_e$ and $\Phi^{\Sigma, y}_e$ as follows:
\begin{align*}
    \boldsymbol{\phi}^{\Sigma, x}_{e_{i^+, j^+}} =
    \begin{pmatrix}
        \frac{\chi_{T_{i^+, j^+}}}{n^x_{i^+, j^+}} & 0 \\
        0 & 0
    \end{pmatrix}, \quad
    \boldsymbol{\phi}^{\Sigma, y}_{e_{i^+, j^+}} =
    \begin{pmatrix}
        0 & 0 \\
        0 & \frac{\chi_{T_{i^+, j^+}}}{n^y_{i^+, j^+}}
    \end{pmatrix}.
\end{align*}
Basis functions associated with DoF $\Phi^U_e$ and $\Phi^P_e$ are
\begin{gather*}
    \boldsymbol{\phi}^U_{e_{i, j^+}} = (\chi_{T_{i, j^+}}, 0)^{\mathsf{T}}, \quad
    \boldsymbol{\phi}^U_{e_{i^+, j}} = (0, \chi_{T_{i^+, j}})^{\mathsf{T}}, \\
    \phi^P_{e_{i^+, j^+}} = \chi_{T_{i^+, j^+}}.
\end{gather*}
It is easy to check that each basis function yields a non-zero value when evaluated against its associated DoF and zero when evaluated against all other DoF.
We choose these basis functions so that the expansion coefficients of the finite element function coincide with its values on the associated locations, i.e., for $\boldsymbol{\tau}_h \in \Sigma_h$, $\mathbf{v}_h \in U_h$ and $q_h \in P_h$,
\begin{align*}
    \boldsymbol{\tau}_h &= \sum_{(i^+, j) \in \Lambda^H_h} \tau^{x, y}_{i^+, j} \boldsymbol{\phi}^\Sigma_{e_{i^+, j}} + \sum_{(i, j^+) \in \Lambda^V_h} \tau^{y, x}_{i, j^+} \boldsymbol{\phi}^\Sigma_{e_{i, j^+}} + \sum_{(i^+, j^+) \in \Lambda^D_h} (\tau^{n, x}_{i^+, j^+} \boldsymbol{\phi}^{\Sigma, x}_{e_{i^+, j^+}} + \tau^{n, y}_{i^+, j^+} \boldsymbol{\phi}^{\Sigma, y}_{e_{i^+, j^+}}), \\
    \mathbf{v}_h &= \sum_{(i, j^+) \in \Lambda^V_h} v^x_{i, j^+} \boldsymbol{\phi}^U_{e_{i, j^+}} + \sum_{(i^+, j) \in \Lambda^H_h} v^y_{i^+, j} \boldsymbol{\phi}^U_{e_{i^+, j}}, \\
    q_h &= \sum_{(i^+, j^+) \in \Lambda^D_h} q_{i^+, j^+} \phi^P_{e_{i^+, j^+}}.
\end{align*}

Split $\Sigma_h$ into diagonal and off-diagonal parts:
\begin{align*}
    \Sigma^D_h = \mathrm{span} \{ \boldsymbol{\phi}^{\Sigma, x}_{e_{i^+, j^+}}, \boldsymbol{\phi}^{\Sigma, y}_{e_{i^+, j^+}} \}, \quad
    \Sigma^O_h = \mathrm{span} \{ \boldsymbol{\phi}^\Sigma_{e_{i^+, j}}, \boldsymbol{\phi}^\Sigma_{e_{i, j^+}} \}.
\end{align*}
For any $\boldsymbol{\tau}_h \in \Sigma_h$, we write $\boldsymbol{\tau}_h = \boldsymbol{\tau}^D_h + \boldsymbol{\tau}^O_h$, where $\boldsymbol{\tau}^D_h \in \Sigma^D_h$, $\boldsymbol{\tau}^O_h \in \Sigma^O_h$. 
Let $\mathsf{A}$, $\mathsf{B}$ and $\mathsf{b}$ be the matrices corresponding to the bilinear forms $(\boldsymbol{\sigma}_h, \boldsymbol{\tau}_h)$, $B_h (\boldsymbol{\sigma}_h, \mathbf{v}_h)$ and $b_h (\mathbf{u}_h, q_h)$, respectively. Under the above space decomposition, $\mathsf{A}$ and $\mathsf{B}$ admit the following block structures:
\begin{align*}
    \mathsf{A} &= \begin{pmatrix}
        \mathsf{A}_{DD} & \mathsf{A}_{DO} \\
        \mathsf{A}_{OD} & \mathsf{A}_{OO}
    \end{pmatrix}, \quad
    \mathsf{B} = (\mathsf{B}_D, \mathsf{B}_O).
\end{align*}
Let $\mathsf{F}$ be the vector corresponding to the linear functional $(\mathbf{f}, \mathbf{v}_h)$. Let $\mathsf{\sigma}_D$, $\mathsf{\sigma}_O$, $\mathsf{u}$ and $\mathsf{p}$ be the coefficient vectors of $\boldsymbol{\sigma}^D_h$, $\boldsymbol{\sigma}^O_h$, $\mathbf{u}_h$ and $p_h$ with respect to the chosen basis functions. Then the algebraic system of the SDG$_0$ scheme can be expressed as
\begin{align*}
    \begin{pmatrix}
        \mathsf{A}_{DD} & \mathsf{A}_{DO} & - \mathsf{B}^{\mathsf{T}}_D & 0 \\
        \mathsf{A}_{OD} & \mathsf{A}_{OO} & - \mathsf{B}^{\mathsf{T}}_O & 0 \\
        \nu \mathsf{B}_D & \nu \mathsf{B}_O & 0 & - \mathsf{b}^{\mathsf{T}} \\
        0 & 0 & \mathsf{b} & 0
    \end{pmatrix}
    \begin{pmatrix}
        \mathsf{\sigma}_D \\
        \mathsf{\sigma}_O \\
        \mathsf{u} \\
        \mathsf{p}
    \end{pmatrix}
    =
    \begin{pmatrix}
        0 \\
        0 \\
        \mathsf{F} \\
        0
    \end{pmatrix}.
\end{align*}
Noting that $\mathsf{A}_{DD}$ is diagonal and can be eliminated directly, we obtain a reduced system:
\begin{align*}
    \begin{pmatrix}
        \tilde{\mathsf{A}} & - \tilde{\mathsf{B}}^{\mathsf{T}} & 0 \\
        \nu \tilde{\mathsf{B}} & \nu \tilde{\mathsf{a}} & - \mathsf{b}^{\mathsf{T}} \\
        0 & \mathsf{b} & 0
    \end{pmatrix}
    \begin{pmatrix}
        \mathsf{\sigma}_O \\
        \mathsf{u} \\
        \mathsf{p}
    \end{pmatrix}
    =
    \begin{pmatrix}
        0 \\
        \mathsf{F} \\
        0
    \end{pmatrix},
\end{align*}
where
\begin{align*}
    \tilde{\mathsf{A}} &= \mathsf{A}_{OO} - \mathsf{A}_{OD} \mathsf{A}^{-1}_{DD} \mathsf{A}_{DO}, \\
    \tilde{\mathsf{B}} &= \mathsf{B}_O - \mathsf{B}_D \mathsf{A}^{-1}_{DD} \mathsf{A}_{DO}, \\
    \tilde{\mathsf{a}} &= \mathsf{B}_D \mathsf{A}^{-1}_{DD} \mathsf{B}^{\mathsf{T}}_D.
\end{align*}
After some algebra (see Appendix~\ref{sec: appendix} for details), the condensed SDG$_0$ scheme can be written explicitly as follows:
\begin{empheq}[left=\empheqlbrace]{align}
    &\begin{multlined} \label{eq: condensed SDG 1}
        [h^x_{i^+} h^y_j + \frac{(h^x_{i^+})^3}{4 h^y_{j^+}} + \frac{(h^x_{i^+})^3}{4 h^y_{j^-}}] \sigma^{x, y}_{i^+, j} - \frac{(h^x_{i^+})^3}{4 h^y_{j^+}} \sigma^{x, y}_{i^+, j + 1} - \frac{(h^x_{i^+})^3}{4 h^y_{j^-}} \sigma^{x, y}_{i^+, j - 1} \\ - \frac{1}{2} h^x_{i^+} u^x_{i, j^+} + \frac{1}{2} h^x_{i^+} u^x_{i, j^-} - \frac{1}{2} h^x_{i^+} u^x_{i + 1, j^+} + \frac{1}{2} h^x_{i^+} u^x_{i + 1, j^-} = 0, \quad \text{for } (i^+, j) \in \Lambda^H_h,
    \end{multlined} \\
    &\begin{multlined} \label{eq: condensed SDG 2}
        (h^x_i h^y_{j^+} + \frac{(h^y_{j^+})^3}{4 h^x_{i^+}} + \frac{(h^y_{j^+})^3}{4 h^x_{i^-}}) \sigma^{y, x}_{i, j^+} - \frac{(h^y_{j^+})^3}{4 h^x_{i^+}} \sigma^{y, x}_{i + 1, j^+} - \frac{(h^y_{j^+})^3}{4 h^x_{i^-}} \sigma^{y, x}_{i - 1, j^+} \\ - \frac{1}{2} h^y_{j^+} u^y_{i^+, j} + \frac{1}{2} h^y_{j^+} u^y_{i^-, j} - \frac{1}{2} h^y_{j^+} u^y_{i^+, j + 1} + \frac{1}{2} h^y_{j^+} u^y_{i^-, j + 1} = 0, \quad \text{for } (i, j^+) \in \Lambda^V_h,
    \end{multlined} \\
    &\begin{multlined} \label{eq: condensed SDG 3}
        -\nu [\frac{h^x_{i^+}}{2} \sigma^{x, y}_{i^+, j + 1} - \frac{h^x_{i^+}}{2} \sigma^{x, y}_{i^+, j} + \frac{h^x_{i^-}}{2} \sigma^{x, y}_{i^-, j + 1} - \frac{h^x_{i^-}}{2} \sigma^{x, y}_{i^-, j} + \frac{h^y_{j^+}}{h^x_{i^+}} u^x_{i + 1, j^+} - \frac{2 h^x_i h^y_{j^+}}{h^x_{i^+} h^x_{i^-}} u^x_{i, j^+} \\ + \frac{h^y_{j^+}}{h^x_{i^-}} u^x_{i - 1, j^+}] + h^y_{j^+} p_{i^+, j^+} - h^y_{j^+} p_{i^-, j^+} = \int_{T_{i, j^+}} f^x \, dx \, dy, \quad \text{for } (i, j^+) \in \mathring{\Lambda}^V_h,
    \end{multlined} \\
    &\begin{multlined} \label{eq: condensed SDG 4}
        -\nu [\frac{h^y_{j^+}}{2} \sigma^{y, x}_{i + 1, j^+} - \frac{h^y_{j^+}}{2} \sigma^{y, x}_{i, j^+} + \frac{h^y_{j^-}}{2} \sigma^{y, x}_{i + 1, j^-} - \frac{h^y_{j^-}}{2} \sigma^{y, x}_{i, j^-} + \frac{h^x_{i^+}}{h^y_{j^+}} u^y_{i^+, j + 1} - \frac{2 h^y_j h^x_{i^+}}{h^y_{j^+} h^y_{j^-}} u^y_{i^+, j} \\ + \frac{h^x_{i^+}}{h^y_{j^-}} u^y_{i^+, j - 1}] + h^x_{i^+} p_{i^+, j^+} - h^x_{i^+} p_{i^+, j^-} = \int_{T_{i^+, j}} f^y \, dx \, dy, \quad \text{for } (i^+, j) \in \mathring{\Lambda}^H_h,
    \end{multlined} \\
    &h^y_{j^+} u^x_{i + 1, j^+} - h^y_{j^+} u^x_{i, j^+} + h^x_{i^+} u^y_{i^+, j + 1} - h^x_{i^+} u^y_{i^+, j} = 0, \quad \text{for } (i^+, j^+) \in \Lambda^D_h, \label{eq: condensed SDG 5}
\end{empheq}
where the terms outside the domain $\Omega$ are specified to be ignored.
\begin{remark} \label{rmk: mass lumping}
    The off-diagonal components of the velocity gradient that remain after the static condensation can be further eliminated by standard mass lumping without loss of accuracy, as confirmed by numerical experiments in Section~\ref{sec 7}.
\end{remark}

After static condensation, the diagonal entries $\sigma^{x,x}_h$ and $\sigma^{y,y}_h$ are no longer present in this system. Rather than recovering them from the condensation relation, we simply replace them by the central difference quotients of the velocity, i.e., define
\begin{align} \label{eq: reconstructed diagonal velocity gradient}
    \breve{\sigma}^{x, x}_h \equiv \breve{\sigma}^{x, x}_{i^+, j^+} = d_x u^x_{i^+, j^+} \text{ in } T_{i^+, j^+}, \quad
    \breve{\sigma}^{y, y}_h \equiv \breve{\sigma}^{y, y}_{i^+, j^+} = d_y u^y_{i^+, j^+} \text{ in } T_{i^+, j^+},
\end{align}
where
\begin{align*}
    d_x u^x_{i^+, j^+} = \frac{u^x_{i + 1, j^+} - u^x_{i, j^+}}{h^x_{i^+}}, \quad
    d_y u^y_{i^+, j^+} = \frac{u^y_{i^+, j + 1} - u^y_{i^+, j}}{h^y_{j^+}}.
\end{align*}
Let $\breve{\boldsymbol{\sigma}}_h$ denote the reconstructed velocity gradient tensor:
\begin{align} \label{eq: reconstructed velocity gradient}
    \breve{\boldsymbol{\sigma}}_h =
    \begin{pmatrix}
        \breve{\sigma}^{x, x}_h & \sigma^{x, y}_h \\
        \sigma^{y, x}_h & \breve{\sigma}^{y, y}_h
    \end{pmatrix}.
\end{align}
\begin{remark}
    The reconstruction \eqref{eq: reconstructed diagonal velocity gradient} is motivated not only by the second-order accuracy of the central difference quotients, but also by an exact averaging property: $d_x u^x_{i^+, j^+}$ and $d_y u^y_{i^+, j^+}$ coincide exactly with the cell average of $\sigma^{x, x}_h$ and $\sigma^{y, y}_h$, respectively (see Appendix~\ref{sec: appendix} for details).
\end{remark}

In the subsequent analysis, we interpret $\breve{\sigma}^{x, x}_{i^+, j^+}$, $\sigma^{x, y}_{i^+, j}$, $\sigma^{y, x}_{i, j^+}$, $\breve{\sigma}^{y, y}_{i^+, j^+}$, $u^x_{i, j^+}$, $u^y_{i^+, j}$ and $p_{i^+, j^+}$ as the approximations of $\sigma^{x, x} (x_{i^+}, y_{j^+})$, $\sigma^{x, y} (x_{i^+}, y_j)$, $\sigma^{y, x} (x_i, y_{j^+})$, $\sigma^{y, y} (x_{i^+}, y_{j^+})$, $u^x (x_i, y_{j^+})$, $u^y (x_{i^+}, y_j)$ and $p (x_{i^+}, y_{j^+})$, respectively. Accordingly, we define the pointwise norms:
\begin{align*}
    \| \breve{\boldsymbol{\sigma}}_h \|^2_{l^2} &= \sum_{(i^+, j^+) \in \Lambda^D_h} h^x_{i^+} h^y_{j^+} (\breve{\sigma}^{x, x}_{i^+, j^+})^2 + \sum_{(i^+, j) \in \Lambda^H_h} h^x_{i^+} h^y_j (\sigma^{x, y}_{i^+, j})^2 \\ &\quad + \sum_{(i, j^+) \in \Lambda^V_h} h^x_i h^y_{j^+} (\sigma^{y, x}_{i, j^+})^2 + \sum_{(i^+, j^+) \in \Lambda^D_h} h^x_{i^+} h^y_{j^+} (\breve{\sigma}^{y, y}_{i^+, j^+})^2, \\
    \| \mathbf{u}_h \|^2_{l^2} &= \sum_{(i, j^+) \in \Lambda^V_h} h^x_i h^y_{j^+} (u^x_{i, j^+})^2 + \sum_{(i^+, j) \in \Lambda^H_h} h^x_{i^+} h^y_j (u^y_{i^+, j})^2, \\
    \| p_h \|^2_{l^2} &= \sum_{(i^+, j^+) \in \Lambda^D_h} h^x_{i^+} h^y_{j^+} (p_{i^+, j^+})^2,
\end{align*}
where $\| \breve{\boldsymbol{\sigma}}_h \|^2_{l^2}$ can be decomposed as diagonal and off-diagonal seminorms:
\begin{align*}
    | \boldsymbol{\sigma}_h |^2_{l^2, off} &= \sum_{(i^+, j) \in \Lambda^H_h} h^x_{i^+} h^y_j (\sigma^{x, y}_{i^+, j})^2 + \sum_{(i, j^+) \in \Lambda^V_h} h^x_i h^y_{j^+} (\sigma^{y, x}_{i, j^+})^2, \\
    | \mathbf{u}_h |^2_{h^1, diag} &= \sum_{(i^+, j^+) \in \Lambda^D_h} h^x_{i^+} h^y_{j^+} [ (d_x u^x_{i^+, j^+})^2 + (d_y u^y_{i^+, j^+})^2 ].
\end{align*}
The relationship between the pointwise norms and the SDG norms is summarized in the following lemma.
\begin{lemma} \label{lem: norm relation}
    For $\boldsymbol{\tau}_h \in \Sigma_h$, $\mathbf{v}_h \in U_h$ and $q_h \in P_h$, it holds that
    \begin{align}
        | \boldsymbol{\tau}_h |_{l^2, off} &\lesssim \| \boldsymbol{\tau}_h \|_{L^2}, \label{eq: norm relation 1} \\
        | \mathbf{v}_h |_{h^1, diag} &\lesssim \| \mathbf{v}_h \|_{H^1, h}, \label{eq: norm relation 2} \\
        \| \mathbf{v}_h \|_{l^2} &= \| \mathbf{v}_h \|_{L^2}, \label{eq: norm relation 3} \\
        \| q_h \|_{l^2} &= \| q_h \|_{L^2}. \label{eq: norm relation 4}
    \end{align}
    For $\mathbf{v}_h \in U_{h, 0}$, it holds that
    \begin{align} \label{eq: norm relation 5}
        \| \mathbf{v}_h \|_{l^2} \lesssim | \mathbf{v}_h |_{h^1, diag},
    \end{align}
    and consequently,
    \begin{align*}
        \| \mathbf{v}_h \|_{L^2} \lesssim \| \mathbf{v}_h \|_{H^1, h}.
    \end{align*}
\end{lemma}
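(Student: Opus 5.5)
I will prove each relation in the lemma directly from the explicit characterization of the spaces in Section~\ref{sec 4}. All the relevant functions are piecewise constant on the staggered quadrilaterals, so every estimate reduces to comparing the areas of these elements with the weights in the pointwise norms, or with edge lengths.

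\emph{The two identities.} For \eqref{eq: norm relation 3} and \eqref{eq: norm relation 4} I simply compute areas. The quadrilateral $T_{i,j^+}$ consists of two triangles with legs $h^y_{j^+}$ and $h^x_{i^\pm}$, so its area is $\tfrac12 h^y_{j^+}(h^x_{i^+}+h^x_{i^-}) = h^x_i h^y_{j^+}$. On the boundary it is a single triangle of area $h^x_{0^+}h^y_{j^+}/2 = h^x_0 h^y_{j^+}$, which is consistent with the boundary definition of $h^x_i$. The same computation gives $|T_{i^+,j}| = h^x_{i^+}h^y_j$ and $|T_{i^+,j^+}| = h^x_{i^+}h^y_{j^+}$. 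Since $v^x_h$ is constant on $\mathcal{T}^V_h$, $v^y_h$ on $\mathcal{T}^H_h$, and $q_h$ on $\mathcal{T}^D_h$, the $L^2$ norms coincide exactly with the $l^2$ norms.

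\emph{The off-diagonal bound \eqref{eq: norm relation 1}.} The components $\tau^{x,y}_h$ and $\tau^{y,x}_h$ are constant on $T_{i^+,j}$ and $T_{i,j^+}$ respectively. By the area identities above, the quantity $|\boldsymbol{\tau}_h|^2_{l^2,off}$ equals $\|\tau^{x,y}_h\|^2_{L^2}+\|\tau^{y,x}_h\|^2_{L^2}$, which is at most $\|\boldsymbol{\tau}_h\|^2_{L^2}$. In fact this holds with constant one.

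\emph{The diagonal bound \eqref{eq: norm relation 2}.} On the diagonal edge $e_{i^+,j^+}$, take $\mathbf{n}$ pointing toward $T^+$. Then $\llbracket\mathbf{v}_h\rrbracket = \mathbf{v}_h|_{T^-}-\mathbf{v}_h|_{T^+}$. The $x$-component of this jump is $u^x_{i,j^+}-u^x_{i+1,j^+} = -h^x_{i^+}\,d_x u^x_{i^+,j^+}$, and the $y$-component is $-h^y_{j^+}\,d_y u^y_{i^+,j^+}$. It follows that
\[
h_e^{-1}\|\llbracket\mathbf{v}_h\rrbracket\|^2_{L^2(e)} = (h^x_{i^+})^2(d_xu^x)^2+(h^y_{j^+})^2(d_yu^y)^2 \gtrsim h^x_{i^+}h^y_{j^+}\big[(d_xu^x)^2+(d_yu^y)^2\big],
\]
where the last step uses the regularity assumption \eqref{eq: mesh regularity}. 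Summing over $\Lambda^D_h$ gives the bound, because the remaining terms in $\|\mathbf{v}_h\|^2_{H^1,h}$ are nonnegative.

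\emph{The discrete Poincaré inequality \eqref{eq: norm relation 5}.} This is the step needing actual work, and I will do it as a one-dimensional discrete Poincaré inequality along grid lines. For $\mathbf{v}_h\in U_{h,0}$ the condition $\mathbf{v}_h\cdot\mathbf{n}=0$ on $\partial\Omega$ forces $u^x_{0,j^+}=u^x_{n_x,j^+}=0$, and similarly $u^y_{i^+,0}=u^y_{i^+,n_y}=0$. Fix $j^+$ and telescope: $u^x_{i,j^+}=\sum_{k<i}h^x_{k^+}d_xu^x_{k^+,j^+}$. Cauchy--Schwarz then gives
\[
(u^x_{i,j^+})^2 \le L_x\sum_k h^x_{k^+}(d_xu^x_{k^+,j^+})^2,
\]
where $L_x$ is the width of $\Omega$. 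Multiplying by $h^x_i h^y_{j^+}$ and summing over $i$ (using $\sum_i h^x_i = L_x$) and then over $j^+$ yields
\[
\sum h^x_ih^y_{j^+}(u^x_{i,j^+})^2 \le L_x^2\sum h^x_{k^+}h^y_{j^+}(d_xu^x)^2.
\]
The symmetric argument in $y$ handles $u^y$, which proves \eqref{eq: norm relation 5}. The final consequence $\|\mathbf{v}_h\|_{L^2}\lesssim\|\mathbf{v}_h\|_{H^1,h}$ follows by chaining \eqref{eq: norm relation 3}, \eqref{eq: norm relation 5} and \eqref{eq: norm relation 2}. The only points that need care are the sign and orientation bookkeeping in the diagonal jump and the boundary half-cell weights; neither is a real obstacle.
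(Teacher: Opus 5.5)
Your proposal is correct and follows essentially the same route as the paper. It uses exact area identities for \eqref{eq: norm relation 1}, \eqref{eq: norm relation 3} and \eqref{eq: norm relation 4}, the diagonal-edge jump contributions $(v^x_{i+1,j^+}-v^x_{i,j^+})^2+(v^y_{i^+,j+1}-v^y_{i^+,j})^2$ for \eqref{eq: norm relation 2}, and a one-dimensional telescoping/Cauchy--Schwarz discrete Poincar\'e argument along grid lines for \eqref{eq: norm relation 5}. Your version also makes explicit what the paper leaves implicit: the element areas including the boundary half-cells, the use of the aspect-ratio assumption \eqref{eq: mesh regularity} in \eqref{eq: norm relation 2}, and the constant $L_x^2$.
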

\begin{proof}
    \eqref{eq: norm relation 1}, \eqref{eq: norm relation 3} and \eqref{eq: norm relation 4} are evident from the definition of norms. As for \eqref{eq: norm relation 2},
    \begin{align*}
        \| \mathbf{v}_h \|^2_{H^1, h} &\geq \sum_{e \in \mathcal{E}^D_h} h^{-1}_e \| \llbracket v^x_h \rrbracket \|^2_{L^2(e)} + \sum_{e \in \mathcal{E}^D_h} h^{-1}_e \| \llbracket v^y_h \rrbracket \|^2_{L^2(e)} \\
        &= \sum_{(i^+, j^+) \in \Lambda^D_h} [ (v^x_{i + 1, j^+} - v^x_{i, j^+})^2 + (v^y_{i^+, j + 1} - v^y_{i^+, j})^2 ] \\
        &\gtrsim | \mathbf{v}_h |_{h^1, diag}.
    \end{align*}
    Now we prove \eqref{eq: norm relation 5}. For $\mathbf{v}_h \in U_{h, 0}$,
    \begin{align*}
        v^x_{l, j^+} = \sum_{i = 0}^{l - 1} h^x_{i^+} d_x v^x_{i^+, j^+} \lesssim [ \sum_{i = 0}^{n_x - 1} h^x_{i^+} (d_x v^x_{i^+, j^+})^2 ]^{1/2}.
    \end{align*}
    Then,
    \begin{align*}
        \sum_{l = 1}^{n_x - 1} \sum_{j = 0}^{n_y - 1} h^x_l h^y_{j^+} (v^x_{l, j^+})^2 \lesssim \sum_{i = 0}^{n_x - 1} \sum_{j = 0}^{n_y - 1} h^x_{i^+} h^y_{j^+} (d_x v^x_{i^+, j^+})^2 \leq | \mathbf{v}_h |_{h^1, diag}.
    \end{align*}
    Similarly,
    \begin{align*}
        \sum_{i = 0}^{n_x - 1} \sum_{m = 1}^{n_y - 1} h^x_{i^+} h^y_m (v^y_{i^+, m})^2 \lesssim | \mathbf{v}_h |_{h^1, diag}.
    \end{align*}
    Adding the above two inequalities yields \eqref{eq: norm relation 5}.
\end{proof}
\begin{remark}
    The pointwise formulation \eqref{eq: condensed SDG 1}--\eqref{eq: condensed SDG 5} is reminiscent of the classical MAC scheme for the Stokes equations: the velocity is discretized at edge midpoints, whereas the pressure is located at cell centers. This formulation also yields a compact stencil that couples only nearest-neighbor unknowns, as illustrated in Figure~\ref{fig: 4.1}. The essential difference lies in the treatment of the off-diagonal velocity gradient. The MAC scheme approximates it by difference quotients of the velocity at the grid vertices, which are only first-order accurate on non-uniform grids \cite{rui2017stability}. Our scheme instead treats these components as independent unknowns at the edge midpoints, which are second-order superconvergent even on non-uniform grids (see Section~\ref{sec 5}). This benefit extends to the Navier-Stokes equations, where the velocity gradient enters the convection term, and its accurate approximation helps achieve overall second-order accuracy (see Section~\ref{sec 6}).
    \begin{figure}[htbp]
        \centering
        \includegraphics[width=\textwidth]{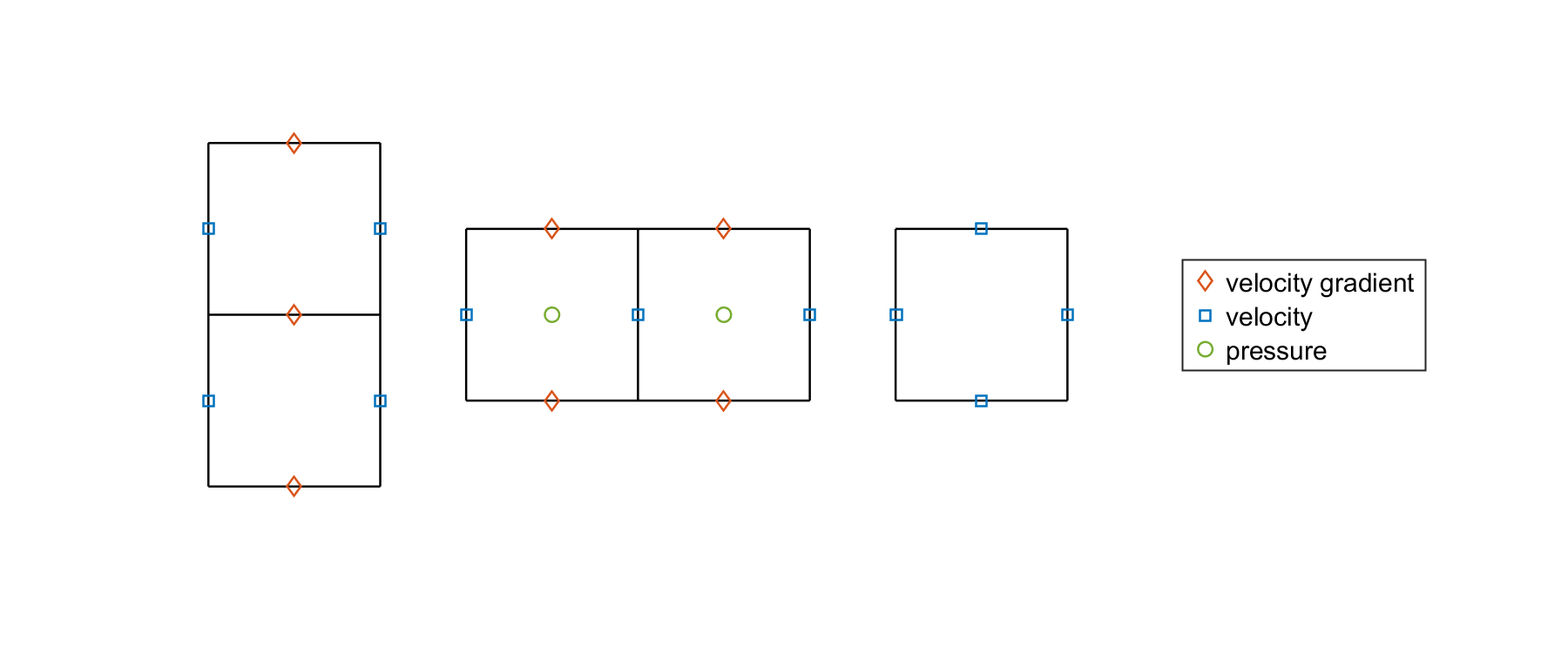}
        \caption{Stencils of the pointwise formulation: \eqref{eq: condensed SDG 1}, \eqref{eq: condensed SDG 3}, and \eqref{eq: condensed SDG 5} from left to right.}
        \label{fig: 4.1}
    \end{figure}
\end{remark}

\section{Error Analysis} \label{sec 5}

In this section, we develop a detailed pointwise error analysis of the SDG$_0$ scheme, and rigorously prove its pressure robustness and second-order superconvergence.
Throughout, we assume that the exact solution is sufficiently smooth, namely
\begin{align}
    \mathbf{u} \in [W^3_\infty (\Omega)]^2, \qquad p \in W^2_\infty (\Omega). \label{eq: solution regularity}
\end{align}
\begin{remark} \label{rmk: solution regularity}
    The regularity assumed above is stronger than that required in the standard SDG error analysis, due to the pointwise nature of our argument. Such regularity assumption is common in superconvergence analyses on Cartesian grids \cite{li2015superconvergence, rui2017stability}. Under the low regularity, the method then reverts to the conventional SDG framework and achieves the usual convergence rate. Since this analysis is standard, we do not repeat it here; refer to \cite{kim2013staggered, zhao2019staggered, zhao2020newa} for details.
\end{remark}

Define auxiliary variables:
\begin{align}
    \tilde{\boldsymbol{\sigma}}_h &= \Pi^\Sigma_h \boldsymbol{\sigma}, \label{eq: auxiliary variables 1} \\
    \tilde{\mathbf{u}}_h &= \Pi^U_h \mathbf{u} - \boldsymbol{\delta}_h, \label{eq: auxiliary variables 2} \\
    \tilde{p}_h &= \Pi^P_h p - \int_{\Omega} \Pi^P_h p \, dx \, dy, \label{eq: auxiliary variables 3}
\end{align}
where the correction term $\boldsymbol{\delta}_h \in U_h$ has the following components:
\begin{align*}
    \delta^x_{i, j^+} &= \frac{(h^y_{j^+})^2}{6} \cdot \frac{\partial^2 u^x}{\partial y^2} (x_i, y_{j^+}), \\
    \delta^y_{i^+, j} &= \frac{(h^x_{i^+})^2}{6} \cdot \frac{\partial^2 u^y}{\partial x^2} (x_{i^+}, y_j).
\end{align*}
It is clear that $\tilde{\mathbf{u}}_h \in U_{h, 0}$ and $\tilde{p}_h \in P_{h, 0}$.
The auxiliary variables are 2-nd order approximations of the exact solution at specific points, as stated in the following lemma.
\begin{lemma} \label{lem: projection error}
    For $\tilde{\boldsymbol{\sigma}}_h$, $\tilde{\mathbf{u}}_h$ and $\tilde{p}_h$ defined in \eqref{eq: auxiliary variables 1}--\eqref{eq: auxiliary variables 3}, it holds that
    \begin{align*}
        \tilde{\sigma}^{x, y}_{i^+, j} &= \sigma^{x, y} (x_{i^+}, y_j) + O(h^2) \| \mathbf{u} \|_{W^3_\infty}, &
        \tilde{\sigma}^{n, x}_{i^+, j^+} &= \sigma^{n, x} (x_{i^+}, y_{j^+}) + O(h^2) \| \mathbf{u} \|_{W^3_\infty}, \\
        \tilde{\sigma}^{y, x}_{i, j^+} &= \sigma^{y, x} (x_i, y_{j^+}) + O(h^2) \| \mathbf{u} \|_{W^3_\infty}, &
        \tilde{\sigma}^{n, y}_{i^+, j^+} &= \sigma^{n, y} (x_{i^+}, y_{j^+}) + O(h^2) \| \mathbf{u} \|_{W^3_\infty}, \\
        \tilde{u}^x_{i, j^+} &= u^x (x_i, y_{j^+}) + O(h^2) \| \mathbf{u} \|_{W^2_\infty}, &
        \tilde{u}^y_{i^+, j} &= u^y (x_{i^+}, y_j) + O(h^2) \| \mathbf{u} \|_{W^2_\infty}, \\
        \tilde{p}_{i^+, j^+} &= p (x_{i^+}, y_{j^+}) + O(h^2) \| p \|_{W^2_\infty}.
    \end{align*}
\end{lemma}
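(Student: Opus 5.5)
The plan is to identify each auxiliary value with an edge average of the relevant exact quantity, and then expand that average around the edge midpoint. The basis functions of Section~\ref{sec 4} are normalized so that the expansion coefficients are the constant values of the relevant components on the associated patches. Combined with the DoFs of Section~\ref{sec 3}, this gives the following explicit formulas for the canonical interpolants.

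On $e_{i^+,j}$ we have $\mathbf{n}=(0,1)^{\mathsf T}$ and $\mathbf{t}=(1,0)^{\mathsf T}$, so $\boldsymbol{\tau}\mathbf{n}\cdot\mathbf{t}=\tau^{x,y}$. Since $\tilde\sigma^{x,y}_h$ is constant on $e_{i^+,j}$, this gives $\tilde\sigma^{x,y}_{i^+,j}=\frac{1}{h^x_{i^+}}\int_{e_{i^+,j}}\sigma^{x,y}\,ds$. In the same way:
\begin{itemize}
\item $\tilde\sigma^{y,x}_{i,j^+}$ is the average of $\sigma^{y,x}$ over $e_{i,j^+}$;
\item $\tilde\sigma^{n,x}_{i^+,j^+}$ and $\tilde\sigma^{n,y}_{i^+,j^+}$ are the averages of $\sigma^{n,x}$ and $\sigma^{n,y}$ over the diagonal edge $e_{i^+,j^+}$; on this edge $\mathbf{n}$ is constant, so these are averages of smooth functions;
\item $(\Pi^U_h\mathbf{u})^x_{i,j^+}$ and $(\Pi^U_h\mathbf{u})^y_{i^+,j}$ are the averages of $u^x$ over $e_{i,j^+}$ and of $u^y$ over $e_{i^+,j}$;
\item $(\Pi^P_h p)_{i^+,j^+}$ is the average of $p$ over $e_{i^+,j^+}$.
\end{itemize}
Each edge is a straight segment whose midpoint is exactly the point named in the statement; for the diagonal edge this midpoint is $(x_{i^+},y_{j^+})$.

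The key step is the midpoint rule. For a $C^2$ function $g$ on a segment $e$ with midpoint $m$, we have $\frac{1}{h_e}\int_e g\,ds=g(m)+O(h_e^2)\|g\|_{W^2_\infty}$, because the linear Taylor term integrates to zero by symmetry. Apply this as follows:
\begin{itemize}
\item to $g=\sigma^{x,y},\sigma^{y,x},\sigma^{n,x},\sigma^{n,y}$, which are first derivatives of $\mathbf{u}$; this costs $\|\mathbf{u}\|_{W^3_\infty}$ and gives the four gradient estimates;
\item to $g=u^x,u^y$, which gives $(\Pi^U_h\mathbf{u})^x_{i,j^+}=u^x(x_i,y_{j^+})+O(h^2)\|\mathbf{u}\|_{W^2_\infty}$, and similarly for $u^y$;
\item to $g=p$, which gives $(\Pi^P_h p)_{i^+,j^+}=p(x_{i^+},y_{j^+})+O(h^2)\|p\|_{W^2_\infty}$.
\end{itemize}
The correction $\boldsymbol{\delta}_h$ does not affect the velocity estimate at the stated accuracy, since $|\delta^x_{i,j^+}|\le \frac{(h^y_{j^+})^2}{6}\|\mathbf{u}\|_{W^2_\infty}$, and similarly for $\delta^y$. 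Its finer purpose, cancelling the $h^2/24$ term in the averages, only matters later in the analysis. Boundary edges need no separate treatment: the same one-dimensional expansion applies, and normal components vanish there anyway.

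The only step that is not purely local is the pressure mean-value shift in \eqref{eq: auxiliary variables 3}; I expect it to be the main point to check. The patch $T_{i^+,j^+}$ is the union of the two triangles on either side of the diagonal of cell $[x_i,x_{i+1}]\times[y_j,y_{j+1}]$, that is, exactly this Cartesian cell, with area $h^x_{i^+}h^y_{j^+}$ and centre $(x_{i^+},y_{j^+})$. Hence
\begin{align*}
\int_\Omega \Pi^P_h p\,dx\,dy-\int_\Omega p\,dx\,dy=\sum_{(i^+,j^+)\in\Lambda^D_h}\Big(h^x_{i^+}h^y_{j^+}(\Pi^P_h p)_{i^+,j^+}-\int_{T_{i^+,j^+}}p\,dx\,dy\Big).
\end{align*}
Each summand is $h^x_{i^+}h^y_{j^+}\cdot O(h^2)\|p\|_{W^2_\infty}$. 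This uses the edge-average estimate above, together with the two-dimensional midpoint rule $\int_{T_{i^+,j^+}}p=h^x_{i^+}h^y_{j^+}\,p(x_{i^+},y_{j^+})+O(h^2)\,h^x_{i^+}h^y_{j^+}\|p\|_{W^2_\infty}$, which again holds because the cell is centrally symmetric about $(x_{i^+},y_{j^+})$. Summing over cells and using \eqref{eq: Stokes condition 2}, the subtracted constant is $O(h^2)|\Omega|\,\|p\|_{W^2_\infty}$. Adding this to the pointwise estimate for $\Pi^P_h p$ gives the claimed bound for $\tilde p_{i^+,j^+}$.
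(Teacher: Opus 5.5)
Your proof is correct and follows essentially the same route as the paper. The six gradient and velocity estimates come from reading the canonical interpolants as edge averages and applying the midpoint rule, with $\boldsymbol{\delta}_h = O(h^2)\|\mathbf{u}\|_{W^2_\infty}$. The pressure shift is bounded exactly as in the paper: you split each cell contribution into (edge average minus centre value) plus (centre value minus cell integral) and use $\int_\Omega p\,dx\,dy = 0$.

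One side remark is inaccurate, though it does not affect this lemma. The coefficient of $\boldsymbol{\delta}_h$ is $1/6$ rather than $1/24$, so it does not cancel the midpoint-rule term; in fact $\tilde u^x_{i,j^+} = u^x(x_i,y_{j^+}) - \frac{(h^y_{j^+})^2}{8}\frac{\partial^2 u^x}{\partial y^2}(x_i,y_{j^+}) + O(h^3)\|\mathbf{u}\|_{W^3_\infty}$. Its actual purpose appears in Lemma~\ref{lem: truncation error 2}, where it cancels the residual $\frac{h^x_{i^+}}{6}\big((h^y_{j^-})^2-(h^y_{j^+})^2\big)\frac{\partial^2 u^x}{\partial y^2}$ that the gradient terms leave behind on non-uniform grids.
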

\begin{proof}
    The first six equations are direct consequences from the definition of the projection operator. As for the last equation, we write $\tilde{p}_h$ in detail:
    \begin{align*}
        \tilde{p}_{i^+, j^+} &= \frac{1}{l_{i^+, j^+}} \int_{e_{i^+, j^+}} p \, ds - \sum_{(i^+, j^+) \in \Lambda^D_h} \int_{T_{i^+, j^+}} \left( \frac{1}{l_{i^+, j^+}} \int_{e_{i^+, j^+}} p \, ds \right) \, dx \, dy.
    \end{align*}
    Since $p$ is zero mean,
    \begin{align*}
        \tilde{p}_{i^+, j^+} - p (x_{i^+, j^+})
        &= \frac{1}{l_{i^+, j^+}} \int_{e_{i^+, j^+}} p \, ds - p (x_{i^+, j^+}) \\ 
        &\quad - \sum_{(i^+, j^+) \in \Lambda^D_h} \int_{T_{i^+, j^+}} \left[ \frac{1}{l_{i^+, j^+}} \int_{e_{i^+, j^+}} p \, ds - p (x_{i^+, j^+}) \right] \, dx \, dy \\ 
        &\quad - \sum_{(i^+, j^+) \in \Lambda^D_h} \int_{T_{i^+, j^+}} [ p (x_{i^+, j^+}) - p ] \, dx \, dy \\
        &= O(h^2) \| p \|_{W^2_\infty}.
    \end{align*}
\end{proof}

Now we analyze the truncation errors by substituting the auxiliary variables into the SDG$_0$ scheme.
\begin{lemma} \label{lem: truncation error 1}
    Let $\tilde{\boldsymbol{\sigma}}_h$ and $\tilde{\mathbf{u}}_h$ be defined in \eqref{eq: auxiliary variables 1}--\eqref{eq: auxiliary variables 2}. For basis functions $\boldsymbol{\phi}^{\Sigma, x}_e$ and $\boldsymbol{\phi}^{\Sigma, y}_e$, it holds that
    \begin{align}
        (\tilde{\boldsymbol{\sigma}}_h, \boldsymbol{\phi}^{\Sigma, x}_{e_{i^+, j^+}}) - B^*_h (\tilde{\mathbf{u}}_h, \boldsymbol{\phi}^{\Sigma, x}_{e_{i^+, j^+}}) &= (\mathbf{R}^{\Sigma, x}_{e_{i^+, j^+}}, \boldsymbol{\phi}^{\Sigma, x}_{e_{i^+, j^+}}), \label{eq: truncation error 1.1} \\
        (\tilde{\boldsymbol{\sigma}}_h, \boldsymbol{\phi}^{\Sigma, y}_{e_{i^+, j^+}}) - B^*_h (\tilde{\mathbf{u}}_h, \boldsymbol{\phi}^{\Sigma, y}_{e_{i^+, j^+}}) &= (\mathbf{R}^{\Sigma, y}_{e_{i^+, j^+}}, \boldsymbol{\phi}^{\Sigma, y}_{e_{i^+, j^+}}), \label{eq: truncation error 1.2}
    \end{align}
    where
    \begin{align*}
        \mathbf{R}^{\Sigma, x}_{e_{i^+, j^+}} =
        \begin{pmatrix}
            R^{\Sigma, x}_{i^+, j^+} \chi_{T_{i^+, j^+}} & 0 \\
            0 & 0
        \end{pmatrix}, \quad
        \mathbf{R}^{\Sigma, y}_{e_{i^+, j^+}} =
        \begin{pmatrix}
            0 & 0 \\
            0 & R^{\Sigma, y}_{i^+, j^+} \chi_{T_{i^+, j^+}}
        \end{pmatrix},
    \end{align*}
    with $R^{\Sigma, x}_{i^+, j^+} = O(h^2) \| \mathbf{u} \|_{W^3_\infty}$ and $R^{\Sigma, y}_{i^+, j^+} = O(h^2) \| \mathbf{u} \|_{W^3_\infty}$.
\end{lemma}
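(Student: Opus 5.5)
The plan is to test against the single basis function $\boldsymbol{\phi}^{\Sigma, x}_{e_{i^+, j^+}}$ and evaluate both sides explicitly. This reduces \eqref{eq: truncation error 1.1} to a scalar identity defining $R^{\Sigma, x}_{i^+, j^+}$, which I then Taylor-expand. The $y$-case \eqref{eq: truncation error 1.2} is symmetric: swap $x\leftrightarrow y$, use the basis $\boldsymbol{\phi}^{\Sigma, y}_{e_{i^+, j^+}}$ and the horizontal edges $e_{i^+, j}$, $e_{i^+, j+1}$. So I only describe the $x$-case.

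\emph{Step 1: the mass term.} Since $\boldsymbol{\phi}^{\Sigma, x}_{e_{i^+, j^+}}$ has only an $xx$-entry, equal to $1/n^x_{i^+, j^+}$ on $T_{i^+, j^+}$, we have $(\tilde{\boldsymbol{\sigma}}_h, \boldsymbol{\phi}^{\Sigma, x}_{e_{i^+, j^+}}) = \frac{1}{n^x_{i^+, j^+}} \int_{T_{i^+, j^+}} \tilde{\sigma}^{x, x}_h$. By \eqref{eq: velocity gradient relation}, $\tilde{\sigma}^{x,x}_h$ is constant on each of $T^\pm_{i^+, j^+}$, and each triangle has area $h^x_{i^+}h^y_{j^+}/2$. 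This gives
\begin{align*}
(\tilde{\boldsymbol{\sigma}}_h, \boldsymbol{\phi}^{\Sigma, x}_{e_{i^+, j^+}}) = \frac{h^x_{i^+}h^y_{j^+}}{(n^x_{i^+, j^+})^2}\Big[\tilde{\sigma}^{n,x}_{i^+, j^+} - n^y_{i^+, j^+}\,\tfrac{1}{2}\big(\tilde{\sigma}^{x,y}_{i^+, j}+\tilde{\sigma}^{x,y}_{i^+, j+1}\big)\Big].
\end{align*}

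\emph{Step 2: the $B^*_h$ term.} On horizontal edges $\boldsymbol{\tau}\mathbf{n}\cdot\mathbf{n} = \tau^{y,y}$, which vanishes for this basis function. On vertical edges $\boldsymbol{\tau}\mathbf{n}\cdot\mathbf{n} = \tau^{x,x}$, which is nonzero only across $e_{i, j^+}$ (left side of $T^-_{i^+, j^+}$) and $e_{i+1, j^+}$ (right side of $T^+_{i^+, j^+}$). With $\mathbf{n}=(1,0)^{\mathsf T}$ and $\llbracket\cdot\rrbracket = (\cdot)^- - (\cdot)^+$, the jumps are $-1/n^x_{i^+, j^+}$ and $+1/n^x_{i^+, j^+}$ respectively. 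Hence
\begin{align*}
B^*_h(\tilde{\mathbf{u}}_h, \boldsymbol{\phi}^{\Sigma, x}_{e_{i^+, j^+}}) = \frac{h^y_{j^+}}{n^x_{i^+, j^+}}\big(\tilde u^x_{i+1, j^+}-\tilde u^x_{i, j^+}\big).
\end{align*}
Since $(\mathbf{R}^{\Sigma,x}_{e_{i^+, j^+}}, \boldsymbol{\phi}^{\Sigma, x}_{e_{i^+, j^+}}) = R^{\Sigma,x}_{i^+, j^+} h^x_{i^+}h^y_{j^+}/n^x_{i^+, j^+}$, the identity \eqref{eq: truncation error 1.1} holds with
\begin{align*}
R^{\Sigma,x}_{i^+, j^+} = \frac{1}{n^x_{i^+, j^+}}\Big[\tilde{\sigma}^{n,x}_{i^+, j^+} - \tfrac{n^y_{i^+, j^+}}{2}\big(\tilde{\sigma}^{x,y}_{i^+, j}+\tilde{\sigma}^{x,y}_{i^+, j+1}\big)\Big] - d_x \tilde u^x_{i^+, j^+}.
\end{align*}

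\emph{Step 3: estimating the bracket.} By Lemma~\ref{lem: projection error}, $\tilde{\sigma}^{n,x}_{i^+, j^+} = n^x\sigma^{x,x}+n^y\sigma^{x,y}$ at $(x_{i^+},y_{j^+})$ up to $O(h^2)\|\mathbf{u}\|_{W^3_\infty}$. The average of $\sigma^{x,y}$ at $(x_{i^+},y_j)$ and $(x_{i^+},y_{j+1})$ equals $\sigma^{x,y}(x_{i^+},y_{j^+})+O(h^2)$. The $\sigma^{x,y}$ terms therefore cancel, and the bracket divided by $n^x$ equals $\sigma^{x,x}(x_{i^+},y_{j^+})+O(h^2)\|\mathbf{u}\|_{W^3_\infty}$. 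Here I use that $n^x_{i^+, j^+}$ is bounded below by the regularity assumption \eqref{eq: mesh regularity}.

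\emph{Step 4: the difference quotient (main obstacle).} It remains to show $d_x\tilde u^x_{i^+, j^+} = \partial_x u^x(x_{i^+},y_{j^+})+O(h^2)\|\mathbf{u}\|_{W^3_\infty}$. The difficulty is that Lemma~\ref{lem: projection error} only gives $\tilde u^x$ to $O(h^2)$, and dividing that by $h^x_{i^+}$ loses an order. So I will not use the lemma here. Instead I write $(\Pi^U_h\mathbf{u})^x_{i,j^+} = \frac{1}{h^y_{j^+}}\int_{y_j}^{y_{j+1}}u^x(x_i,s)\,ds = u^x(x_i,y_{j^+}) + \int K(s)\,\partial_{yy}u^x(x_i,s)\,ds$. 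The Peano kernel $K$ depends only on the vertical segment, not on $i$, and satisfies $\int|K| = O(h^2)$. The $i$-difference of the remainder is then $\int K(s)\big(\partial_{yy}u^x(x_{i+1},s)-\partial_{yy}u^x(x_i,s)\big)ds = O(h^3)\|\mathbf{u}\|_{W^3_\infty}$. Likewise $\delta^x_{i+1,j^+}-\delta^x_{i,j^+} = \frac{(h^y_{j^+})^2}{6}\big(\partial_{yy}u^x(x_{i+1},y_{j^+})-\partial_{yy}u^x(x_i,y_{j^+})\big) = O(h^3)\|\mathbf{u}\|_{W^3_\infty}$. After division by $h^x_{i^+}\gtrsim h$ both contributions are $O(h^2)$. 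Finally, the central difference $\big(u^x(x_{i+1},y_{j^+})-u^x(x_i,y_{j^+})\big)/h^x_{i^+} = \partial_x u^x(x_{i^+},y_{j^+})+O(h^2)\|\mathbf{u}\|_{W^3_\infty}$, since $x_{i^+}$ is the exact midpoint even on non-uniform grids.

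Combining Steps 3 and 4, the $\sigma^{x,x}(x_{i^+},y_{j^+}) = \partial_x u^x(x_{i^+},y_{j^+})$ terms cancel and $R^{\Sigma,x}_{i^+, j^+} = O(h^2)\|\mathbf{u}\|_{W^3_\infty}$. The points needing the most care are the signs of the jumps in Step 2 and the uniformity-in-$i$ argument in Step 4.
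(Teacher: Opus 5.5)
Your proof is correct: the jump signs in Step 2 are right, the closed form for $R^{\Sigma,x}_{i^+,j^+}$ agrees with the paper's residual, and the Peano-kernel argument, uniform in $i$, is valid. You take a somewhat different route from the paper, though.

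The paper's key first step is to subtract the continuous identity $(\boldsymbol{\sigma}, \boldsymbol{\phi}^{\Sigma,x}_{e_{i^+,j^+}}) - B^*_h(\mathbf{u}, \boldsymbol{\phi}^{\Sigma,x}_{e_{i^+,j^+}}) = 0$. The residual then becomes a sum of three pieces:
\begin{itemize}
\item the cell integrals over $T_{i^+,j^+}$ of the interpolation errors $\tilde\sigma^{n,x}_h - \sigma^{n,x}$ and $\tilde\sigma^{x,y}_h - \sigma^{x,y}$, each $O(h^4)$ by Lemma~\ref{lem: projection error} and the midpoint rule;
\item the edge terms $\int_{e_{i,j^+}} (\tilde u^x_h - u^x)\,ds - \int_{e_{i+1,j^+}} (\tilde u^x_h - u^x)\,ds$.
\end{itemize}
Because $\Pi^U_h$ reproduces edge integrals exactly, the edge terms collapse to $h^y_{j^+}(\delta^x_{i+1,j^+} - \delta^x_{i,j^+}) = O(h^4)$. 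So the obstacle you identify in Step 4 never arises there: no difference quotient of $\Pi^U_h \mathbf{u}$ is ever estimated.

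Your pointwise comparison buys two things. Step 3 needs no quadrature, since you compare with point values at the cell center. You also get an explicit expression for $R^{\Sigma,x}_{i^+,j^+}$ as the cell average of $\tilde\sigma^{x,x}_h$ minus $d_x \tilde u^x_{i^+,j^+}$, which mirrors the averaging identity in the appendix. The price is Step 4.

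That step can be shortened. Note that $h^x_{i^+} h^y_{j^+}\, d_x (\Pi^U_h \mathbf{u})^x_{i^+,j^+} = \int_{e_{i+1,j^+}} u^x\,ds - \int_{e_{i,j^+}} u^x\,ds = \int_{T_{i^+,j^+}} \partial_x u^x \,dx\,dy$. Hence $d_x (\Pi^U_h \mathbf{u})^x_{i^+,j^+}$ is the cell average of $\partial_x u^x$, which is second-order accurate at $(x_{i^+}, y_{j^+})$ by the midpoint rule. This identity is exactly the consistency property the paper exploits, and it replaces both your Peano kernel and your central-difference estimate. Only the $O(h^3)$ difference of the $\delta^x$ corrections then remains, which you already handle.
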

\begin{proof}
    We only show the proof of \eqref{eq: truncation error 1.1}, and \eqref{eq: truncation error 1.2} can be proven similarly. For brevity, we scale the basis functions:
    \begin{align*}
        \boldsymbol{\phi}^{\Sigma, x}_{e_{i^+, j^+}} =
        \begin{pmatrix}
            \chi_{T_{i^+, j^+}} & 0 \\
            0 & 0
        \end{pmatrix}, \quad
        \boldsymbol{\phi}^{\Sigma, y}_{e_{i^+, j^+}} =
        \begin{pmatrix}
            0 & 0 \\
            0 & \chi_{T_{i^+, j^+}}
        \end{pmatrix}.
    \end{align*}
    By consistency of the scheme, the left side of \eqref{eq: truncation error 1.1} can be rewritten as
    \begin{align*}
        &(\tilde{\boldsymbol{\sigma}}_h, \boldsymbol{\phi}^{\Sigma, x}_{e_{i^+, j^+}})- B^*_h (\tilde{\mathbf{u}}_h, \boldsymbol{\phi}^{\Sigma, x}_{e_{i^+, j^+}}) \\
        &= (\tilde{\boldsymbol{\sigma}}_h -\boldsymbol{\sigma}, \boldsymbol{\phi}^{\Sigma, x}_{e_{i^+, j^+}}) - B^*_h (\tilde{\mathbf{u}}_h - \mathbf{u}, \boldsymbol{\phi}^{\Sigma, x}_{e_{i^+, j^+}}) \\
        &= \int_{T_{i^+, j^+}} (\tilde{\sigma}^{x, x}_h - \sigma^{x, x}) \, dx \, dy + \int_{e_{i, j^+}} (\tilde{u}^x_h - u^x) \, ds - \int_{e_{i + 1, j^+}} (\tilde{u}^x_h - u^x) \, ds \\
        &= \int_{T_{i^+, j^+}} \frac{(\tilde{\sigma}^{n, x}_h - \sigma^{n, x}) - n^y_{i^+, j^+} (\tilde{\sigma}^{x, y}_h - \sigma^{x, y})}{n^x_{i^+, j^+}} \, dx \, dy \\ &\quad + \int_{e_{i, j^+}} (\tilde{u}^x_h - u^x) \, ds - \int_{e_{i + 1, j^+}} (\tilde{u}^x_h - u^x) \, ds.
    \end{align*}
    Denote the components in the truncation error as follows:
    \begin{align*}
        R^{\Sigma, x}_{i^+, j^+} (\sigma^{n, x}) &= \int_{T_{i^+, j^+}} (\tilde{\sigma}^{n, x}_h - \sigma^{n, x}) \, dx \, dy, \\
        R^{\Sigma, x}_{i^+, j^+} (\sigma^{x, y}) &= \int_{T_{i^+, j^+}} (\tilde{\sigma}^{x, y}_h - \sigma^{x, y}) \, dx \, dy, \\
        R^{\Sigma, x}_{i^+, j^+} (u^x) &= \int_{e_{i, j^+}} (\tilde{u}^x_h - u^x) \, ds - \int_{e_{i + 1, j^+}} (\tilde{u}^x_h - u^x) \, ds.
    \end{align*}
    For the first component,
    \begin{align*}
        R^{\Sigma, x}_{i^+, j^+} (\sigma^{n, x}) &= h^x_{i^+} h^y_{j^+} [ \tilde{\sigma}^{n, x}_{i^+, j^+} - \sigma^{n, x} (x_{i^+}, y_{j^+}) ] + \int_{T_{i^+, j^+}} [ \sigma^{n, x} (x_{i^+}, y_{j^+}) - \sigma^{n, x} ] \, dx \, dy \\
        &= O(h^4) \| \mathbf{u} \|_{W^3_\infty}.
    \end{align*}
    For the second component,
    \begin{align*}
        R^{\Sigma, x}_{i^+, j^+} (\sigma^{x, y}) &= h^x_{i^+} h^y_{j^+} \left[ \frac{1}{2} (\tilde{\sigma}^{x, y}_{i^+, j} - \sigma^{x, y} (x_{i^+}, y_j)) + \frac{1}{2} (\tilde{\sigma}^{x, y}_{i^+, j + 1} - \sigma^{x, y} (x_{i^+}, y_{j + 1})) \right. \\ &\quad + \left. \frac{1}{2} \sigma^{x, y} (x_{i^+}, y_j) + \frac{1}{2} \sigma^{x, y} (x_{i^+}, y_{j + 1}) - \sigma^{x, y} (x_{i^+}, y_{j^+}) \right] \\ &\quad + \int_{T_{i^+, j^+}} [\sigma^{x, y} (x_{i^+}, y_{j^+}) - \sigma^{x, y}] \, dx \, dy \\
        &= O(h^4) \| \mathbf{u} \|_{W^3_\infty}.
    \end{align*}
    For the third component,
    \begin{align*}
        R^{\Sigma, x}_{i^+, j^+} (u^x) = \frac{(h^y_{j^+})^3}{6} [\frac{\partial^2 u^x}{\partial y^2} (x_{i + 1}, y_{j^+}) - \frac{\partial^2 u^x}{\partial y^2} (x_i, y_{j^+})] = O(h^4) \| \mathbf{u} \|_{W^3_\infty}.
    \end{align*}
    The right side of \eqref{eq: truncation error 1.1} equals $h^x_{i^+} h^y_{j^+} R^{\Sigma, x}_{i^+, j^+}$. Comparing the both sides of \eqref{eq: truncation error 1.1} yields
    \begin{align*}
        R^{\Sigma, x}_{i^+, j^+} &= \frac{1}{h^x_{i^+} h^y_{j^+}} \left[ \frac{1}{n^x_{i^+, j^+}} R^{\Sigma, x}_{i^+, j^+} (\sigma^{n, x}) - \frac{n^y_{i^+, j^+}}{n^x_{i^+, j^+}} R^{\Sigma, x}_{i^+, j^+} (\sigma^{x, y}) + R^{\Sigma, x}_{i^+, j^+} (u^x) \right] \\
        &= O(h^2) \| \mathbf{u} \|_{W^3_\infty},
    \end{align*}
    which completes the proof.
\end{proof}

\begin{lemma} \label{lem: truncation error 2}
    Let $\tilde{\boldsymbol{\sigma}}_h$ and $\tilde{\mathbf{u}}_h$ be defined in \eqref{eq: auxiliary variables 1}--\eqref{eq: auxiliary variables 2}. For basis function $\boldsymbol{\phi}^{\Sigma}_e$, it holds that
    \begin{align}
        (\tilde{\boldsymbol{\sigma}}_h, \boldsymbol{\phi}^{\Sigma}_{e_{i^+, j}}) - B^*_h (\tilde{\mathbf{u}}_h, \boldsymbol{\phi}^{\Sigma}_{e_{i^+, j}}) &= (\mathbf{R}^{\Sigma}_{e_{i^+, j}}, \boldsymbol{\phi}^{\Sigma}_{e_{i^+, j}}), \label{eq: truncation error 2.1} \\
        (\tilde{\boldsymbol{\sigma}}_h, \boldsymbol{\phi}^{\Sigma}_{e_{i, j^+}}) - B^*_h (\tilde{\mathbf{u}}_h, \boldsymbol{\phi}^{\Sigma}_{e_{i, j^+}}) &= (\mathbf{R}^{\Sigma}_{e_{i, j^+}}, \boldsymbol{\phi}^{\Sigma}_{e_{i, j^+}}), \label{eq: truncation error 2.2}
    \end{align}
    where
    \begin{align*}
        \mathbf{R}^{\Sigma}_{e_{i^+, j}} =
        \begin{pmatrix}
            0 & R^{\Sigma}_{i^+, j} \chi_{T_{i^+, j}} \\
            0 & 0
        \end{pmatrix}, \quad
        \mathbf{R}^{\Sigma}_{e_{i, j^+}} =
        \begin{pmatrix}
            0 & 0 \\
            R^{\Sigma}_{i, j^+} \chi_{T_{i, j^+}} & 0
        \end{pmatrix},
    \end{align*}
    with $R^{\Sigma}_{i^+, j} = O(h^2) \| \mathbf{u} \|_{W^3_\infty}$ and $R^{\Sigma}_{i, j^+} = O(h^2) \| \mathbf{u} \|_{W^3_\infty}$.
\end{lemma}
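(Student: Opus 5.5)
By consistency of the scheme, exactly as in Lemma~\ref{lem: truncation error 1}, the left side of \eqref{eq: truncation error 2.1} equals $(\tilde{\boldsymbol{\sigma}}_h-\boldsymbol{\sigma},\boldsymbol{\phi}^\Sigma_{e_{i^+,j}})-B^*_h(\tilde{\mathbf{u}}_h-\mathbf{u},\boldsymbol{\phi}^\Sigma_{e_{i^+,j}})$. The right side is $h^x_{i^+}h^y_j R^\Sigma_{i^+,j}$, since $|T_{i^+,j}|=h^x_{i^+}h^y_j$. So it suffices to show that the left side is $O(h^4)\|\mathbf{u}\|_{W^3_\infty}$. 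I only treat \eqref{eq: truncation error 2.1}; \eqref{eq: truncation error 2.2} follows by exchanging the roles of $x$ and $y$.

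\textbf{Step 1: write out the terms.} In $\boldsymbol{\phi}^\Sigma_{e_{i^+,j}}$ the $(x,y)$ entry is $\chi_{T_{i^+,j}}$. The $(x,x)$ entry is supported on $T^-_{i^+,j^+}$ and $T^+_{i^+,j^-}$.
\begin{itemize}
\item In $B^*_h$ only vertical edges contribute, because $\boldsymbol{\tau}\mathbf{n}\cdot\mathbf{n}=\tau^{y,y}=0$ on horizontal edges. The jump of $\tau^{x,x}$ is nonzero only on the vertical edge of each supporting triangle, namely $e_{i,j^+}$ for $T^-_{i^+,j^+}$ and $e_{i+1,j^-}$ for $T^+_{i^+,j^-}$.
\item Since $\Phi^U_e(\Pi^U_h\mathbf{u}-\mathbf{u})=0$, only the correction $\boldsymbol{\delta}_h$ survives there. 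This gives the terms $\pm\frac{h^x_{i^+}}{h^y_{j^\pm}}\cdot\frac{(h^y_{j^\pm})^3}{6}\,\partial_{yy}u^x$, evaluated at $(x_i,y_{j^+})$ and $(x_{i+1},y_{j^-})$ respectively. Each is $O(h^3)$, not $O(h^4)$.
\item The $L^2$ part splits into $\int_{T_{i^+,j}}(\tilde\sigma^{x,y}_h-\sigma^{x,y})$ and $-\frac{h^x_{i^+}}{h^y_{j^\pm}}\int_{T^\mp}(\tilde\sigma^{x,x}_h-\sigma^{x,x})$.
\item On the two triangles I use \eqref{eq: velocity gradient relation} to write $\tilde\sigma^{x,x}_h=(\tilde\sigma^{n,x}_{i^+,j^\pm}-n^y\tilde\sigma^{x,y}_{i^+,j})/n^x$.
\item Lemma~\ref{lem: projection error} then replaces every discrete value by the point value of the exact solution, up to $O(h^2)$. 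After multiplying by an area $O(h^2)$, this costs only $O(h^4)$.
\end{itemize}

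\textbf{Step 2: Taylor expansion and cancellation.} The point values now sit at $(x_{i^+},y_{j^\pm})$ and $(x_{i^+},y_j)$, while the integrals run over triangles whose centroids are not these points. So first-order moment terms of size $O(h^3)$ remain.

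I expand everything to first order about the centre points and compute the triangle centroids explicitly. The lower-left triangle of cell $(i^+,j^+)$ has centroid offset $(-h^x_{i^+}/6,\,-h^y_{j^+}/6)$ from the cell centre. The upper-right triangle of cell $(i^+,j^-)$ has offset $(+h^x_{i^+}/6,\,+h^y_{j^-}/6)$.

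The resulting $O(h^3)$ coefficients are linear combinations of $\partial_x\sigma^{x,y}$, $\partial_y\sigma^{x,y}$, $\partial_x\sigma^{x,x}$, $\partial_y\sigma^{x,x}$ and $\partial_{yy}u^x$. I then use the compatibility identities $\partial_y\sigma^{x,x}=\partial_x\sigma^{x,y}=\partial_{xy}u^x$ and $\partial_y\sigma^{x,y}=\partial_{yy}u^x$. With these, I expect the $O(h^3)$ terms to cancel. This is exactly what the factor $\tfrac16$ in $\boldsymbol{\delta}_h$ is designed to achieve.

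\textbf{Main obstacle.} The hard part is this bookkeeping with non-uniform $h^y_{j^+}\neq h^y_{j^-}$. I will organize it so that the upper half (the triangle $T^-_{i^+,j^+}$, the half of $T_{i^+,j}$ above $y_j$, and the edge $e_{i,j^+}$) cancels on its own, and likewise the lower half, each involving only its own $h^y_{j^\pm}$. This separation also handles boundary edges, where one half is absent.

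If a residual remains, it should be a difference of the same derivative evaluated at points $O(h)$ apart, for example $\partial_{yy}u^x$ at $x_i$ versus $x_{i^+}$. Such a residual is $O(h^4)\|\mathbf{u}\|_{W^3_\infty}$. Dividing by $h^x_{i^+}h^y_j$ and using \eqref{eq: mesh regularity} then gives $R^\Sigma_{i^+,j}=O(h^2)\|\mathbf{u}\|_{W^3_\infty}$.
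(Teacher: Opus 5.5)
Your route is the paper's: consistency, rewriting $\tilde\sigma^{x,x}_h$ through \eqref{eq: velocity gradient relation}, and Taylor-expanding each triangle and edge contribution. However, the step that carries the proof, the cancellation of the $O(h^3)$ terms, is only expected, and the mechanism you propose for it is false. The upper half (the contributions of $T^-_{i^+,j^+}$ and $e_{i,j^+}$) does not cancel on its own. If you do the bookkeeping, the $\partial_{xy}u^x$ terms cancel, and the $\boldsymbol{\delta}_h$ edge term removes the $\frac{1}{6} h^x_{i^+}(h^y_{j^+})^2\partial_{yy}u^x$ coming from $\tilde\sigma^{x,y}_h$. But the upper half still leaves
\[
-(h^x_{i^+})^3\Big(\frac{1}{12}\partial_{xx}u^x+\frac{1}{4}\partial_{yy}u^x\Big)+O(h^4)\|\mathbf{u}\|_{W^3_\infty},
\]
and the lower half leaves exactly the negative. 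The source is that $\tilde\sigma^{x,x}_h$ is only first-order accurate on each triangle. For $\mathbf{u}=(y^2/2,0)$ with $x_i=y_j=0$, one has $\tilde\sigma^{x,y}_{i^+,j}=0$ and $\tilde u^x_{i,j^+}=0$, but $\tilde\sigma^{x,x}_h=h^x_{i^+}/2$ on $T^-_{i^+,j^+}$ although $\sigma^{x,x}=0$. The upper half then equals $-(h^x_{i^+})^3/4$, and the lower half equals $+(h^x_{i^+})^3/4$.

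For interior edges the lemma still holds, for the reason the paper's proof uses implicitly by summing all five components at once. The leftover depends only on $h^x_{i^+}$, which both triangles of $T_{i^+,j}$ share. So the two halves cancel each other up to differences of second derivatives at points $O(h)$ apart, i.e.\ $O(h^4)\|\mathbf{u}\|_{W^3_\infty}$, whatever $h^y_{j^\pm}$ are. Your fallback sentence gives the right conclusion only after the halves are combined; within one half the residual is not a difference of nearby values.

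Consequently, your claim that the separation also handles boundary edges fails. On $e_{i^+,0}$ only $T^-_{i^+,0^+}$ is present, and $\partial_{xx}u^x=0$ on the wall. The left side of \eqref{eq: truncation error 2.1} therefore equals $-\frac{1}{4}(h^x_{i^+})^3\partial_{yy}u^x(x_{i^+},y_0)+O(h^4)$. Dividing by $|T_{i^+,0}|=\frac{1}{2}h^x_{i^+}h^y_{0^+}$ gives $R^\Sigma_{i^+,0}=O(h)$ whenever $\partial_{yy}u^x\neq 0$ on the wall, as for Poiseuille-type flow. Symmetrically, $R^\Sigma_{0,j^+}=O(h)$ when $\partial_{xx}u^y\neq 0$ on a vertical wall, which is the case for the manufactured solution of Section~\ref{sec: 7.1}. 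No rearrangement of the local Taylor bookkeeping gives $O(h^2)$ there.

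The paper writes out only the interior case and says the rest is similar, so it does not settle this either. Covering boundary edges needs an additional idea. One option is a boundary-adapted modification of the auxiliary variables. Another is an argument that an $O(h)$ truncation error confined to a boundary strip still yields $O(h^2)$ errors; the direct bound through $\|\mathbf{R}^\Sigma_h\|_{L^2}$ gives only $O(h^{3/2})$.
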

\begin{proof}
    We only show the proof of \eqref{eq: truncation error 2.1} for $e_{i^+, j} \in \mathring{\mathcal{E}}^H_h$, and the remaining can be proven similarly. By consistency of the scheme, the left side of \eqref{eq: truncation error 2.1} can be rewritten as
    \begin{align*}
        &(\tilde{\boldsymbol{\sigma}}_h, \boldsymbol{\phi}^{\Sigma}_{e_{i^+, j}}) - B^*_h (\tilde{\mathbf{u}}_h, \boldsymbol{\phi}^{\Sigma}_{e_{i^+, j}}) \\
        &= (\tilde{\boldsymbol{\sigma}}_h - \boldsymbol{\sigma}, \boldsymbol{\phi}^{\Sigma}_{e_{i^+, j}}) - B^*_h (\tilde{\mathbf{u}}_h - \mathbf{u}, \boldsymbol{\phi}^{\Sigma}_{e_{i^+, j}}) \\
        &= \int_{T^-_{i^+, j^+}} [ - \frac{h^x_{i^+}}{h^y_{j^+}} (\tilde{\sigma}^{x, x}_h - \sigma^{x, x}) + \tilde{\sigma}^{x, y}_h - \sigma^{x, y} ] \, dx \, dy \\
        &\quad + \int_{T^+_{i^+, j^-}} [ - \frac{h^x_{i^+}}{h^y_{j^-}} (\tilde{\sigma}^{x, x}_h - \sigma^{x, x}) + \tilde{\sigma}^{x, y}_h - \sigma^{x, y} ] \, dx \, dy \\
        &\quad - \int_{e_{i, j^+}} \frac{h^x_{i^+}}{h^y_{j^+}} (\tilde{u}^x_h - u^x) \, ds + \int_{e_{i + 1, j^-}} \frac{h^x_{i^+}}{h^y_{j^-}} (\tilde{u}^x_h - u^x) \, ds \\
        &= \int_{T^-_{i^+, j^+}} \left[ - \frac{h^x_{i^+}}{h^y_{j^+}} \cdot \frac{(\tilde{\sigma}^{n, x}_h - \sigma^{n, x}) - n^y_{i^+, j^+} (\tilde{\sigma}^{x, y}_h - \sigma^{x, y})}{n^x_{i^+, j^+}} + \tilde{\sigma}^{x, y}_h - \sigma^{x, y} \right] \, dx \, dy \\
        &\quad + \int_{T^+_{i^+, j^-}} \left[ - \frac{h^x_{i^+}}{h^y_{j^-}} \cdot \frac{(\tilde{\sigma}^{n, x}_h - \sigma^{n, x}) - n^y_{i^+, j^-} (\tilde{\sigma}^{x, y}_h - \sigma^{x, y})}{n^x_{i^+, j^-}} + \tilde{\sigma}^{x, y}_h - \sigma^{x, y} \right] \, dx \, dy \\
        &\quad - \int_{e_{i, j^+}} \frac{h^x_{i^+}}{h^y_{j^+}} (\tilde{u}^x_h - u^x) \, ds + \int_{e_{i + 1, j^-}} \frac{h^x_{i^+}}{h^y_{j^-}} (\tilde{u}^x_h - u^x) \, ds.
    \end{align*}
    Denote the components in the truncation error as follows:
    \begin{align*}
        R^{\Sigma, +}_{i^+, j} (\sigma^{n, x}) &= \int_{T^-_{i^+, j^+}} (\tilde{\sigma}^{n, x}_h - \sigma^{n, x}) \, dx \, dy, \quad
        R^{\Sigma, -}_{i^+, j} (\sigma^{n, x}) = \int_{T^+_{i^+, j^-}} (\tilde{\sigma}^{n, x}_h - \sigma^{n, x}) \, dx \, dy, \\
        R^{\Sigma, +}_{i^+, j} (\sigma^{x, y}) &= \int_{T^-_{i^+, j^+}} (\tilde{\sigma}^{x, y}_h - \sigma^{x, y}) \, dx \, dy, \quad
        R^{\Sigma, -}_{i^+, j} (\sigma^{x, y}) = \int_{T^+_{i^+, j^-}} (\tilde{\sigma}^{x, y}_h - \sigma^{x, y}) \, dx \, dy, \\
        R^{\Sigma}_{i^+, j} (u^x) &= - \int_{e_{i, j^+}} \frac{h^x_{i^+}}{h^y_{j^+}} (\tilde{u}^x_h - u^x) \, ds + \int_{e_{i + 1, j^-}} \frac{h^x_{i^+}}{h^y_{j^-}} (\tilde{u}^x_h - u^x) \, ds.
    \end{align*}
    For the first component,
    \begin{align*}
        R^{\Sigma, +}_{i^+, j} (\sigma^{n, x}) &= \frac{h^x_{i^+} h^y_{j^+}}{2} [ \tilde{\sigma}^{n, x}_{i^+, j^+} - \sigma^{n, x} (x_{i^+}, y_{j^+}) ] + \int_{T^-_{i^+, j^+}} [ \sigma^{n, x} (x_{i^+}, y_{j^+}) - \sigma^{n, x} ] \, dx \, dy \\
        &= \frac{(h^x_{i^+})^2 h^y_{j^+}}{12} \cdot \frac{\partial \sigma^{n, x}}{\partial x} (x_{i^+}, y_{j^+}) + \frac{h^x_{i^+} (h^y_{j^+})^2}{12} \cdot \frac{\partial \sigma^{n, x}}{\partial y} (x_{i^+}, y_{j^+}) + O(h^4) \| \mathbf{u} \|_{W^3_\infty} \\
        &= \frac{(h^x_{i^+})^2 h^y_{j^+}}{12} [ n^x_{i^+, j^+} \frac{\partial^2 u^x}{\partial x^2} (x_{i^+}, y_{j^+}) + n^y_{i^+, j^+} \frac{\partial^2 u^x}{\partial x \partial y} (x_{i^+}, y_{j^+}) ] \\
        & \quad + \frac{h^x_{i^+} (h^y_{j^+})^2}{12} [ n^x_{i^+,j^+} \frac{\partial^2 u^x}{\partial y \partial x} (x_{i^+}, y_{j^+}) + n^y_{i^+, j^+} \frac{\partial^2 u^x}{\partial y^2} (x_{i^+}, y_{j^+}) ] + O(h^4) \| \mathbf{u} \|_{W^3_\infty}.
    \end{align*}
    Similarly, for the second component,
    \begin{align*}
        R^{\Sigma, -}_{i^+, j} (\sigma^{n, x}) &= - \frac{(h^x_{i^+})^2 h^y_{j^-}}{12} [ n^x_{i^+, j^-} \frac{\partial^2 u^x}{\partial x^2} (x_{i^+}, y_{j^-}) + n^y_{i^+, j^-} \frac{\partial^2 u^x}{\partial x \partial y} (x_{i^+}, y_{j^-}) ] \\
        &\quad - \frac{h^x_{i^+} (h^y_{j^-})^2}{12} [ n^x_{i^+, j^-} \frac{\partial^2 u^x}{\partial y \partial x} (x_{i^+}, y_{j^-}) + n^y_{i^+, j^-} \frac{\partial^2 u^x}{\partial y^2} (x_{i^+}, y_{j^-}) ] + O(h^4) \| \mathbf{u} \|_{W^3_\infty}.
    \end{align*}
    For the third component,
    \begin{align*}
        R^{\Sigma, +}_{i^+, j} (\sigma^{x, y}) &= \frac{h^x_{i^+} h^y_{j^+}}{2} [ \tilde{\sigma}^{x, y}_{i^+, j} - \sigma^{x, y} (x_{i^+}, y_j) ] + \int_{T^-_{i^+, j^+}} [ \sigma^{x, y} (x_{i^+}, y_j) - \sigma^{x, y} ] \, dx \, dy \\
        &= \frac{(h^x_{i^+})^2 h^y_{j^+}}{12} \cdot \frac{\partial^2 u^x}{\partial x \partial y} (x_{i^+}, y_j) - \frac{h^x_{i^+} (h^y_{j^+})^2}{6} \cdot \frac{\partial^2 u^x}{\partial y^2} (x_{i^+}, y_j) + O(h^4) \| \mathbf{u} \|_{W^3_\infty}.
    \end{align*}
    Similarly, for the fourth component,
    \begin{align*}
        R^{\Sigma, -}_{i^+, j} (\sigma^{x, y}) = - \frac{(h^x_{i^+})^2 h^y_{j^-}}{12} \cdot \frac{\partial^2 u^x}{\partial x \partial y} (x_{i^+}, y_{j^-}) + \frac{h^x_{i^+} (h^y_{j^-})^2}{6} \cdot \frac{\partial^2 u^x}{\partial y^2} (x_{i^+}, y_{j^-}) + O(h^4) \| \mathbf{u} \|_{W^3_\infty}.
    \end{align*}
    For the fifth component,
    \begin{align*}
        R^{\Sigma}_{i^+, j} (u^x) = \frac{h^x_{i^+} (h^y_{j^+})^2}{6} \cdot \frac{\partial^2 u^x}{\partial y^2} (x_i, y_{j^+}) - \frac{h^x_{i^+} (h^y_{j^-})^2}{6} \cdot \frac{\partial^2 u^x}{\partial y^2} (x_{i + 1}, y_{j^-}).
    \end{align*}
    The right side of \eqref{eq: truncation error 2.1} equals $h^x_{i^+} h^y_j R^{\Sigma}_{i^+, j}$. Comparing the both sides of \eqref{eq: truncation error 2.1} yields
    \begin{align*}
        R^{\Sigma}_{i^+, j} &= \frac{1}{h^x_{i^+} h^y_j} \left[ - \frac{h^x_{i^+}}{h^y_{j^+} n^x_{i^+, j^+}} R^{\Sigma, +}_{i^+, j} (\sigma^{n, x}) + (\frac{h^x_{i^+} n^y_{i^+, j^+}}{h^y_{j^+} n^x_{i^+, j^+}} + 1) R^{\Sigma, +}_{i^+, j} (\sigma^{x, y}) \right. \\
        &\quad \left. - \frac{h^x_{i^+}}{h^y_{j^-} n^x_{i^+, j^-}} R^{\Sigma, -}_{i^+, j} (\sigma^{n, x}) + (\frac{h^x_{i^+} n^y_{i^+, j^-}}{h^y_{j^-} n^x_{i^+, j^-}} + 1) R^{\Sigma, -}_{i^+, j} (\sigma^{x, y}) + R^{\Sigma}_{i^+, j} (u^x) \right] \\
        & = O(h^2) \| \mathbf{u} \|_{W^3_\infty},
    \end{align*}
    which completes the proof.
\end{proof}

\begin{lemma} \label{lem: truncation error 3}
    Let $\tilde{\mathbf{u}}_h$ be defined in \eqref{eq: auxiliary variables 1}. For basis function $\phi^P_e$, it holds that
    \begin{align}
        b_h (\tilde{\mathbf{u}}_h, \phi^P_{e_{i^+, j^+}}) = (R^P_{e_{i^+, j^+}}, \phi^P_{e_{i^+, j^+}}), \label{eq: truncation error 3}
    \end{align}
    where
    \begin{align*}
        R^P_{e_{i^+, j^+}} = R^P_{i^+, j^+} \chi_{T_{i^+, j^+}},
    \end{align*}
    with $R^P_{i^+, j^+} = O(h^2) \| \mathbf{u} \|_{W^3_\infty}$.
\end{lemma}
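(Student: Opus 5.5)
Since $\phi^P_{e_{i^+,j^+}} = \chi_{T_{i^+,j^+}}$, the right side of \eqref{eq: truncation error 3} equals $h^x_{i^+} h^y_{j^+} R^P_{i^+,j^+}$. It therefore suffices to compute $b_h(\tilde{\mathbf{u}}_h, \phi^P_{e_{i^+,j^+}})$ and show that it is $O(h^4)\|\mathbf{u}\|_{W^3_\infty}$.

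\emph{Reduction to the correction term.} Write $\tilde{\mathbf{u}}_h = \Pi^U_h \mathbf{u} - \boldsymbol{\delta}_h$. By the compatibility $b_h(\mathbf{u} - \Pi^U_h \mathbf{u}, q_h) = 0$ and the consistency $b_h(\mathbf{u}, q_h) = (\nabla\cdot\mathbf{u}, q_h) = 0$, the interpolant contributes nothing. (This holds for every $q_h \in P_h$, including the basis function itself, not only for $P_{h,0}$.) Hence
\begin{align*}
    b_h(\tilde{\mathbf{u}}_h, \phi^P_{e_{i^+,j^+}}) = - b_h(\boldsymbol{\delta}_h, \phi^P_{e_{i^+,j^+}}).
\end{align*}

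\emph{Explicit evaluation.} The jump of $\phi^P_{e_{i^+,j^+}}$ is nonzero only on the four horizontal and vertical edges bounding $T_{i^+,j^+}$, namely $e_{i,j^+}$, $e_{i+1,j^+}$, $e_{i^+,j}$ and $e_{i^+,j+1}$. With the normal orientations fixed in Section~\ref{sec 4}, this gives, exactly as in \eqref{eq: condensed SDG 5},
\begin{align*}
    b_h(\boldsymbol{\delta}_h, \phi^P_{e_{i^+,j^+}}) = h^y_{j^+}(\delta^x_{i+1,j^+} - \delta^x_{i,j^+}) + h^x_{i^+}(\delta^y_{i^+,j+1} - \delta^y_{i^+,j}),
\end{align*}
up to a global sign that is irrelevant here. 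On a boundary edge the normal velocity vanishes, and the formula still holds with the corresponding $\delta$ values, since $\partial^2_y u^x = 0$ on $x = x_0, x_{n_x}$ when $u^x = 0$ there.

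\emph{Bounding the differences.} Each difference is bounded by the mean value theorem, using $\mathbf{u} \in W^3_\infty$:
\begin{align*}
    \delta^x_{i+1,j^+} - \delta^x_{i,j^+} = \frac{(h^y_{j^+})^2}{6}\Big[\frac{\partial^2 u^x}{\partial y^2}(x_{i+1},y_{j^+}) - \frac{\partial^2 u^x}{\partial y^2}(x_i,y_{j^+})\Big] = O(h^3)\|\mathbf{u}\|_{W^3_\infty}.
\end{align*}
The key point is that both values share the same $h^y_{j^+}$ factor, so no mesh-ratio non-uniformity enters. The $y$-term is handled the same way, because $\delta^y_{i^+,j}$ and $\delta^y_{i^+,j+1}$ both carry $(h^x_{i^+})^2$. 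Multiplying by the edge lengths then gives $O(h^4)$, and dividing by $h^x_{i^+} h^y_{j^+} \gtrsim h^2$, which follows from the regularity assumption \eqref{eq: mesh regularity}, yields $R^P_{i^+,j^+} = O(h^2)\|\mathbf{u}\|_{W^3_\infty}$.

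The only delicate step is confirming that the jumps of $\phi^P$ across the four edges produce exactly this signed pattern, and that boundary edges are handled consistently. Once that is checked, the rest is a routine mean value estimate.
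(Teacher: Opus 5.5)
Your proof is correct and follows the paper's argument. The paper likewise writes $b_h(\tilde{\mathbf u}_h,\phi^P_{e_{i^+,j^+}}) = b_h(\tilde{\mathbf u}_h-\mathbf u,\phi^P_{e_{i^+,j^+}})$ and uses the edge-mean property of $\Pi^U_h$, so that only $-\boldsymbol{\delta}_h$ survives on the four edges of $T_{i^+,j^+}$; it then bounds the resulting differences of $\partial^2_y u^x$ and $\partial^2_x u^y$ by $O(h^4)\|\mathbf u\|_{W^3_\infty}$ before dividing by the cell area (as a minor aside, the mesh-regularity assumption is not actually needed in that last step, because the mean-value increment carries the factor $h^x_{i^+}$, resp.\ $h^y_{j^+}$, which cancels exactly against $h^x_{i^+}h^y_{j^+}$).
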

\begin{proof}
    Estimate the left side of \eqref{eq: truncation error 3}:
    \begin{align*}
        &b_h (\tilde{\mathbf{u}}_h, \phi^P_{e_{i^+, j^+}}) = b_h (\tilde{\mathbf{u}}_h - \mathbf{u}, \phi^P_{e_{i^+, j^+}}) \\
        &= \int_{e_{i + 1, j^+}} (\tilde{u}^x_h - u^x) \, ds - \int_{e_{i, j^+}} (\tilde{u}^x_h - u^x) \, ds + \int_{e_{i^+, j + 1}} (\tilde{u}^y_h - u^y) \, ds - \int_{e_{i^+, j}} (\tilde{u}^y_h - u^y) \, ds \\
        &= - \frac{(h^y_{j^+})^3}{6} [ \frac{\partial^2 u^x}{\partial y^2} (x_{i + 1, j^+}) - \frac{\partial^2 u^x}{\partial y^2} (x_{i, j^+}) ] - \frac{(h^x_{i^+})^3}{6} [ \frac{\partial^2 u^y}{\partial x^2} (x_{i^+, j + 1}) - \frac{\partial^2 u^y}{\partial x^2} (x_{i^+, j}) ] \\
        &= O(h^4) \| \mathbf{u} \|_{W^3_\infty}.
    \end{align*}
    The right side of \eqref{eq: truncation error 3} equals $h^x_{i^+} h^y_{j^+} R^P_{e_{i^+, j^+}}$, and thus we have $R^P_{i^+, j^+} = O(h^2) \| \mathbf{u} \|_{W^3_\infty}$.
\end{proof}

By Lemma \ref{lem: truncation error 1}, Lemma \ref{lem: truncation error 2} and Lemma \ref{lem: truncation error 3}, the auxiliary variables satisfy the following equations:
\begin{align}
    (\tilde{\boldsymbol{\sigma}}_h, \boldsymbol{\tau}_h) - B^*_h (\tilde{\mathbf{u}}_h, \boldsymbol{\tau}_h) &= (\mathbf{R}^\Sigma_h, \boldsymbol{\tau}_h), & &\forall \boldsymbol{\tau}_h \in \Sigma_h, \label{eq: auxiliary equation 1} \\
    \nu B_h (\tilde{\boldsymbol{\sigma}}_h, \mathbf{v}_h) - b^*_h (\tilde{p}_h, \mathbf{v}_h) &= (\mathbf{f}, \mathbf{v}_h), & &\forall \mathbf{v}_h \in U_{h, 0}, \label{eq: auxiliary equation 2} \\
    b_h (\tilde{\mathbf{u}}_h, q_h) &= (R^P_h, q_h), & &\forall q_h \in P_{h, 0}, \label{eq: auxiliary equation 3}
\end{align}
where
\begin{align*}
    \mathbf{R}^\Sigma_h &= \sum_{e \in \mathcal{E}^H_h \cup \mathcal{E}^V_h} R^\Sigma_e + \sum_{e \in \mathcal{E}^D_h} (R^{\Sigma, x}_e + R^{\Sigma, y}_e) \\
    &=
    \begin{cases}
        \begin{pmatrix}
            R^{\Sigma, x}_{i^+, j^+} & R^{\Sigma}_{i^+, j} \\
            R^{\Sigma}_{i, j^+} & R^{\Sigma, y}_{i^+, j^+}
        \end{pmatrix}
        & \text{in } T^-_{i^+, j^+}, \\
        \begin{pmatrix}
            R^{\Sigma, x}_{i^+, j^+} & R^{\Sigma}_{i^+, j + 1} \\
            R^{\Sigma}_{i + 1, j^+} & R^{\Sigma, y}_{i^+, j^+}
        \end{pmatrix}
        & \text{in } T^+_{i^+, j^+},
    \end{cases}
\end{align*}
and
\begin{align*}
    R^P_h &= \sum_{e \in \mathcal{E}^D_h} R^P_e = R^P_{i^+, j^+} \quad \text{in } T_{i^+, j^+}.
\end{align*}
It is evident that $\| \mathbf{R}^\Sigma_h \|_{L^2} = O(h^2) \| \mathbf{u} \|_{W^3_\infty}$ and $\| R^P_h \|_{L^2} = O(h^2) \| \mathbf{u} \|_{W^3_\infty}$. Then, we have the following error estimates between the auxiliary variables and the numerical solutions.
\begin{lemma} \label{lem: auxiliary error}
    Let $(\boldsymbol{\sigma}_h, \mathbf{u}_h, p_h)$ be the solution of \eqref{eq: SDG 1}--\eqref{eq: SDG 3} and $(\tilde{\boldsymbol{\sigma}}_h, \tilde{\mathbf{u}}_h, \tilde{p}_h)$ be defined in \eqref{eq: auxiliary variables 1}--\eqref{eq: auxiliary variables 3}. Then, the following estimates hold:
    \begin{align}
        \| \tilde{\boldsymbol{\sigma}}_h - \boldsymbol{\sigma}_h \|_{L^2} &\lesssim h^2 \| \mathbf{u} \|_{W^3_\infty}, \label{eq: auxiliary error 1} \\
        \| \tilde{\mathbf{u}}_h - \mathbf{u}_h \|_{H^1, h} &\lesssim h^2 \| \mathbf{u} \|_{W^3_\infty}, \label{eq: auxiliary error 2} \\
        \| \tilde{p}_h - p_h \|_{L^2} &\lesssim \nu h^2 \| \mathbf{u} \|_{W^3_\infty}. \label{eq: auxiliary error 3}
    \end{align}
\end{lemma}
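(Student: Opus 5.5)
Subtract the scheme \eqref{eq: SDG 1}--\eqref{eq: SDG 3} from the auxiliary equations \eqref{eq: auxiliary equation 1}--\eqref{eq: auxiliary equation 3}. Writing $\boldsymbol{\eta}_h = \tilde{\boldsymbol{\sigma}}_h - \boldsymbol{\sigma}_h \in \Sigma_h$, $\mathbf{e}_h = \tilde{\mathbf{u}}_h - \mathbf{u}_h \in U_{h,0}$ and $\xi_h = \tilde{p}_h - p_h \in P_{h,0}$, the error equations are
\begin{align*}
    (\boldsymbol{\eta}_h, \boldsymbol{\tau}_h) - B^*_h (\mathbf{e}_h, \boldsymbol{\tau}_h) &= (\mathbf{R}^\Sigma_h, \boldsymbol{\tau}_h), & &\forall \boldsymbol{\tau}_h \in \Sigma_h, \\
    \nu B_h (\boldsymbol{\eta}_h, \mathbf{v}_h) - b^*_h (\xi_h, \mathbf{v}_h) &= 0, & &\forall \mathbf{v}_h \in U_{h, 0}, \\
    b_h (\mathbf{e}_h, q_h) &= (R^P_h, q_h), & &\forall q_h \in P_{h, 0}.
\end{align*}
The momentum error equation has \emph{zero} right-hand side, because $\tilde{p}_h$ differs from $\Pi^P_h p$ only by a constant and the test functions vanish in normal component on $\partial\Omega$. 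Two facts are used here: the compatibility $b^*_h(p-\Pi^P_h p,\cdot)=0$, and the observation that a constant $c$ gives $b^*_h(c,\mathbf{v}_h)=b_h(\mathbf{v}_h,c)=0$ for $\mathbf{v}_h \in U_{h,0}$. The latter holds since $b_h(\mathbf{v}_h,1)$ only involves jumps of a constant together with boundary terms $\mathbf{v}_h\cdot\mathbf{n}=0$. This zero right-hand side is the structural source of pressure robustness: $\nu$ never multiplies a pressure-dependent residual.

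First handle the nonzero divergence residual. Since $R^P_h$ is a $P_h$-function, I would check whether it has zero mean; summing Lemma~\ref{lem: truncation error 3} over all cells telescopes to boundary terms of $\tilde{\mathbf{u}}_h\cdot\mathbf{n}$, which vanish, so $R^P_h\in P_{h,0}$. By Lemma~\ref{lem: inf-sup condition 2} (a discrete right inverse of $b_h$), there is $\mathbf{w}_h \in U_{h,0}$ with $b_h(\mathbf{w}_h,q_h)=(R^P_h,q_h)$ for all $q_h$ and $\|\mathbf{w}_h\|_{H^1,h}\lesssim \|R^P_h\|_{L^2}\lesssim h^2\|\mathbf{u}\|_{W^3_\infty}$. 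Set $\mathbf{e}^0_h=\mathbf{e}_h-\mathbf{w}_h$, which is discretely divergence-free.

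Next comes the energy estimate. Test the first error equation with $\boldsymbol{\tau}_h=\boldsymbol{\eta}_h$ and the second with $\mathbf{v}_h=\mathbf{e}^0_h$. The pressure term drops because $b^*_h(\xi_h,\mathbf{e}^0_h)=b_h(\mathbf{e}^0_h,\xi_h)=0$, and adjointness gives $B_h(\boldsymbol{\eta}_h,\mathbf{e}^0_h)=B^*_h(\mathbf{e}^0_h,\boldsymbol{\eta}_h)$. Combining,
\[
\|\boldsymbol{\eta}_h\|^2_{L^2} = (\mathbf{R}^\Sigma_h,\boldsymbol{\eta}_h) + B^*_h(\mathbf{w}_h,\boldsymbol{\eta}_h) \lesssim \big(\|\mathbf{R}^\Sigma_h\|_{L^2} + \|\mathbf{w}_h\|_{H^1,h}\big)\|\boldsymbol{\eta}_h\|_{L^2},
\]
using the continuity of $B_h$ and the norm equivalence \eqref{eq: norm equivalent}. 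This yields \eqref{eq: auxiliary error 1}, independently of $\nu$ and $p$.

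Then \eqref{eq: auxiliary error 2} follows from Lemma~\ref{lem: inf-sup condition 1}. The first error equation gives $B_h(\boldsymbol{\tau}_h,\mathbf{e}_h)=(\boldsymbol{\eta}_h-\mathbf{R}^\Sigma_h,\boldsymbol{\tau}_h)$, so $\|\mathbf{e}_h\|_{H^1,h}\lesssim \|\boldsymbol{\eta}_h\|_{L^2}+\|\mathbf{R}^\Sigma_h\|_{L^2}$. Finally, for \eqref{eq: auxiliary error 3}, Lemma~\ref{lem: inf-sup condition 2} together with the second error equation gives
\[
\beta\|\xi_h\|_{L^2}\le \sup_{\mathbf{v}_h} \frac{b_h(\mathbf{v}_h,\xi_h)}{\|\mathbf{v}_h\|_{H^1,h}} = \sup_{\mathbf{v}_h} \frac{\nu B_h(\boldsymbol{\eta}_h,\mathbf{v}_h)}{\|\mathbf{v}_h\|_{H^1,h}} \lesssim \nu\|\boldsymbol{\eta}_h\|_{L^2},
\]
which is the $\nu h^2$ bound.

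The main obstacle is the lifting step. I need to confirm that $R^P_h$ has zero mean (or else test only against $P_{h,0}$ and argue via projection) and that the inf-sup constant of Lemma~\ref{lem: inf-sup condition 2} yields a bounded right inverse in $\|\cdot\|_{H^1,h}$. Everything else is standard saddle-point algebra. The essential point is that the truncation residuals appear only in the first and third equations, with $O(h^2)$ bounds supplied by Lemmas~\ref{lem: truncation error 1}--\ref{lem: truncation error 3}.
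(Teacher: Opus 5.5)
Your proof is correct, but it handles the divergence residual $R^P_h$ differently from the paper.

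\textbf{The paper's route.} It never lifts $R^P_h$. It tests the error equations with $\nu\boldsymbol{\eta}_h$, $\mathbf{e}_h$ and $\xi_h$ to get the identity
\[
\nu\|\boldsymbol{\eta}_h\|^2_{L^2}=\nu(\mathbf{R}^\Sigma_h,\boldsymbol{\eta}_h)+(R^P_h,\xi_h).
\]
It then derives $\|\xi_h\|_{L^2}\lesssim\nu\|\boldsymbol{\eta}_h\|_{L^2}$ from Lemma~\ref{lem: inf-sup condition 2} and the momentum error equation, which is exactly your last step. Substituting this into the identity cancels $\nu$ and gives $\|\boldsymbol{\eta}_h\|_{L^2}\lesssim\|\mathbf{R}^\Sigma_h\|_{L^2}+\|R^P_h\|_{L^2}$.

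\textbf{Your route.} You turn the inf-sup condition into a bounded right inverse $\mathbf{w}_h$ and test with the discretely divergence-free $\mathbf{e}_h-\mathbf{w}_h$. The velocity-gradient estimate is then obtained without the pressure ever appearing, which makes pressure robustness structurally visible. The paper's version is shorter and uses the inf-sup condition only in its ``sup'' form.

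\textbf{The step you flagged.} The lifting you call the main obstacle is standard. In finite dimensions, the discrete inf-sup condition is equivalent to $b_h$ mapping $U_{h,0}$ onto the dual of $P_{h,0}$, with a right inverse of norm at most $\beta^{-1}$. To construct it, take the minimal-norm preimage. It is orthogonal to the discrete kernel in the inner product inducing $\|\cdot\|_{H^1,h}$. This is a genuine norm on $U_{h,0}$ by Lemma~\ref{lem: norm relation}.

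You also do not need $R^P_h$ to have zero mean, although your telescoping argument for it is correct. The functional $q_h\mapsto(R^P_h,q_h)$ is only evaluated on $P_{h,0}$, and there it is bounded by $\|R^P_h\|_{L^2}$ regardless.
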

\begin{proof}
    Subtracting \eqref{eq: auxiliary equation 1}-\eqref{eq: auxiliary equation 3} from \eqref{eq: SDG 1}-\eqref{eq: SDG 3} gives
    \begin{align*}
        (\tilde{\boldsymbol{\sigma}}_h - \boldsymbol{\sigma}_h, \boldsymbol{\tau}_h) - B^*_h (\tilde{\mathbf{u}}_h - \mathbf{u}_h, \boldsymbol{\tau}_h) &= (\mathbf{R}^\Sigma_h, \boldsymbol{\tau}_h), & &\forall \boldsymbol{\tau}_h \in \Sigma_h, \\
        \nu B_h (\tilde{\boldsymbol{\sigma}}_h - \boldsymbol{\sigma}_h, \mathbf{v}_h) - b^*_h (\tilde{p}_h - p_h, \mathbf{v}_h) &= 0, & &\forall \mathbf{v}_h \in U_{h, 0}, \\
        b_h (\tilde{\mathbf{u}}_h - \mathbf{u}_h, q_h) &= (R^P_h, q_h), & &\forall q_h \in P_{h, 0}.
    \end{align*}
    Taking $\boldsymbol{\tau}_h = \nu (\tilde{\boldsymbol{\sigma}}_h - \boldsymbol{\sigma}_h)$, $\mathbf{v}_h = \tilde{\mathbf{u}}_h - \mathbf{u}_h$, $q_h = \tilde{p}_h - p_h$ and summing the above equations yields
    \begin{align} \label{proof: auxiliary error 1}
        \nu \| \tilde{\boldsymbol{\sigma}}_h - \boldsymbol{\sigma}_h \|_{L^2}^2 = \nu (\mathbf{R}^\Sigma_h, \tilde{\boldsymbol{\sigma}}_h - \boldsymbol{\sigma}_h) + (R^P_h, \tilde{p}_h - p_h).
    \end{align}
    By Lemma \ref{lem: inf-sup condition 2},
    \begin{equation} \label{proof: auxiliary error 2}
        \begin{aligned}
            \| \tilde{p}_h - p_h \|_{L^2} &\lesssim \sup_{\mathbf{v}_h \in U_{h, 0}} \frac{b_h (\mathbf{v}_h, \tilde{p}_h - p_h)}{\| \mathbf{v}_h \|_{H^1, h}} = \sup_{\mathbf{v}_h \in U_{h, 0}} \frac{b^*_h (\tilde{p}_h - p_h, \mathbf{v}_h)}{\| \mathbf{v}_h \|_{H^1, h}} \\
            &= \sup_{\mathbf{v}_h \in U_{h, 0}} \frac{\nu B_h (\tilde{\boldsymbol{\sigma}}_h - \boldsymbol{\sigma}_h, \mathbf{v}_h)}{\| \mathbf{v}_h \|_{H^1, h}} \lesssim \nu \| \tilde{\boldsymbol{\sigma}}_h - \boldsymbol{\sigma}_h \|_{L^2}.
        \end{aligned}
    \end{equation}
    Substituting \eqref{proof: auxiliary error 2} into \eqref{proof: auxiliary error 1} gives
    \begin{align*}
        \| \tilde{\boldsymbol{\sigma}}_h - \boldsymbol{\sigma}_h \|_{L^2} &\lesssim \| \mathbf{R}^\Sigma_h \|_{L^2} + \| R^P_h \|_{L^2} \lesssim h^2 \| \mathbf{u} \|_{W^3_\infty}.
    \end{align*}
    Using \eqref{proof: auxiliary error 2} again derives \eqref{eq: auxiliary error 3}. By Lemma \ref{lem: inf-sup condition 1},
    \begin{align*}
        \| \tilde{\mathbf{u}}_h - \mathbf{u}_h \|_{H^1, h} &\lesssim \sup_{\boldsymbol{\tau}_h \in \Sigma_h} \frac{B_h (\boldsymbol{\tau}_h, \tilde{\boldsymbol{\sigma}}_h - \boldsymbol{\sigma}_h)}{\| \boldsymbol{\tau}_h \|_{L^2}} = \sup_{\boldsymbol{\tau}_h \in \Sigma_h} \frac{B^*_h (\tilde{\boldsymbol{\sigma}}_h - \boldsymbol{\sigma}_h, \boldsymbol{\tau}_h)}{\| \boldsymbol{\tau}_h \|_{L^2}} \\
        &= \sup_{\boldsymbol{\tau}_h \in \Sigma_h} \frac{(\tilde{\boldsymbol{\sigma}}_h - \boldsymbol{\sigma}_h, \boldsymbol{\tau}_h) - (\mathbf{R}^\Sigma_h, \boldsymbol{\tau}_h)}{\| \boldsymbol{\tau}_h \|_{L^2}} \lesssim \| \tilde{\boldsymbol{\sigma}}_h - \boldsymbol{\sigma}_h \|_{L^2} + \| \mathbf{R}^\Sigma_h \|_{L^2},
    \end{align*}
    which implies \eqref{eq: auxiliary error 2}.
\end{proof}

Now we are ready to present the main error estimates.
\begin{theorem}
    Let $(\boldsymbol{\sigma}, \mathbf{u}, p)$ be the solution of \eqref{eq: Stokes 2.1}--\eqref{eq: Stokes 2.3}, $(\boldsymbol{\sigma}_h, \mathbf{u}_h, p_h)$ be the solution of \eqref{eq: SDG 1}--\eqref{eq: SDG 3}, and $\breve{\boldsymbol{\sigma}}_h$ be defined in \eqref{eq: reconstructed diagonal velocity gradient}--\eqref{eq: reconstructed velocity gradient}. Then, the following error estimates hold:
    \begin{align*}
        \| \boldsymbol{\sigma} - \breve{\boldsymbol{\sigma}}_h \|_{l^2} &\lesssim h^2 \| \mathbf{u} \|_{W^3_\infty}, \\
        \| \mathbf{u} - \mathbf{u}_h \|_{l^2} &\lesssim h^2 \| \mathbf{u} \|_{W^3_\infty}, \\
        \| p - p_h \|_{l^2} &\lesssim \nu h^2 \| \mathbf{u} \|_{W^3_\infty} + h^2 \| \mathbf{p} \|_{W^2_\infty}.
    \end{align*}
\end{theorem}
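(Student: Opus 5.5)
The plan is to split every error into an interpolation part and a discrete part, using the auxiliary variables \eqref{eq: auxiliary variables 1}--\eqref{eq: auxiliary variables 3} as the pivot, and then apply the triangle inequality in the pointwise norms. Lemma~\ref{lem: projection error} controls the interpolation part pointwise. Lemma~\ref{lem: auxiliary error}, combined with the norm relations of Lemma~\ref{lem: norm relation}, controls the discrete part. Throughout, $\| \mathbf{u} - \mathbf{u}_h \|_{l^2}$ is read as the $l^2$ norm of the grid function $u^x(x_i,y_{j^+}) - u^x_{i,j^+}$, $u^y(x_{i^+},y_j) - u^y_{i^+,j}$, and similarly for $p$ and $\boldsymbol{\sigma}$.

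\textbf{Velocity.} Write $\mathbf{u}(\cdot) - \mathbf{u}_h = (\mathbf{u}(\cdot) - \tilde{\mathbf{u}}_h) + (\tilde{\mathbf{u}}_h - \mathbf{u}_h)$. The first term is $O(h^2)\|\mathbf{u}\|_{W^2_\infty}$ at each point by Lemma~\ref{lem: projection error}. Since the weights $h^x_i h^y_{j^+}$ sum to $|\Omega|$, its $l^2$ norm is also $O(h^2)$. The second term lies in $U_{h,0}$, so \eqref{eq: norm relation 5} followed by \eqref{eq: norm relation 2} gives $\|\tilde{\mathbf{u}}_h - \mathbf{u}_h\|_{l^2} \lesssim \|\tilde{\mathbf{u}}_h - \mathbf{u}_h\|_{H^1,h}$, and \eqref{eq: auxiliary error 2} bounds this by $h^2\|\mathbf{u}\|_{W^3_\infty}$.

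\textbf{Pressure.} The argument is the same. By Lemma~\ref{lem: projection error}, $\tilde p_{i^+,j^+} - p(x_{i^+},y_{j^+}) = O(h^2)\|p\|_{W^2_\infty}$. By \eqref{eq: norm relation 4}, $\|\tilde p_h - p_h\|_{l^2} = \|\tilde p_h - p_h\|_{L^2} \lesssim \nu h^2 \|\mathbf{u}\|_{W^3_\infty}$ from \eqref{eq: auxiliary error 3}. Pressure robustness is visible here: the only $p$-dependence enters through the interpolation error.

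\textbf{Off-diagonal gradient.} For $\sigma^{x,y}$ and $\sigma^{y,x}$, Lemma~\ref{lem: projection error} gives pointwise $O(h^2)$ accuracy of $\tilde{\boldsymbol{\sigma}}_h$ at the edge midpoints. Then \eqref{eq: norm relation 1} gives $|\tilde{\boldsymbol{\sigma}}_h - \boldsymbol{\sigma}_h|_{l^2,off} \lesssim \|\tilde{\boldsymbol{\sigma}}_h - \boldsymbol{\sigma}_h\|_{L^2}$, which \eqref{eq: auxiliary error 1} bounds by $h^2\|\mathbf{u}\|_{W^3_\infty}$.

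\textbf{Diagonal gradient (main obstacle).} Here $\breve{\sigma}^{x,x}_{i^+,j^+} = d_x u^x_{i^+,j^+}$ is not directly controlled by $\tilde{\boldsymbol{\sigma}}_h$, so I split
\[
\sigma^{x,x}(x_{i^+},y_{j^+}) - d_x u^x_{i^+,j^+} = \bigl[\sigma^{x,x}(x_{i^+},y_{j^+}) - d_x \tilde u^x_{i^+,j^+}\bigr] + d_x(\tilde u^x - u^x)_{i^+,j^+}.
\]
\emph{Second term.} Its weighted $l^2$ sum is at most $|\tilde{\mathbf{u}}_h - \mathbf{u}_h|_{h^1,diag} \lesssim \|\tilde{\mathbf{u}}_h - \mathbf{u}_h\|_{H^1,h}$ by \eqref{eq: norm relation 2}, hence $O(h^2)$ by \eqref{eq: auxiliary error 2}.

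\emph{First term.} This is the delicate point. Dividing an $O(h^2)$ pointwise error by $h$ would lose an order, so the differencing has to be done before any crude bound.
\begin{itemize}
\item The interpolant part: $\Pi^U_h\mathbf{u}$ gives edge averages $\frac{1}{h^y_{j^+}}\int_{e_{i,j^+}} u^x\,ds$. Their difference quotient in $x$ equals $\frac{1}{|T_{i^+,j^+}|}\int_{T_{i^+,j^+}}\partial_x u^x$ after integrating along horizontal lines. Equivalently, it is the $x$-difference of a smooth function of $x$, namely the edge average at abscissa $x$, taken at $x_i$ and $x_{i+1}$. It is therefore a central difference at the midpoint $x_{i^+}$ and equals $\partial_x(\text{edge average})(x_{i^+}) + O(h^2)$. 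In turn, $\partial_x(\text{edge average})(x_{i^+}) = \sigma^{x,x}(x_{i^+},y_{j^+}) + O(h^2)\|\mathbf{u}\|_{W^3_\infty}$ by the midpoint rule in $y$.
\item The correction part: $d_x\delta^x = \frac{(h^y_{j^+})^2}{6}\,\frac{\partial_y^2 u^x(x_{i+1},y_{j^+}) - \partial_y^2 u^x(x_i,y_{j^+})}{h^x_{i^+}} = O(h^2)\|\mathbf{u}\|_{W^3_\infty}$. The same computation appears in the third component in the proof of Lemma~\ref{lem: truncation error 1}.
\end{itemize}
Thus the first term is $O(h^2)$ pointwise, and likewise for $\sigma^{y,y}$.

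Summing all the pieces in the weighted $l^2$ norms yields the three stated estimates.
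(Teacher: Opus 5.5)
Your proposal is correct and follows essentially the same route as the paper. You split each error through the auxiliary variables $(\tilde{\boldsymbol{\sigma}}_h,\tilde{\mathbf{u}}_h,\tilde p_h)$, bound the discrete part by Lemma~\ref{lem: auxiliary error} with Lemma~\ref{lem: norm relation} and the interpolation part by Lemma~\ref{lem: projection error}, and treat the diagonal gradient by separating $\sigma^{x,x}(x_{i^+},y_{j^+}) - d_x\tilde u^x_{i^+,j^+}$ from the difference quotient of $\tilde{\mathbf{u}}_h - \mathbf{u}_h$. The only difference is that you actually justify the consistency of $d_x\tilde u^x_{i^+,j^+}$, which the paper calls ``easy to see'', via the central-difference and midpoint-rule argument for the edge averages plus the $O(h^2)$ bound on $d_x\delta^x$, and that justification is sound.
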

\begin{proof}
    Combining Lemma \ref{lem: auxiliary error} and Lemma \ref{lem: norm relation} gives
    \begin{align*}
        | \tilde{\boldsymbol{\sigma}}_h - \boldsymbol{\sigma}_h |_{l^2, off} &\lesssim h^2 \| \mathbf{u} \|_{W^3_\infty}, \\
        | \tilde{\mathbf{u}}_h - \mathbf{u}_h |_{h^1, diag} &\lesssim h^2 \| \mathbf{u} \|_{W^3_\infty}, \\
        \| \tilde{\mathbf{u}}_h - \mathbf{u}_h \|_{l^2} &\lesssim h^2 \| \mathbf{u} \|_{W^3_\infty}, \\
        \| \tilde{p}_h - p_h \|_{l^2} &\lesssim \nu h^2 \| \mathbf{u} \|_{W^3_\infty}.
    \end{align*}
    By Lemma \ref{lem: projection error},
    \begin{align*}
        | \boldsymbol{\sigma} - \tilde{\boldsymbol{\sigma}}_h |_{l^2, off} &\lesssim h^2 \| \mathbf{u} \|_{W^3_\infty}, \\
        \| \mathbf{u} - \tilde{\mathbf{u}}_h \|_{l^2} &\lesssim h^2 \| \mathbf{u} \|_{W^2_\infty}, \\
        \| p - \tilde{p}_h \|_{l^2} &\lesssim h^2 \| \mathbf{p} \|_{W^2_\infty}.
    \end{align*}
    By the definition of $\breve{\boldsymbol{\sigma}}_h$, we decompose $\| \boldsymbol{\sigma} - \breve{\boldsymbol{\sigma}}_h \|_{l^2}$ as follows:
    \begin{align*}
        \| \boldsymbol{\sigma} - \breve{\boldsymbol{\sigma}}_h \|^2_{l^2} 
        &= | \boldsymbol{\sigma} - \boldsymbol{\sigma}_h |^2_{l^2, off} + \sum_{(i^+, j^+) \in \Lambda^D_h} h^x_{i^+} h^y_{j^+} \left[ (\sigma^{x, x} (x_{i^+}, y_{j^+}) - d_x u^x_{i^+, j^+})^2 \right. \\ &\quad \left. + (\sigma^{y, y} (x_{i^+}, y_{j^+}) - d_y u^y_{i^+, j^+})^2 \right] \\
        &\lesssim | \boldsymbol{\sigma} - \tilde{\boldsymbol{\sigma}}_h |^2_{l^2, off} + | \tilde{\mathbf{u}}_h - \mathbf{u}_h |^2_{h^1, diag} + \sum_{(i^+, j^+) \in \Lambda^D_h} h^x_{i^+} h^y_{j^+} \left[ (\sigma^{x, x} (x_{i^+}, y_{j^+}) - d_x \tilde{u}^x_{i^+, j^+})^2 \right. \\ &\quad \left. + (\sigma^{y, y} (x_{i^+}, y_{j^+}) - d_y \tilde{u}^y_{i^+, j^+})^2 \right]
    \end{align*}
    By the definition of $\tilde{\mathbf{u}}_h$, it is easy to see that
    \begin{align*}
         d_x \tilde{u}^x_{i^+, j^+} = \sigma^{x, x} (x_{i^+}, y_{j^+}) + O(h^2) \| \mathbf{u} \|_{W^3_\infty}, \quad d_y \tilde{u}^y_{i^+, j^+} = \sigma^{y, y} (x_{i^+}, y_{j^+}) + O(h^2) \| \mathbf{u} \|_{W^3_\infty}.
    \end{align*}
    Combining the above inequalities completes the proof.
\end{proof}
\begin{remark}
    The error estimates demonstrate that our method possesses second-order superconvergence for all variables, and that the velocity (gradient) error is independent of pressure and viscosity (i.e., pressure robustness).
\end{remark}

\section{Extension to Navier-Stokes Equations} \label{sec 6}

In this section, we extend the proposed SDG$_0$ method to the incompressible Navier-Stokes equations:
\begin{align*}
    \boldsymbol{\sigma} - \nabla \mathbf{u} &= 0, \\
    \frac{\partial \mathbf{u}}{\partial t} + \mathbf{u} \cdot \nabla \mathbf{u} - \nu \nabla \cdot \boldsymbol{\sigma} + \nabla p &= \mathbf{f}, \\
    \nabla \cdot \mathbf{u} &= 0.
\end{align*}

\subsection{Spatial Discretization}

We discretize the nonlinear convection term by the following form:
\begin{equation} \label{eq: discretized convection term}
\begin{aligned}
    c_h (\boldsymbol{\sigma}_h, \mathbf{u}_h, \mathbf{v}_h) &= - \sum_{e \in \mathcal{E}^D_h} \int_e n^x \{\!\!\{ u^x_h \}\!\!\} \llbracket u^x_h \rrbracket \{\!\!\{ v^x_h \}\!\!\} \, ds + (u^y_h \sigma^{x, y}_h, v^x_h) \\ &\quad - \sum_{e \in \mathcal{E}^D_h} \int_e n^y \{\!\!\{ u^y_h \}\!\!\} \llbracket u^y_h \rrbracket \{\!\!\{ v^y_h \}\!\!\} \, ds + (u^x_h \sigma^{y, x}_h, v^y_h).
\end{aligned}
\end{equation}
The consistency error of the discrete convection term is analyzed in the following lemma.
\begin{lemma} \label{lem: truncation error 4}
    Let $\tilde{\boldsymbol{\sigma}}_h$ and $\tilde{\mathbf{u}}_h$ be defined in \eqref{eq: auxiliary variables 1}--\eqref{eq: auxiliary variables 2}. For basis function $\boldsymbol{\phi}^U_e$, it holds that
    \begin{align}
        \begin{aligned} \label{eq: truncation error 4.1}
            &(\mathbf{u} \cdot \nabla \mathbf{u}, \boldsymbol{\phi}^U_{e_{i, j^+}}) - c_h (\tilde{\boldsymbol{\sigma}}_h, \tilde{\mathbf{u}}_h, \boldsymbol{\phi}^U_{e_{i, j^+}}) \\
            &= \int_{e_{i^+, j^+}} \mathbf{R}^{U, x}_{e_{i^+, j^+}} \cdot \llbracket \boldsymbol{\phi}^U_{e_{i, j^+}} \rrbracket \, ds + \int_{e_{i^-, j^+}} \mathbf{R}^{U, x}_{e_{i^-, j^+}} \cdot \llbracket \boldsymbol{\phi}^U_{e_{i, j^+}} \rrbracket \, ds \\ &\quad + (\mathbf{R}^U_{e_{i, j^+}}, \boldsymbol{\phi}^U_{e_{i, j^+}}), \quad \forall e_{i, j^+} \in \mathring{\mathcal{E}}^V_h,
        \end{aligned} \\
        \begin{aligned} \label{eq: truncation error 4.2}
            &(\mathbf{u} \cdot \nabla \mathbf{u}, \boldsymbol{\phi}^U_{e_{i^+, j}}) - c_h (\tilde{\boldsymbol{\sigma}}_h, \tilde{\mathbf{u}}_h, \boldsymbol{\phi}^U_{e_{i^+, j}}) \\
            &= \int_{e_{i^+, j^+}} \mathbf{R}^{U, y}_{e_{i^+, j^+}} \cdot \llbracket \boldsymbol{\phi}^U_{e_{i^+, j}} \rrbracket \, ds + \int_{e_{i^+, j^-}} \mathbf{R}^{U, y}_{e_{i^+, j^-}} \cdot \llbracket \boldsymbol{\phi}^U_{e_{i^+, j}} \rrbracket \, ds \\ &\quad + (\mathbf{R}^U_{e_{i^+, j}}, \boldsymbol{\phi}^U_{e_{i^+, j}}), \quad \forall e_{i^+, j} \in \mathring{\mathcal{E}}^H_h,
        \end{aligned}
    \end{align}
    where
    \begin{align*}
        \mathbf{R}^{U, x}_{e_{i^+, j^+}} &= (R^{U, x}_{i^+, j^+} \chi_{T_{i^+, j^+}}, 0)^{\mathsf{T}}, &
        \mathbf{R}^U_{e_{i, j^+}} &= (R^U_{i, j^+} \chi_{T_{i, j^+}}, 0)^{\mathsf{T}}, \\
        \mathbf{R}^{U, y}_{e_{i^+, j^+}} &= (0, R^{U, y}_{i^+, j^+}  \chi_{T_{i^+, j^+}})^{\mathsf{T}}, &
        \mathbf{R}^U_{e_{i^+, j}} &= (0, R^U_{i^+, j} \chi_{T_{i^+, j}})^{\mathsf{T}},
    \end{align*}
    with $R^{U, x}_{i^+, j^+} = O(h^2)$, $R^{U, y}_{i^+, j^+} = O(h^2)$, $R^U_{i, j^+} = O(h^2)$ and $R^U_{i^+, j} = O(h^2)$.
\end{lemma}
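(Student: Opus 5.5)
We treat only \eqref{eq: truncation error 4.1} for an interior vertical edge $e_{i, j^+}$; \eqref{eq: truncation error 4.2} follows by exchanging the roles of $x$ and $y$. The test function $\boldsymbol{\phi}^U_{e_{i, j^+}} = (\chi_{T_{i, j^+}}, 0)^{\mathsf{T}}$ only sees the $x$-component, so $c_h$ reduces to two pieces.

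The first piece is the diagonal-edge flux part. $T_{i, j^+}$ consists of $T^+_{i^-, j^+}$ and $T^-_{i^+, j^+}$, so the only diagonal edges on which $\llbracket \phi \rrbracket \neq 0$ are $e_{i^-, j^+}$ and $e_{i^+, j^+}$. On each of them $\{\!\!\{\phi\}\!\!\} = \tfrac12$.

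The second piece is the mass-type term $(\tilde u^y_h \tilde\sigma^{x,y}_h, \chi_{T_{i,j^+}})$. On each of the two triangles both factors are constants. Namely, $\tilde u^y_h$ equals $\tilde u^y_{i^-,j+1}$ on the upper-right triangle and $\tilde u^y_{i^+,j}$ on the lower-left one. The factor $\tilde\sigma^{x,y}_h$ is $\tilde\sigma^{x,y}_{i^-,j+1}$ or $\tilde\sigma^{x,y}_{i^+,j}$ accordingly. So this term is an explicit sum of two products.

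The plan is to split the exact convection integral in the same way. We write $(\mathbf u\cdot\nabla \mathbf u)^x = u^x \partial_x u^x + u^y \partial_y u^x$.

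For the $u^y\partial_y u^x$ part, we compare $\int_{T_{i,j^+}} u^y\sigma^{x,y}$ with the discrete products. Lemma~\ref{lem: projection error} gives each factor at its node up to $O(h^2)$. Each triangle then contributes $\tfrac12 h^x_{i^\mp}h^y_{j^+}$ times the value of $u^y\sigma^{x,y}$ at a point that is not the triangle centroid. This produces $O(h^3)$ first-order Taylor remainders. The key observation is that these do not cancel inside $T_{i,j^+}$ on a non-uniform grid. Instead, they can be grouped by diagonal edge. The remainder from $T^-_{i^+,j^+}$ pairs with the one from $T^+_{i^+,j^+}$, which belongs to the neighbouring test function $\boldsymbol\phi^U_{e_{i+1,j^+}}$. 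This is exactly why the lemma is stated with flux-type residuals $\int_{e}\mathbf R^{U,x}_e\cdot\llbracket\phi\rrbracket$ on $e_{i^\pm,j^+}$. We therefore define $R^{U,x}_{i^+,j^+}$ as (area-normalized) the antisymmetric $O(h)$ part attached to edge $e_{i^+,j^+}$. We then check that it is $O(h^2)$ after dividing by the edge length $l_{i^+,j^+}\sim h$; equivalently, that the edge contribution is $O(h^3)$ while the residual paired against $\llbracket \phi\rrbracket$ measures $O(h)\cdot O(h^2)$. The symmetric leftover goes into $R^U_{i,j^+}$ and must be $O(h^2)$ per unit area, i.e.\ $O(h^4)$ in integrated form.

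For the $u^x\partial_x u^x$ part, on $e_{i^+,j^+}$ the discrete term is $-n^x l\,\{\!\!\{\tilde u^x\}\!\!\}\llbracket \tilde u^x\rrbracket\cdot\tfrac12$, with $n^x l = h^y_{j^+}$. The jump is $\tilde u^x_{i,j^+}-\tilde u^x_{i+1,j^+} = -h^x_{i^+}\,d_x\tilde u^x_{i^+,j^+}$, and the average is $u^x(x_{i^+},y_{j^+})+O(h^2)$. By the computation at the end of the previous theorem, $d_x\tilde u^x_{i^+,j^+}=\sigma^{x,x}(x_{i^+},y_{j^+})+O(h^2)$. Hence each edge gives $\tfrac12 h^x_{i^+}h^y_{j^+}(u^x\partial_xu^x)(x_{i^+},y_{j^+})+O(h^4)$, which is a midpoint rule on the square cell $T_{i^+,j^+}$, halved. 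The exact integral over $T_{i,j^+}$ is instead over two triangles. We rewrite it as half of the two cell integrals over $T_{i^\pm,j^+}$, plus the difference between triangle and half-cell integrals. That difference is again an $O(h^3)$ antisymmetric quantity attached to each diagonal edge. It is absorbed into $R^{U,x}_{i^\pm,j^+}$ in the same flux form.

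The main obstacle is this bookkeeping. We must show that every $O(h^3)$ term is either of pure edge-antisymmetric type, representable as $\int_e R\,\llbracket\phi\rrbracket$ with the same $R$ seen from both adjacent test functions, or cancels to $O(h^4)$. This is where the uniform diagonal orientation matters: every diagonal has the same geometry relative to its two triangles, so the same residual arises from both sides. We will carry out explicit second-order Taylor expansions about $(x_{i^\pm},y_{j^+})$ for the terms we only need to leading order. The $O(h^2)$ corrections $\boldsymbol\delta_h$ and the projection errors contribute only at $O(h^4)$ after multiplication by $O(h^2)$ areas, so they go into $R^U$.
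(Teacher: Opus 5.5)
Your proposal is correct and follows the paper's argument. Both expand the exact integrals over $T^-_{i^+,j^+}$ and $T^+_{i^-,j^+}$ to first order, match the zeroth-order point values with the discrete products at $O(h^4)$ (via Lemma~\ref{lem: projection error} and $d_x\tilde u^x_{i^+,j^+}=\sigma^{x,x}(x_{i^+},y_{j^+})+O(h^2)$), attach the non-cancelling $O(h^3)$ first-order terms to the diagonal edges as $l_{i^\pm,j^+}R^{U,x}_{i^\pm,j^+}$, and leave the $O(h^4)$ remainder in $R^U_{i,j^+}$. The sign reversal you need across $e_{i^+,j^+}$ comes from the point reflection through the cell centre $(x_{i^+},y_{j^+})$, which maps $T^-_{i^+,j^+}$ onto $T^+_{i^+,j^+}$ and the node $(x_{i^+},y_j)$ onto $(x_{i^+},y_{j+1})$. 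Your half-cell splitting for the $u^x\partial_x u^x$ part makes this antisymmetry exact, which is slightly tidier bookkeeping than the paper's explicit Taylor coefficients.
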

\begin{proof}
    We only show the proof of \eqref{eq: truncation error 4.1}, and \eqref{eq: truncation error 4.2} can be proven similarly. Write the left side of \eqref{eq: truncation error 4.1} in detail:
    \begin{equation} \label{prf: truncation error 4.1}
    \begin{aligned}
        &(\mathbf{u} \cdot \nabla \mathbf{u}, \boldsymbol{\phi}^U_{e_{i, j^+}}) - c_h (\tilde{\boldsymbol{\sigma}}_h, \tilde{\mathbf{u}}_h, \boldsymbol{\phi}^U_{e_{i, j^+}}) \\
        &= \int_{T_{i, j^+}} u^x \frac{\partial u^x}{\partial x} \, dx \, dy + \int_{T_{i, j^+}} u^y \frac{\partial u^x}{\partial y} \, dx \, dy \\
        &\quad - \frac{h^x_{i^+} h^y_{j^+}}{2} \cdot \frac{\tilde{u}^x_{i, j^+} + \tilde{u}^x_{i + 1, j^+}}{2} \cdot \frac{\tilde{u}^x_{i + 1, j^+} - \tilde{u}^x_{i, j^+}}{h^x_{i^+}} \\
        &\quad - \frac{h^x_{i^-} h^y_{j^+}}{2} \cdot \frac{\tilde{u}^x_{i - 1, j^+} + \tilde{u}^x_{i, j^+}}{2} \cdot \frac{\tilde{u}^x_{i, j^+} - \tilde{u}^x_{i - 1, j^+}}{h^x_{i^-}} \\
        &\quad - \frac{1}{2} h^x_{i^+} h^y_{j^+} \tilde{u}^y_{i^+, j} \tilde{\sigma}^{x, y}_{i^+, j} - \frac{1}{2} h^x_{i^-} h^y_{j^+} \tilde{u}^y_{i^-, j + 1} \tilde{\sigma}^{x, y}_{i^-, j + 1}.
    \end{aligned}
    \end{equation}
    For simplicity, we denote
    \begin{align*}
        g^{x, x} = u^x \frac{\partial u^x}{\partial x}, \quad g^{x, y} = u^y \frac{\partial u^x}{\partial y}.
    \end{align*}
    For the first term on the right side of \eqref{prf: truncation error 4.1},
    \begin{align*}
        &\int_{T_{i, j^+}} u^x \frac{\partial u^x}{\partial x} \, dx \, dy 
        = \int_{T^-_{i^+, j^+}} g^{x, x} \, dx \, dy + \int_{T^+_{i^-, j^+}} g^{x, x} \, dx \, dy \\
        &= \frac{1}{2} h^x_{i^+} h^y_{j^+} g^{x, x} (x_{i^+}, y_{j^+}) + \frac{1}{2} h^x_{i^-} h^y_{j^+} g^{x, x} (x_{i^-}, y_{j^+}) \\
        &\quad - \frac{(h^x_{i^+})^2 h^y_{j^+}}{12} \cdot \frac{\partial g^{x, x}}{\partial x} (x_{i^+}, y_{j^+}) - \frac{h^x_{i^+} (h^y_{j^+})^2}{12} \cdot \frac{\partial g^{x, x}}{\partial y} (x_{i^+}, y_{j^+}) \\
        &\quad + \frac{(h^x_{i^-})^2 h^y_{j^+}}{12} \cdot \frac{\partial g^{x, x}}{\partial x} (x_{i^-}, y_{j^+}) + \frac{h^x_{i^-} (h^y_{j^+})^2}{12} \cdot \frac{\partial g^{x, x} }{\partial y}(x_{i^-}, y_{j^+}) + O(h^4).
    \end{align*}
    For the second term on the right side of \eqref{prf: truncation error 4.1},
    \begin{align*}
        &\int_{T_{i, j^+}} u^y \frac{\partial u^x}{\partial y} \, dx \, dy 
        = \int_{T^-_{i^+, j^+}} g^{x, y} \, dx \, dy + \int_{T^+_{i^-, j^+}} g^{x, y} \, dx \, dy \\
        &= \frac{1}{2} h^x_{i^+} h^y_{j^+} g^{x, y} (x_{i^+}, y_j) + \frac{1}{2} h^x_{i^-} h^y_{j^+} g^{x, y} (x_{i^-}, y_{j + 1}) \\
        &\quad - \frac{(h^x_{i^+})^2 h^y_{j^+}}{12} \cdot \frac{\partial g^{x, y}}{\partial x} (x_{i^+}, y_{j^+}) + \frac{h^x_{i^+} (h^y_{j^+})^2}{6} \cdot \frac{\partial g^{x, y}}{\partial y} (x_{i^+}, y_{j^+}) \\
        &\quad + \frac{(h^x_{i^-})^2 h^y_{j^+}}{12} \cdot \frac{\partial g^{x, y}}{\partial x} (x_{i^-}, y_{j^+}) - \frac{h^x_{i^-} (h^y_{j^+})^2}{6} \cdot \frac{\partial g^{x, y}}{\partial y} (x_{i^-}, y_{j^+}) + O(h^4).
    \end{align*}
    The right side of \eqref{eq: truncation error 4.1} equals $l_{i^+, j^+} R^{U, x}_{i^+, j^+} - l_{i^-, j^+} R^{U, x}_{i^-, j^+} + h^x_i h^y_{j^+} R^U_{i, j^+}$. Comparing the both sides of \eqref{eq: truncation error 4.1} yields
    \begin{align*}
        R^{U, x}_{i^+, j^+} &= \frac{1}{l_{i^+, j^+}} \left[ - \frac{(h^x_{i^+})^2 h^y_{j^+}}{12} \cdot \frac{\partial g^{x, x}}{\partial x} (x_{i^+}, y_{j^+}) - \frac{h^x_{i^+} (h^y_{j^+})^2}{12} \cdot \frac{\partial g^{x, x}}{\partial y} (x_{i^+}, y_{j^+}) \right. \\
        &\quad \left. - \frac{(h^x_{i^+})^2 h^y_{j^+}}{12} \cdot \frac{\partial g^{x, y}}{\partial x} (x_{i^+}, y_{j^+}) + \frac{h^x_{i^+} (h^y_{j^+})^2}{6} \cdot \frac{\partial g^{x, y}}{\partial y} (x_{i^+}, y_{j^+}) \right] \\
        &= O (h^2),
    \end{align*}
    and
    \begin{align*}
        R^U_{i, j^+} &= \frac{1}{h^x_i h^y_{j^+}} \left[ \frac{1}{2} h^x_{i^+} h^y_{j^+} g^{x, x} (x_{i^+}, y_{j^+}) + \frac{1}{2} h^x_{i^-} h^y_{j^+} g^{x, x} (x_{i^-}, y_{j^+}) \right. \\
        &\quad - \frac{h^x_{i^+} h^y_{j^+}}{2} \cdot \frac{\tilde{u}^x_{i, j^+} + \tilde{u}^x_{i + 1, j^+}}{2} \cdot \frac{\tilde{u}^x_{i + 1, j^+} - \tilde{u}^x_{i, j^+}}{h^x_{i^+}} \\
        &\quad - \frac{h^x_{i^-} h^y_{j^+}}{2} \cdot \frac{\tilde{u}^x_{i - 1, j^+} + \tilde{u}^x_{i, j^+}}{2} \cdot \frac{\tilde{u}^x_{i, j^+} - \tilde{u}^x_{i - 1, j^+}}{h^x_{i^-}} \\
        &\quad + \frac{1}{2} h^x_{i^+} h^y_{j^+} g^{x, y} (x_{i^+}, y_j) + \frac{1}{2} h^x_{i^-} h^y_{j^+} g^{x, y} (x_{i^-}, y_{j + 1}) \\
        &\quad \left. - \frac{1}{2} h^x_{i^+} h^y_{j^+} \tilde{u}^y_{i^+, j} \tilde{\sigma}^{x, y}_{i^+, j} - \frac{1}{2} h^x_{i^-} h^y_{j^+} \tilde{u}^y_{i^-, j + 1} \tilde{\sigma}^{x, y}_{i^-, j + 1} + O(h^4) \right] \\
        &= O (h^2),
    \end{align*}
    which completes the proof.
\end{proof}
Consequently, $c_h (\boldsymbol{\sigma}_h, \mathbf{u}_h, \mathbf{v}_h)$ is a second-order approximation of the convection term, as stated in the following theorem.
\begin{theorem} \label{thm: truncation error 4}
    Let $\tilde{\boldsymbol{\sigma}}_h$ and $\tilde{\mathbf{u}}_h$ be defined in \eqref{eq: auxiliary variables 1}--\eqref{eq: auxiliary variables 2}. It holds that
    \begin{align*}
        (\mathbf{u} \cdot \nabla \mathbf{u}, \mathbf{v}_h) - c_h (\tilde{\boldsymbol{\sigma}}_h, \tilde{\mathbf{u}}_h, \mathbf{v}_h) \lesssim h^2 \| \mathbf{v}_h \|_{H^1, h}, \quad \forall \mathbf{v}_h \in U_{h, 0}.
    \end{align*}
\end{theorem}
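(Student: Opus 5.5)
The plan is to expand an arbitrary $\mathbf{v}_h \in U_{h,0}$ in the basis $\{\boldsymbol{\phi}^U_e\}$ and sum the local identities \eqref{eq: truncation error 4.1}--\eqref{eq: truncation error 4.2} of Lemma~\ref{lem: truncation error 4}. Write $\mathbf{v}_h = \sum_{(i,j^+)} v^x_{i,j^+}\boldsymbol{\phi}^U_{e_{i,j^+}} + \sum_{(i^+,j)} v^y_{i^+,j}\boldsymbol{\phi}^U_{e_{i^+,j}}$; only interior edges appear, because the normal component vanishes on $\partial\Omega$. Both sides of the claimed identity are linear in $\mathbf{v}_h$, since $c_h$ is linear in its third argument. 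We therefore obtain
\begin{align*}
(\mathbf{u}\cdot\nabla\mathbf{u},\mathbf{v}_h) - c_h(\tilde{\boldsymbol{\sigma}}_h,\tilde{\mathbf{u}}_h,\mathbf{v}_h) = \sum_{e\in\mathcal{E}^D_h}\int_e \mathbf{R}^U_{D,e}\cdot\llbracket \mathbf{v}_h\rrbracket\,ds + (\mathbf{R}^U_h,\mathbf{v}_h),
\end{align*}
where $\mathbf{R}^U_{D,e} = (R^{U,x}_{i^+,j^+}, R^{U,y}_{i^+,j^+})^{\mathsf{T}}$ on $e = e_{i^+,j^+}$ and $\mathbf{R}^U_h$ is the piecewise constant field that equals $R^U_{i,j^+}$ in the $x$-component on $T_{i,j^+}$ and $R^U_{i^+,j}$ in the $y$-component on $T_{i^+,j}$.

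The key bookkeeping step is to check that the diagonal-edge terms combine into jumps of $\mathbf{v}_h$. On $e_{i^+,j^+}$, the jump $\llbracket v^x_h\rrbracket$ equals $v^x_{i,j^+} - v^x_{i+1,j^+}$ (or its negative), and it receives exactly the contributions $\llbracket\boldsymbol{\phi}^U_{e_{i,j^+}}\rrbracket$ and $\llbracket\boldsymbol{\phi}^U_{e_{i+1,j^+}}\rrbracket$ weighted by the coefficients $v^x_{i,j^+}$ and $v^x_{i+1,j^+}$. For this to hold, the residual $R^{U,x}_{i^+,j^+}$ attached to $e_{i^+,j^+}$ must be the same number in the equation for $e_{i,j^+}$ and in the equation for $e_{i+1,j^+}$.

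This is the main point to verify. The explicit formula in the proof of Lemma~\ref{lem: truncation error 4} defines $R^{U,x}_{i^+,j^+}$ from the Taylor terms on $T^-_{i^+,j^+}$ arising in the equation for $e_{i,j^+}$. In the equation for $e_{i+1,j^+}$, the same edge enters through the terms on $T^+_{i^+,j^+}$ with $i$ replaced by $i+1$, i.e.\ with the $h^x_{i^-}$-type terms. I expect these to coincide with the opposite sign, which is absorbed by the sign of the jump. If the two expressions differ by an $O(h^3)$ amount, I will move that discrepancy into the cell residual $\mathbf{R}^U_h$, where it remains $O(h^2)$ after division by the cell area $h^x_i h^y_{j^+}$. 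Edges adjacent to the boundary are harmless, since the boundary coefficients vanish.

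With the identity in hand, apply Cauchy--Schwarz to the edge sum:
\begin{align*}
\sum_{e\in\mathcal{E}^D_h}\int_e \mathbf{R}^U_{D,e}\cdot\llbracket\mathbf{v}_h\rrbracket\,ds \le \Big(\sum_{e} h_e\|\mathbf{R}^U_{D,e}\|^2_{L^2(e)}\Big)^{1/2}\|\mathbf{v}_h\|_{H^1,h} \lesssim h^2\|\mathbf{v}_h\|_{H^1,h}.
\end{align*}
The first factor is $O(h^2)$ because $\sum_e h_e^2 = O(|\Omega|)$ under the mesh regularity \eqref{eq: mesh regularity}. For the volume term, $(\mathbf{R}^U_h,\mathbf{v}_h) \le \|\mathbf{R}^U_h\|_{L^2}\|\mathbf{v}_h\|_{L^2} \lesssim h^2\|\mathbf{v}_h\|_{L^2}$. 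The discrete Poincaré inequality $\|\mathbf{v}_h\|_{L^2}\lesssim\|\mathbf{v}_h\|_{H^1,h}$ for $\mathbf{v}_h\in U_{h,0}$ from Lemma~\ref{lem: norm relation} then bounds this by $h^2\|\mathbf{v}_h\|_{H^1,h}$. Adding the two bounds gives the claim.
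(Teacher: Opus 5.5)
Your proposal is correct and is essentially the paper's proof: expand $\mathbf{v}_h\in U_{h,0}$ in the interior basis functions, sum the identities of Lemma~\ref{lem: truncation error 4} to obtain a diagonal-edge jump term plus a cell term, and bound these by Cauchy--Schwarz against $\|\mathbf{v}_h\|_{H^1,h}$ and by the discrete Poincar\'e inequality of Lemma~\ref{lem: norm relation}, respectively. The bookkeeping point you flag is already built into the lemma, which attaches a single residual $R^{U,x}_{i^+,j^+}$ to each diagonal edge, and it does check out: the $T^+_{i^+,j^+}$ Taylor terms in the equation for $e_{i+1,j^+}$ are exactly $-l_{i^+,j^+}R^{U,x}_{i^+,j^+}$, so your fallback is never needed.
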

\begin{proof}
    Define
    \begin{align*}
        \mathbf{R}^{U, D}_h &= \sum_{e \in \mathcal{E}^D_h} (\mathbf{R}^{U, x}_e + \mathbf{R}^{U, y}_e) = (\mathbf{R}^{U, x}_{i^+, j^+}, \mathbf{R}^{U, y}_{i^+, j^+})^{\mathsf{T}} \quad \text{in } T_{i^+, j^+}, \\
        \mathbf{R}^{U, O}_h &= \sum_{e \in \mathcal{E}^H_h \cup \mathcal{E}^V_h} \mathbf{R}^U_e =
        \begin{cases}
            (R^U_{i, j^+}, R^U_{i^+, j})^{\mathsf{T}} & \text{in } T^-_{i^+, j^+}, \\
            (R^U_{i + 1, j^+}, R^U_{i^+, j + 1})^{\mathsf{T}} & \text{in } T^+_{i^+, j^+},
        \end{cases}
    \end{align*}
    where we let $R^U_{0, j^+}$, $R^U_{n_x, j^+}$, $R^U_{i^+, 0}$ and $R^U_{i^+, n_y}$ all be zero. It is clear that $\| \mathbf{R}^{U, D}_h \|_{L^2} = O(h^2)$ and $\| \mathbf{R}^{U, O}_h \|_{L^2} = O(h^2)$. By Lemma \ref{lem: truncation error 4}, for any $\mathbf{v}_h \in U_{h, 0}$,
    \begin{align*}
        &(\mathbf{u} \cdot \nabla \mathbf{u}, \mathbf{v}_h) - c_h (\tilde{\boldsymbol{\sigma}}_h, \tilde{\mathbf{u}}_h, \mathbf{v}_h)
        = \sum_{e \in \mathcal{E}^D_h} \int_e \mathbf{R}^{U, D}_h \cdot \llbracket \mathbf{v}_h \rrbracket \, ds + (\mathbf{R}^{U, O}_h, \mathbf{v}_h) \\
        & \leq \| \mathbf{R}^{U, D}_h \|_{L^2} \| \mathbf{v}_h \|_{H^1, h} + \| \mathbf{R}^{U, O}_h \|_{L^2} \| \mathbf{v}_h \|_{L^2}
        \lesssim h^2 \| \mathbf{v}_h \|_{H^1, h}.
    \end{align*}
\end{proof}
\begin{remark}
    The discrete convection term $c_h(\boldsymbol{\sigma}_h, \mathbf{u}_h, \mathbf{v}_h)$ is constructed by a hybrid strategy that couples a mixed finite element treatment of the velocity gradient with an edge-based DG-style discretization of the advective flux. This operator is second-order consistent and, since it is independent of the diagonal entries of $\boldsymbol{\sigma}_h$, remains compatible with the local static condensation in Section~\ref{sec 4}. However, $c_h$ is not skew-symmetric and therefore does not, by itself, provide discrete energy stability. To address this issue, we adopt the SAV approach below, which guarantees unconditional energy stability. An upwind discretization could alternatively be employed to enforce stability at the spatial level, as in the SDG literature \cite{zhao2019priori, kim2021pressurerobust}, but this typically reduces the accuracy to first order. The present choice therefore achieves a balance between stability and second-order accuracy.
\end{remark}
\begin{remark}
    The term $(\mathbf{u}_h, \mathbf{v}_h)$ generated from the time discretization is also second-order consistent in the sense that
    \begin{align*}
        (\mathbf{u}, \mathbf{v}_h) - (\tilde{\mathbf{u}}_h, \mathbf{v}_h) \lesssim h^2 \| \mathbf{v}_h \|_{H^1, h}, \quad \forall \mathbf{v}_h \in U_{h, 0}.
    \end{align*}
    The proof is similar to that of Lemma \ref{lem: truncation error 4} and Theorem \ref{thm: truncation error 4} and is omitted here.
\end{remark}

\subsection{Temporal Discretization}

Temporal Discretization is performed using the scalar auxiliary variable approach and Crank-Nicolson scheme \cite{li2020error}.
Introduce the scalar auxiliary variable $s = \sqrt{E(\mathbf{u}) + \delta}$, where $E(\mathbf{u}) = \frac{1}{2} \| \mathbf{u} \|^2_{L^2 (\Omega)}$ is the kinetic energy and $\delta$ is a small positive constant. Then the Navier-Stokes equations can be reformulated as
\begin{align*}
    \boldsymbol{\sigma} - \nabla \mathbf{u} &= 0, \\
    \frac{\partial \mathbf{u}}{\partial t} + \frac{s}{\sqrt{E(\mathbf{u}) + \delta}} \mathbf{u} \cdot \nabla \mathbf{u} - \nu \nabla \cdot \boldsymbol{\sigma} + \nabla p &= \mathbf{f}, \\
    \nabla \cdot \mathbf{u} &= 0, \\
    \frac{\partial s}{\partial t} - \frac{1}{2 s} \int_{\Omega} \mathbf{u} \cdot \frac{\partial \mathbf{u}}{\partial t} \, dx \, dy - \frac{1}{2 \sqrt{E(\mathbf{u}) + \delta}} \int_{\Omega} (\mathbf{u} \cdot \nabla \mathbf{u}) \cdot \mathbf{u} \, dx \, dy &= 0.
\end{align*}
Take a uniform partition of the time interval $[0, T]$ with a time step size $\Delta t = T / N$ and denote $t_n = n \Delta t$ for $n = 0, 1, \ldots, N$. For any time-dependent quantity $\psi$, denote $\psi^n = \psi(\cdot, t_n)$. Define the midpoint interpolation $\psi^{n + 1/2} = (\psi^{n + 1} + \psi^n) / 2$ and the extrapolation $\bar{\psi}^{n + 1/2} = (3 \psi^n - \psi^{n - 1}) / 2$.

\textbf{CN-SAV-SDG$_0$ Scheme}: for $n = 1, 2, \ldots, N - 1$, find $\boldsymbol{\sigma}^{n + 1}_h \in \Sigma_h$, $\mathbf{u}^{n + 1}_h \in U_{h, 0}$ and $p^{n + 1}_h \in P_{h, 0}$ such that
\begin{empheq}[left=\empheqlbrace]{align}
    &(\boldsymbol{\sigma}^{n + 1}_h, \boldsymbol{\tau}_h) - B^*_h (\mathbf{u}^{n + 1}_h, \boldsymbol{\tau}_h) = 0, \quad \forall \boldsymbol{\tau}_h \in \Sigma_h, \label{eq: CN-SAV-SDG 1} \\
    &\begin{multlined} \label{eq: CN-SAV-SDG 2}
        (\frac{1}{\Delta t} (\mathbf{u}^{n + 1}_h - \mathbf{u}^n_h), \mathbf{v}_h) + \frac{s^{n + 1/2}}{\sqrt{E(\bar{\mathbf{u}}^{n + 1/2}_h) + \delta}} c_h (\bar{\boldsymbol{\sigma}}^{n + 1/2}_h, \bar{\mathbf{u}}^{n + 1/2}_h, \mathbf{v}_h) \\ + \nu B_h (\boldsymbol{\sigma}^{n + 1/2}_h, \mathbf{v}_h) - b^*_h (p^{n + 1/2}_h, \mathbf{v}_h) = (\mathbf{f}^{n + 1/2}, \mathbf{v}_h), \quad \forall \mathbf{v}_h \in U_{h, 0},
    \end{multlined} \\
    &b_h (\mathbf{u}^{n + 1}_h, q_h) = 0, \quad \forall q_h \in P_{h, 0}, \label{eq: CN-SAV-SDG 3} \\
    &\begin{multlined} \label{eq: CN-SAV-SDG 4}
        \frac{1}{\Delta t} (s^{n + 1} - s^n) - \frac{1}{2 s^{n + 1/2}} (\mathbf{u}^{n + 1/2}_h, \frac{1}{\Delta t} (\mathbf{u}^{n + 1}_h - \mathbf{u}^n_h)) \\ - \frac{1}{2 \sqrt{E(\bar{\mathbf{u}}^{n + 1/2}_h) + \delta}} c_h (\bar{\boldsymbol{\sigma}}^{n + 1/2}_h, \bar{\mathbf{u}}^{n + 1/2}_h, \mathbf{u}^{n + 1/2}_h) = 0.
    \end{multlined}
\end{empheq}
The first time step is not covered by the above recurrence due to the extrapolation, and it can be initialized by any consistent first-order method.

The CN-SAV-SDG$_0$ scheme can be efficiently implemented by splitting it into two linear Stokes-type problems and a scalar quadratic equation. Introduce intermediate variable:
\begin{align*}
    w^{n + 1} = \frac{s^{n + 1/2}}{\sqrt{E(\bar{\mathbf{u}}^{n + 1/2}_h) + \delta}}.
\end{align*}
Then the solution can be expressed as
\begin{align*}
    \boldsymbol{\sigma}^{n + 1}_h = \hat{\boldsymbol{\sigma}}^{n + 1}_h + w^{n + 1} \check{\boldsymbol{\sigma}}^{n + 1}_h, \quad \mathbf{u}^{n + 1}_h = \hat{\mathbf{u}}^{n + 1}_h + w^{n + 1} \check{\mathbf{u}}^{n + 1}_h, \quad p^{n + 1}_h = \hat{p}^{n + 1}_h + w^{n + 1} \check{p}^{n + 1}_h,
\end{align*}
where $(\hat{\boldsymbol{\sigma}}^{n + 1}_h, \hat{\mathbf{u}}^{n + 1}_h, \hat{p}^{n + 1}_h)$, $(\check{\boldsymbol{\sigma}}^{n + 1}_h, \check{\mathbf{u}}^{n + 1}_h, \check{p}^{n + 1}_h)$, and $w^{n + 1}$ are obtained by solving the following three sub-problems. \\
\textbf{Sub-problem 1}: Find $\hat{\boldsymbol{\sigma}}^{n + 1}_h \in \Sigma_h$, $\hat{\mathbf{u}}^{n + 1}_h \in U_{h, 0}$ and $\hat{p}^{n + 1}_h \in P_{h, 0}$ such that
\begin{empheq}[left=\empheqlbrace]{align*}
    &(\hat{\boldsymbol{\sigma}}^{n + 1}_h, \boldsymbol{\tau}_h) - B^*_h (\hat{\mathbf{u}}^{n + 1}_h, \boldsymbol{\tau}_h) = 0, \quad \forall \boldsymbol{\tau}_h \in \Sigma_h, \\
    &\begin{multlined}
        \frac{1}{\Delta t} (\hat{\mathbf{u}}^{n + 1}_h, \mathbf{v}_h) + \frac{\nu}{2} B_h (\hat{\boldsymbol{\sigma}}^{n + 1}_h, \mathbf{v}_h) - \frac{1}{2} b^*_h (\hat{p}^{n + 1}_h, \mathbf{v}_h) \\ = (\mathbf{f}^{n + 1/2}, \mathbf{v}_h) + \frac{1}{\Delta t} (\mathbf{u}^{n}_h, \mathbf{v}_h) - \frac{\nu}{2} B_h (\boldsymbol{\sigma}^{n}_h, \mathbf{v}_h) + \frac{1}{2} b^*_h (p^{n}_h, \mathbf{v}_h), \quad \forall \mathbf{v}_h \in U_{h, 0},
    \end{multlined} \\
    &b_h (\hat{\mathbf{u}}^{n + 1}_h, q_h) = 0, \quad \forall q_h \in P_{h, 0}.
\end{empheq}
\textbf{Sub-problem 2}: Find $\check{\boldsymbol{\sigma}}^{n + 1}_h \in \Sigma_h$, $\check{\mathbf{u}}^{n + 1}_h \in U_{h, 0}$ and $\check{p}^{n + 1}_h \in P_{h, 0}$ such that
\begin{empheq}[left=\empheqlbrace]{align*}
    &(\check{\boldsymbol{\sigma}}^{n + 1}_h, \boldsymbol{\tau}_h) - B^*_h (\check{\mathbf{u}}^{n + 1}_h, \boldsymbol{\tau}_h) = 0, \quad \forall \boldsymbol{\tau}_h \in \Sigma_h, \\
    &\begin{multlined}
        \frac{1}{\Delta t} (\check{\mathbf{u}}^{n + 1}_h, \mathbf{v}_h) + \frac{\nu}{2} B_h (\check{\boldsymbol{\sigma}}^{n + 1}_h, \mathbf{v}_h) - \frac{1}{2} b^*_h (\check{p}^{n + 1}_h, \mathbf{v}_h) = -c_h (\bar{\boldsymbol{\sigma}}^{n + 1/2}_h, \bar{\mathbf{u}}^{n + 1/2}_h, \mathbf{v}_h), \quad \forall \mathbf{v}_h \in U_{h, 0},
    \end{multlined} \\
    &b_h (\check{\mathbf{u}}^{n + 1}_h, q_h) = 0, \quad \forall q_h \in P_{h, 0}.
\end{empheq}
\textbf{Sub-problem 3}:
\begin{align*}
    \operatorname{argmin} \quad & |w^{n + 1} - 1|, \\
    \text{s.t.} \quad & \alpha^{n + 1}_0 + \alpha^{n + 1}_1 w^{n + 1} + \alpha^{n + 1}_2 (w^{n + 1})^2 = 0,
\end{align*}
where
\begin{align*}
    \alpha^{n + 1}_0 &= \frac{\nu}{4} B_h (\boldsymbol{\sigma}^n_h + \hat{\boldsymbol{\sigma}}^{n + 1}_h, \mathbf{u}^n_h + \hat{\mathbf{u}}^{n + 1}_h) - \frac{1}{2} (\mathbf{f}^{n + 1/2}, \mathbf{u}^n_h + \hat{\mathbf{u}}^{n + 1}_h), \\
    \alpha^{n + 1}_1 &= -\frac{4 s^n}{\Delta t} \sqrt{E(\bar{\mathbf{u}}^{n + 1/2}_h) + \delta} + \frac{\nu}{4} B_h (\boldsymbol{\sigma}^n + \hat{\boldsymbol{\sigma}}^{n + 1}_h, \check{\mathbf{u}}^{n + 1}_h) \\ &\quad + \frac{\nu}{4} B_h (\check{\boldsymbol{\sigma}}^{n + 1}_h, \mathbf{u}^n_h + \hat{\mathbf{u}}^{n + 1}_h) - \frac{1}{2} (\mathbf{f}^{n + 1/2}, \check{\mathbf{u}}^{n + 1}_h), \\
    \alpha^{n + 1}_2 &= \frac{4}{\Delta t} (E(\bar{\mathbf{u}}^{n + 1/2}_h) + \delta) + \frac{\nu}{4} B_h (\check{\boldsymbol{\sigma}}^{n + 1}_h, \check{\mathbf{u}}^{n + 1}_h).
\end{align*}
The proof of the equivalence between the CN-SAV-SDG$_0$ scheme and the above three sub-problems follows the same argument in \cite{li2020error}, and is thus omitted here.

The local static condensation presented in Section \ref{sec 4} can also be applied here. Let $\mathsf{a}$ be the matrix corresponding to the bilinear form $(\mathbf{u}_h, \mathbf{v}_h)$, and $\mathsf{c}^{n + 1/2}$ be the vector corresponding to the linear functional $c_h (\bar{\boldsymbol{\sigma}}^{n + 1/2}_h, \bar{\mathbf{u}}^{n + 1/2}_h, \mathbf{v}_h)$. Then the algebraic systems of the sub-problems 1 and 2 can be condensed to
\begin{align*}
    &\begin{pmatrix}
        \tilde{\mathsf{A}} & - \tilde{\mathsf{B}}^{\mathsf{T}} & 0 \\
        \frac{\nu}{2} \tilde{\mathsf{B}} & \frac{1}{\Delta t} \mathsf{a} + \frac{\nu}{2} \tilde{\mathsf{a}} & - \frac{1}{2} \mathsf{b}^{\mathsf{T}} \\
        0 & \mathsf{b} & 0
    \end{pmatrix}
    \begin{pmatrix}
        \hat{\mathsf{\sigma}}^{n + 1}_O \\
        \hat{\mathsf{u}}^{n + 1} \\
        \hat{\mathsf{p}}^{n + 1}
    \end{pmatrix} \\
    &=
    \begin{pmatrix}
        0 \\
        \mathsf{F}^{n + 1/2} - \frac{\nu}{2} \tilde{\mathsf{B}} \mathsf{\sigma}^n_O + (\frac{1}{\Delta t} \mathsf{a} - \frac{\nu}{2} \tilde{\mathsf{a}}) \mathsf{u}^n + \frac{1}{2} \mathsf{b}^{\mathsf{T}} \mathsf{p}^n \\
        0
    \end{pmatrix},
\end{align*}
and
\begin{align*}
    \begin{pmatrix}
        \tilde{\mathsf{A}} & - \tilde{\mathsf{B}}^{\mathsf{T}} & 0 \\
        \frac{\nu}{2} \tilde{\mathsf{B}} & \frac{1}{\Delta t} \mathsf{a} + \frac{\nu}{2} \tilde{\mathsf{a}} & - \frac{1}{2} \mathsf{b}^{\mathsf{T}} \\
        0 & \mathsf{b} & 0
    \end{pmatrix}
    \begin{pmatrix}
        \check{\mathsf{\sigma}}^{n + 1}_O \\
        \check{\mathsf{u}}^{n + 1} \\
        \check{\mathsf{p}}^{n + 1}
    \end{pmatrix}
    =
    \begin{pmatrix}
        0 \\
        - \mathsf{c}^{n + 1/2} \\
        0
    \end{pmatrix}.
\end{align*}
For sub-problem 3, we evaluate the $B_h$-dependent terms using the static condensation formula:
\begin{align*}
    B_h (\boldsymbol{\sigma}_h, \mathbf{u}_h) = \mathsf{u}^{\mathsf{T}} \mathsf{B} \mathsf{\sigma} = \mathsf{u}^{\mathsf{T}} \tilde{\mathsf{B}} \mathsf{\sigma}_O + \mathsf{u}^{\mathsf{T}} \tilde{\mathsf{a}} \mathsf{u}.
\end{align*}

\begin{remark}
    A rigorous analysis of the CN-SAV framework was presented in \cite{li2020error}, establishing unconditional energy stability and second-order accuracy in time. Combined with our analyses of the Stokes discretization and the discrete convection term, these results extend directly to the CN-SAV-SDG$_0$ scheme. For brevity, the proofs are omitted and the performance of the scheme is validated numerically in Section~\ref{sec 7}.
\end{remark}

\section{Numerical Experiments} \label{sec 7}

In this section, we present several numerical experiments to validate the accuracy and robustness of the SDG$_0$ and CN-SAV-SDG$_0$ schemes for Stokes and Navier-Stokes equations, respectively. We also examine the performance of mass lumping as mentioned in Remark \ref{rmk: mass lumping}. Unless otherwise stated, the computational domain is $\Omega = [0, 1] \times [0, 1]$, and all tests are conducted on non-uniform grids generated by perturbing uniform grids with zero-mean random noise of amplitude $10\%$ of the mesh size.

\subsection{Accuracy Test} \label{sec: 7.1}

We first assess the accuracy of the SDG$_0$ scheme for Stokes equations. Set $\nu = 1$ and adopt the manufactured solution:
\begin{align*}
    u^x &= \pi x^2 (1 - x)^2 \sin(2 \pi y), \\
    u^y &= - 2x (1 - x) (1 - 2x) \sin^2(\pi y), \\
    p &= \sin(x) \cos(y) + (\cos(1) - 1) \sin(1).
\end{align*}
Tables \ref{tab: 7.1} and \ref{tab: 7.2} report the errors and convergence rates for the original SDG$_0$ discretization and its mass-lumped variant, respectively. Both yield comparable error magnitudes and exhibit second-order convergence for all variables.

We then evaluate the CN-SAV-SDG$_0$ scheme for Navier-Stokes equations. The time-dependent solution is obtained by multiplying the fields above by $\exp(-t)$. We set $T = 0.25$ and $\Delta t = h$. Tables \ref{tab: 7.3} and \ref{tab: 7.4} present the corresponding errors and convergence rates for the original and mass-lumped variants, again confirming second-order accuracy for all variables.

\begin{table}[htbp]
    \centering
    \caption{Errors and convergence rates of SDG$_0$ for Stokes equations.}
    \begin{tabular}{ccccccc}
        \toprule
        $n_x \times n_y$ & $\| \boldsymbol{\sigma} - \breve{\boldsymbol{\sigma}}_h \|_{l^2}$ & Rate & $\| \mathbf{u} - \mathbf{u}_h \|_{l^2}$ & Rate & $\| p - p_h \|_{l^2}$ & Rate \\ 
        \midrule
        $8 \times 8$ & 1.65e-01 & -- & 2.54e-02 & -- & 9.39e-02 & -- \\ 
        $16 \times 16$ & 4.52e-02 & 1.87 & 6.77e-03 & 1.91 & 3.05e-02 & 1.62 \\ 
        $32 \times 32$ & 1.19e-02 & 1.93 & 1.74e-03 & 1.96 & 8.49e-03 & 1.85 \\ 
        $64 \times 64$ & 3.01e-03 & 1.98 & 4.37e-04 & 2.00 & 2.16e-03 & 1.98 \\ 
        $128 \times 128$ & 7.70e-04 & 1.97 & 1.10e-04 & 2.00 & 5.45e-04 & 1.99 \\ 
        \bottomrule
    \end{tabular}
    \label{tab: 7.1}
\end{table}
\begin{table}[htbp]
    \centering
    \caption{Errors and convergence rates of mass-lumped SDG$_0$ for Stokes equations.}
    \begin{tabular}{ccccccc}
        \toprule
        $n_x \times n_y$ & $\| \boldsymbol{\sigma} - \breve{\boldsymbol{\sigma}}_h \|_{l^2}$ & Rate & $\| \mathbf{u} - \mathbf{u}_h \|_{l^2}$ & Rate & $\| p - p_h \|_{l^2}$ & Rate \\ 
        \midrule
        $8 \times 8$ & 8.95e-02 & -- & 9.24e-03 & -- & 6.39e-02 & -- \\ 
        $16 \times 16$ & 2.31e-02 & 1.95 & 2.33e-03 & 1.99 & 1.88e-02 & 1.77 \\ 
        $32 \times 32$ & 5.85e-03 & 1.98 & 5.96e-04 & 1.97 & 4.87e-03 & 1.95 \\ 
        $64 \times 64$ & 1.46e-03 & 2.00 & 1.50e-04 & 1.99 & 1.23e-03 & 1.99 \\ 
        $128 \times 128$ & 3.66e-04 & 2.00 & 3.74e-05 & 2.00 & 3.07e-04 & 2.00 \\ 
        \bottomrule
    \end{tabular}
    \label{tab: 7.2}
\end{table}
\begin{table}[htbp]
    \centering
    \caption{Errors and convergence rates of CN-SAV-SDG$_0$ for Navier-Stokes equations.}
    \begin{tabular}{ccccccc}
        \toprule
        $n_x \times n_y$ & $\| \boldsymbol{\sigma} - \breve{\boldsymbol{\sigma}}_h \|_{l^2}$ & Rate & $\| \mathbf{u} - \mathbf{u}_h \|_{l^2}$ & Rate & $\| p - p_h \|_{l^2}$ & Rate \\ 
        \midrule
        $8 \times 8$ & 1.31e-01 & -- & 2.31e-02 & -- & 6.17e-02 & -- \\ 
        $16 \times 16$ & 3.56e-02 & 1.88 & 5.47e-03 & 2.08 & 1.99e-02 & 1.63 \\ 
        $32 \times 32$ & 9.35e-03 & 1.93 & 1.38e-03 & 1.99 & 5.21e-03 & 1.93 \\ 
        $64 \times 64$ & 2.37e-03 & 1.98 & 3.45e-04 & 2.00 & 1.24e-03 & 2.07 \\ 
        $128 \times 128$ & 6.01e-04 & 1.98 & 8.61e-05 & 2.00 & 2.92e-04 & 2.09 \\ 
        \bottomrule
    \end{tabular}
    \label{tab: 7.3}
\end{table}
\begin{table}
    \centering
    \caption{Errors and convergence rates of mass-lumped CN-SAV-SDG$_0$ for Navier-Stokes equations.}
    \begin{tabular}{ccccccc}
        \toprule
        $n_x \times n_y$ & $\| \boldsymbol{\sigma} - \breve{\boldsymbol{\sigma}}_h \|_{l^2}$ & Rate & $\| \mathbf{u} - \mathbf{u}_h \|_{l^2}$ & Rate & $\| p - p_h \|_{l^2}$ & Rate \\ 
        \midrule
        $8 \times 8$ & 7.12e-02 & -- & 8.11e-03 & -- & 4.42e-02 & -- \\ 
        $16 \times 16$ & 1.79e-02 & 1.99 & 1.87e-03 & 2.11 & 1.11e-02 & 1.99 \\ 
        $32 \times 32$ & 4.54e-03 & 1.98 & 4.66e-04 & 2.01 & 2.77e-03 & 2.01 \\ 
        $64 \times 64$ & 1.14e-03 & 1.99 & 1.17e-04 & 2.00 & 6.53e-04 & 2.09 \\ 
        $128 \times 128$ & 2.85e-04 & 2.00 & 2.92e-05 & 2.00 & 1.88e-04 & 1.80 \\ 
        \bottomrule
    \end{tabular}
    \label{tab: 7.4}
\end{table}

\subsection{No-Flow Problem}

To assess the pressure robustness of the SDG$_0$ scheme, we consider a no-flow problem with a large pressure magnitude and small viscosity:
\begin{align*}
    \mathbf{u} \equiv \mathbf{0}, \qquad p = \frac{1}{\nu} (-3y^2 + 6y - 2),
\end{align*}
where $\nu = 10^{-3}$. Pressure robustness means that the velocity error is independent of both pressure and viscosity. Accordingly, the discrete velocity should vanish up to numerical precision. Figure~\ref{fig: 7.1} displays the computed velocity fields on a $64 \times 64$ grid, where the velocities are $O(10^{-11})$ in magnitude, confirming that our method accurately resolves the no-flow state. We further examine errors of the SDG$_0$ scheme and its mass-lumped variant over a range of mesh sizes. As shown in Figure~\ref{fig: 7.2}, both the the velocity and velocity-gradient errors are around $O(10^{-10})$ across all mesh sizes. These results provide strong evidence for the pressure robustness of our method.

\begin{figure}[htbp]
    \centering
    \begin{subfigure}[h]{0.48\textwidth}
        \centering
        \includegraphics{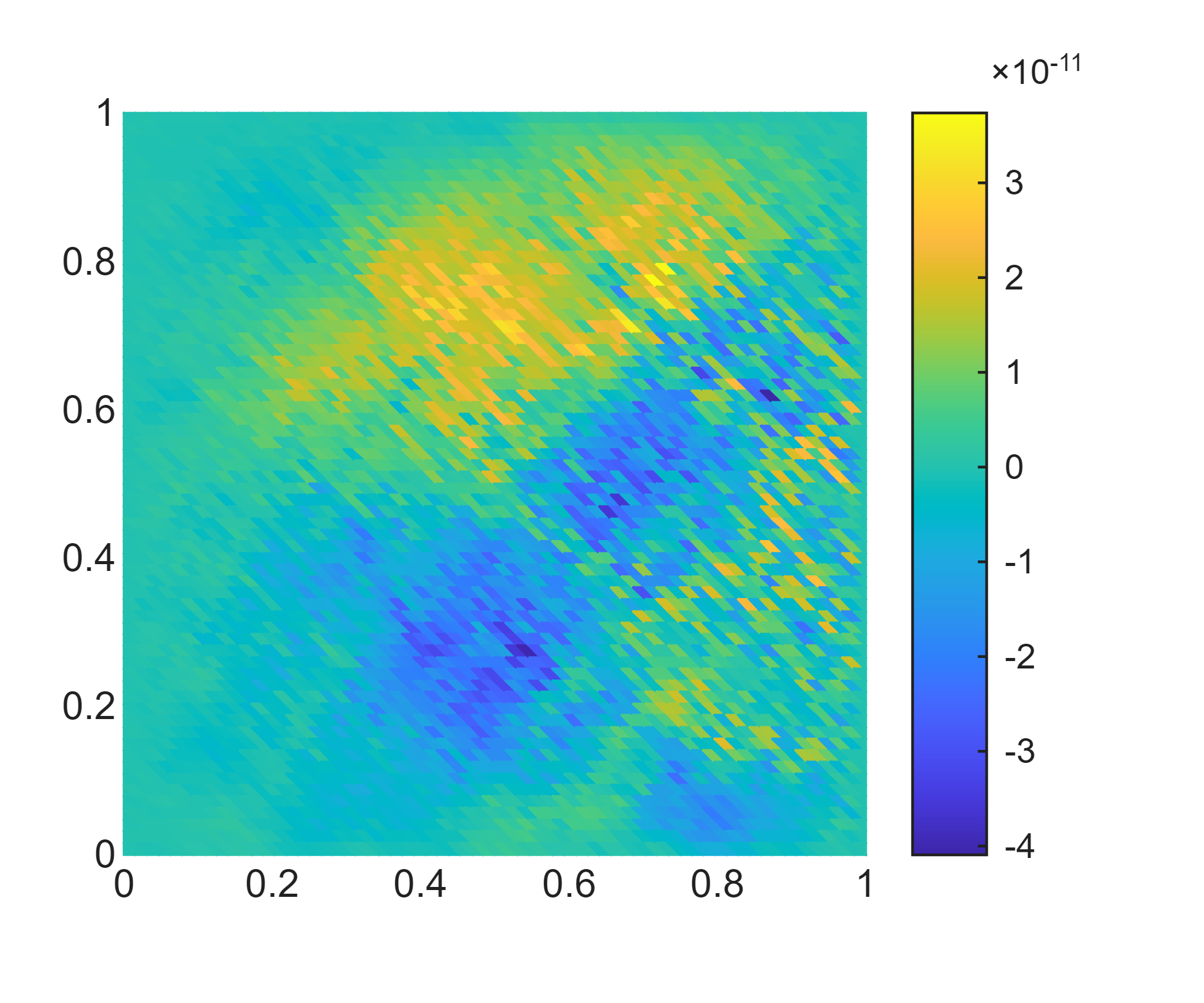}
        \caption{$u^x_h$.}
        \label{fig: 7.1.1}
    \end{subfigure}
    \begin{subfigure}[h]{0.48\textwidth}
        \centering
        \includegraphics{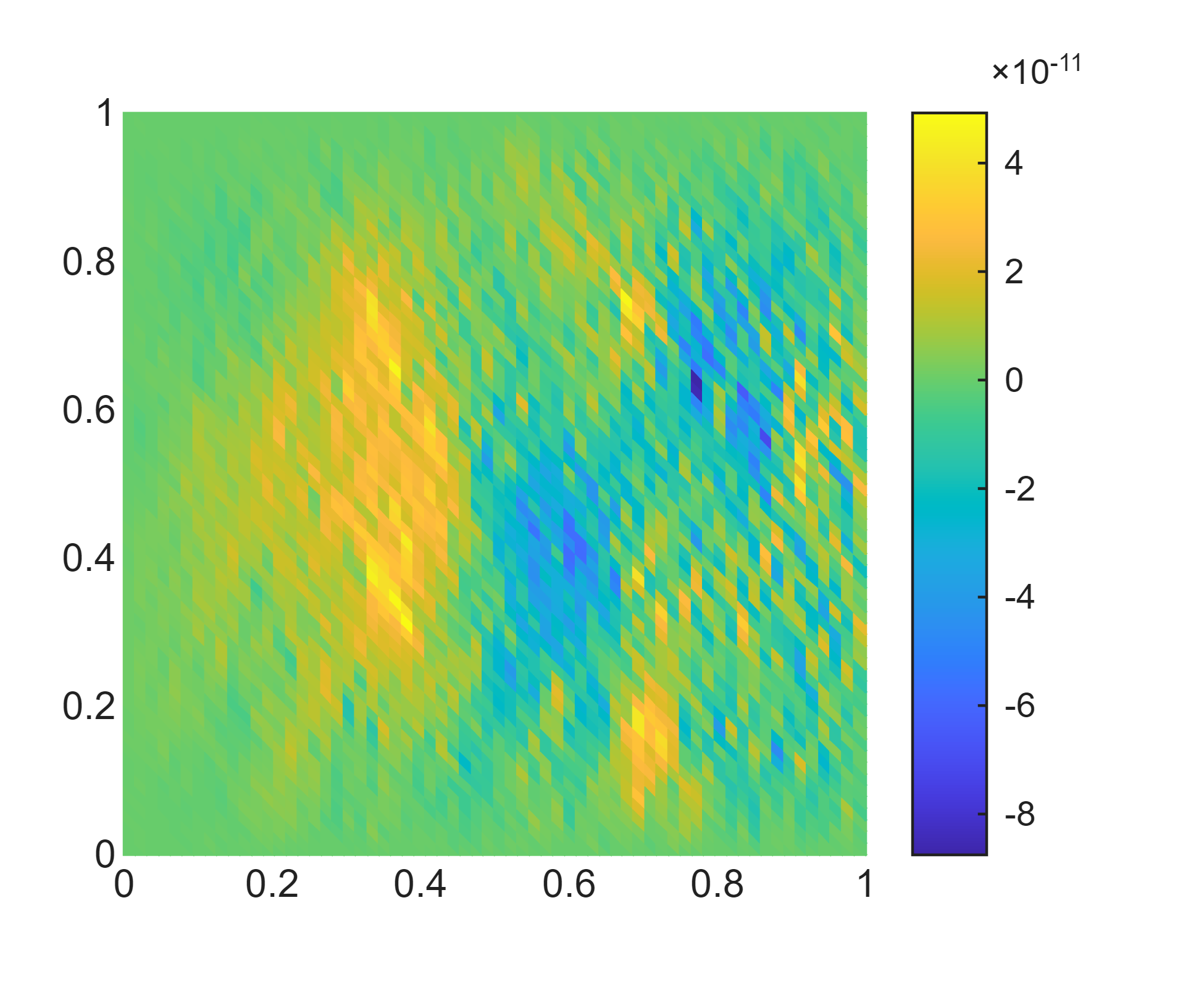}
        \caption{$u^y_h$.}
        \label{fig: 7.1.2}
    \end{subfigure}
    \caption{Numerical velocity fields for the no-flow problem.}
    \label{fig: 7.1}
\end{figure}
\begin{figure}[htbp]
    \centering
    \begin{subfigure}[h]{0.48\textwidth}
        \centering
        \includegraphics{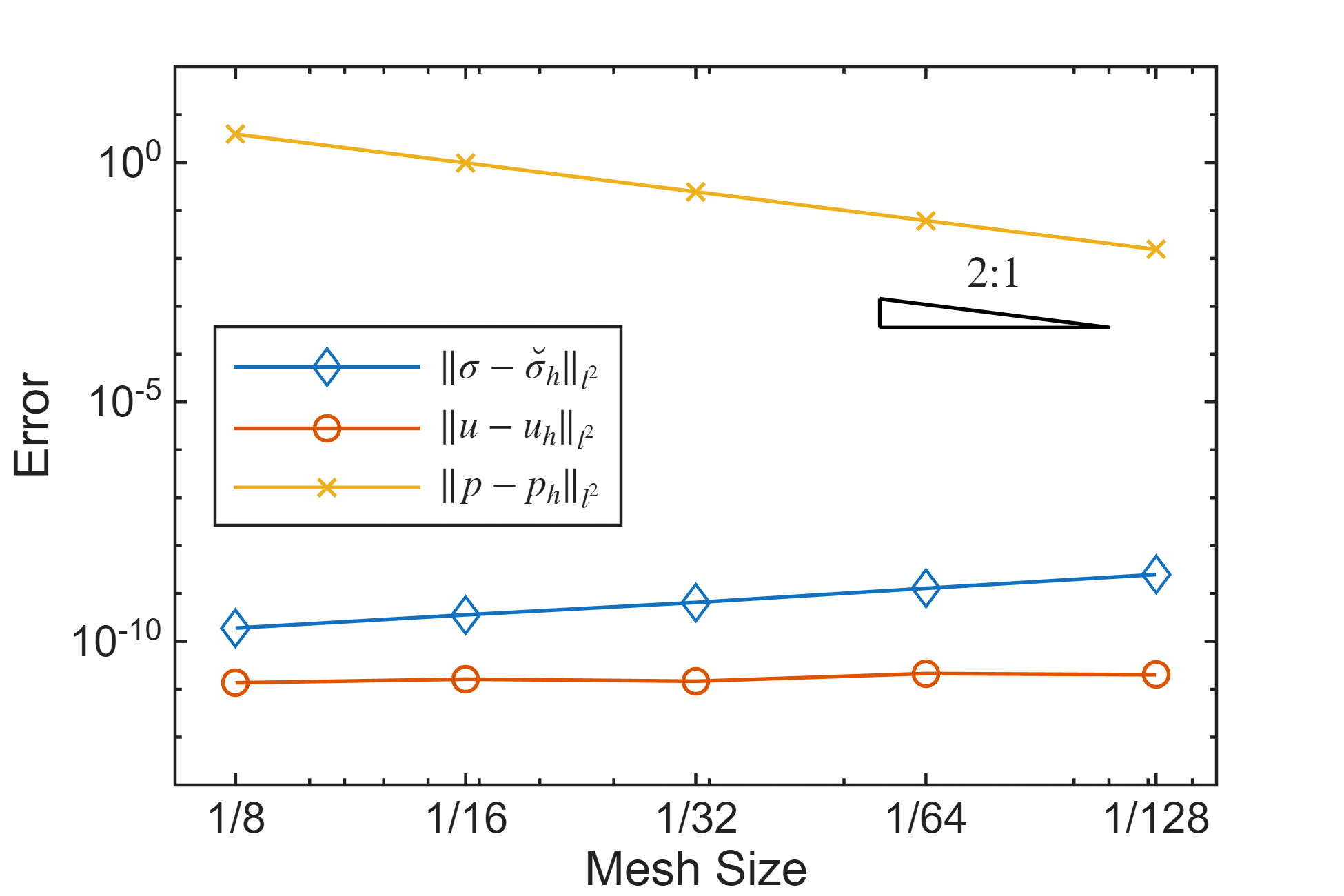}
        \caption{SDG$_0$.}
        \label{fig: 7.2.1}
    \end{subfigure}
    \begin{subfigure}[h]{0.48\textwidth}
        \centering
        \includegraphics{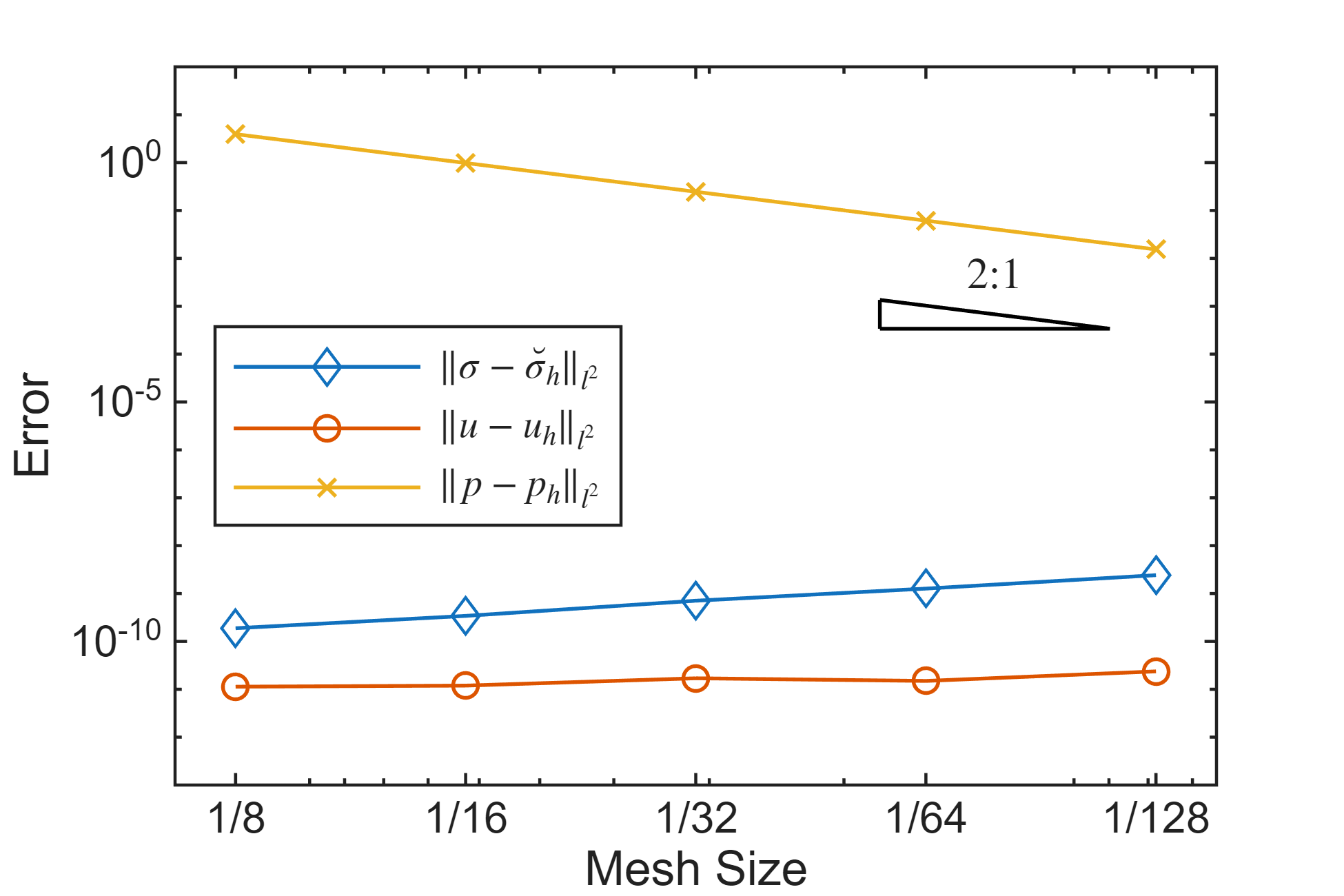}
        \caption{Mass-lumped SDG$_0$.}
        \label{fig: 7.2.2}
    \end{subfigure}
    \caption{Error convergence for the no-flow problem.}
    \label{fig: 7.2}
\end{figure}

\subsection{Taylor Vortex Flow}

We test the Taylor vortex flow to assess the accuracy of the CN-SAV-SDG$_0$ scheme for the Navier-Stokes equations with inhomogeneous boundary condition $\mathbf{u} |_{\partial \Omega} = \mathbf{g}$. The treatment of inhomogeneous boundary conditions follows a standard approach: seek $\mathbf{u}_h$ in
\begin{equation*}
    U_{h, g} = \{ \mathbf{v}_h \in U_h : \mathbf{v}_h \cdot \mathbf{n} = \mathbf{g} \cdot \mathbf{n} \text{ on } \partial \Omega \};
\end{equation*}
add the boundary contributions
\begin{align*}
    \sum_{e \in \partial \mathcal{E}_h} \int_e (\mathbf{g} \cdot \mathbf{t}) (\boldsymbol{\tau}_n \mathbf{n} \cdot \mathbf{t}) \, ds
\end{align*}
to the right side of \eqref{eq: CN-SAV-SDG 1}, and
\begin{align*}
    \frac{1}{2 \sqrt{E(\bar{\mathbf{u}}^{n + 1/2}_h) + \delta}} \int_{\partial \Omega} \frac{1}{2} | \mathbf{g}^{n + 1/2} |^2 \mathbf{g}^{n + 1/2} \cdot \mathbf{n} \, ds
\end{align*}
to the left side of \eqref{eq: CN-SAV-SDG 4}.

The Taylor vortex solution is
\begin{align*}
    u^x &= - \cos(\pi x) \sin(\pi y) \exp (-2 \pi^2 \nu t), \\
    u^y &= \sin(\pi x) \cos(\pi y) \exp (-2 \pi^2 \nu t), \\
    p &= - \frac{1}{4} (\cos(2 \pi x) + \cos(2 \pi y)) \exp (-4 \pi^2 \nu t).
\end{align*}
We set $\nu = 0.01$, $T = 1$ and $\Delta t = h$. Figure~\ref{fig: 7.3} shows the computed velocity and pressure fields on a $64 \times 64$ grid. Figure~\ref{fig: 7.4} reports the convergence histories of the CN-SAV-SDG$_0$ scheme and its mass-lumped variant, both of which exhibit second-order accuracy for all variables.

\begin{figure}[htbp]
    \centering
    \begin{subfigure}[h]{0.48\textwidth}
        \centering
        \includegraphics{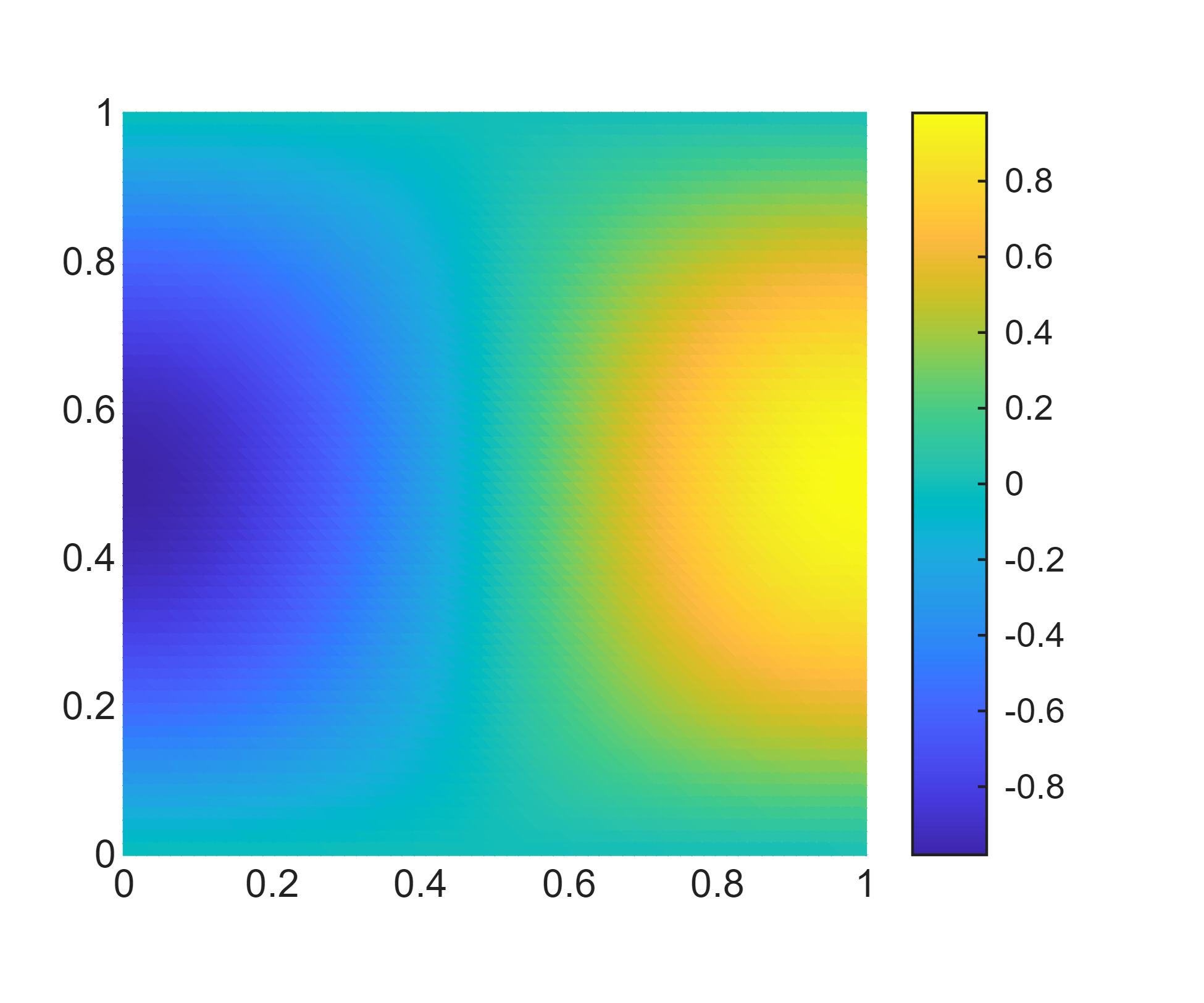}
        \caption{$u^x_h$.}
        \label{fig: 7.3.1}
    \end{subfigure}
    \begin{subfigure}[h]{0.48\textwidth}
        \centering
        \includegraphics{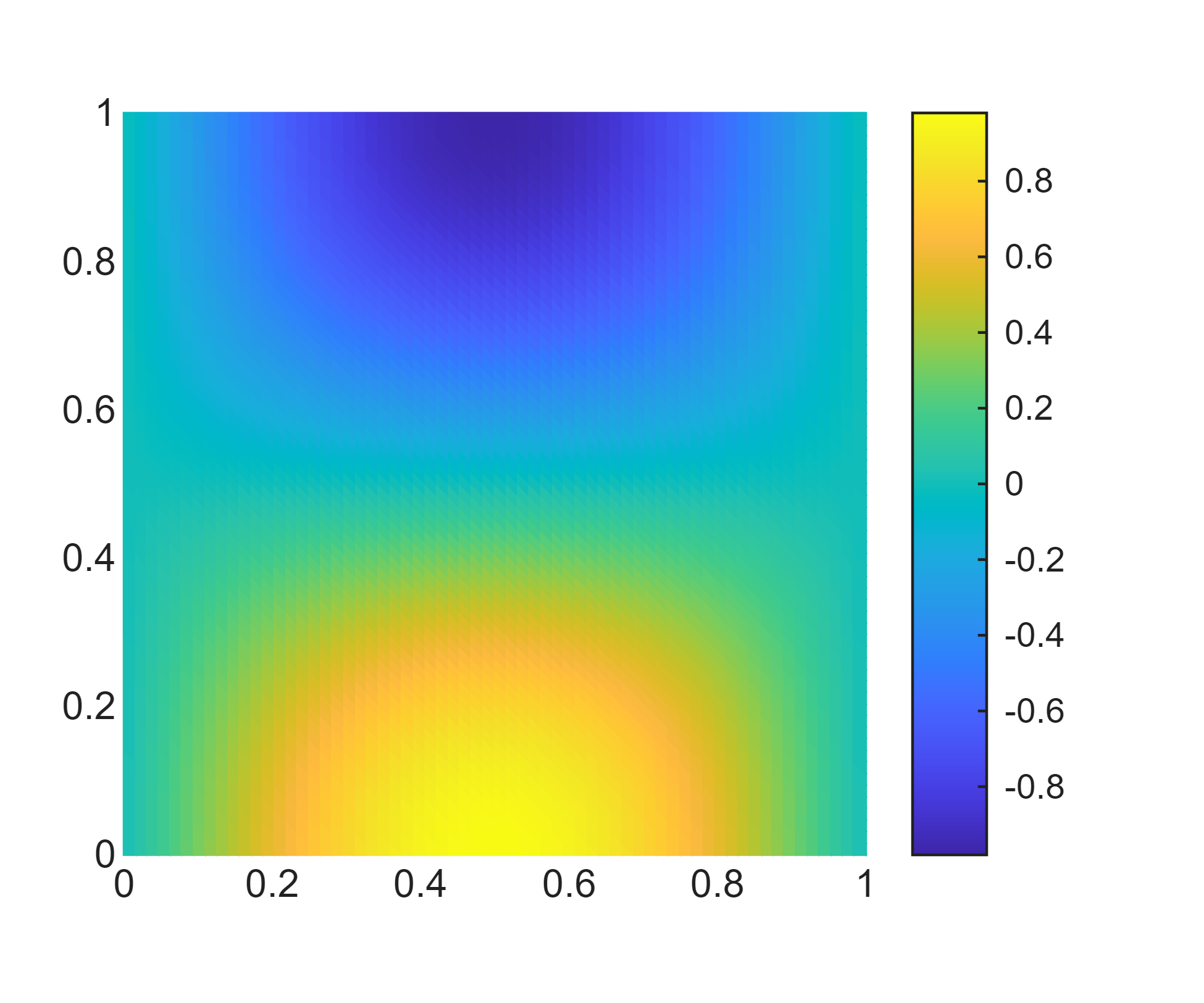}
        \caption{$u^y_h$.}
        \label{fig: 7.3.2}
    \end{subfigure}
    \begin{subfigure}[h]{0.48\textwidth}
        \centering
        \includegraphics{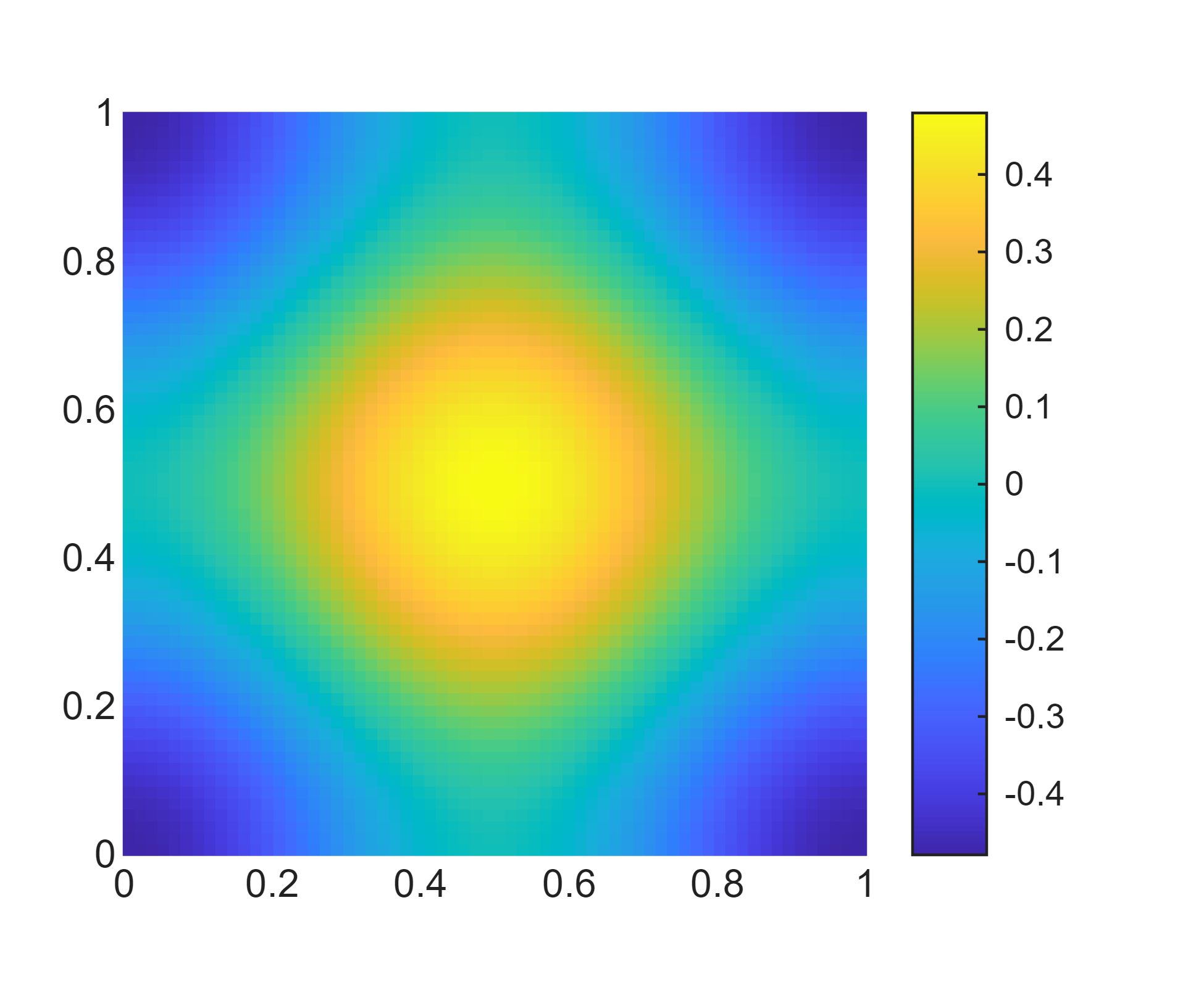}
        \caption{$p_h$.}
        \label{fig: 7.3.3}
    \end{subfigure}
    \caption{Numerical velocity and pressure fields for the Taylor vortex flow.}
    \label{fig: 7.3}
\end{figure}
\begin{figure}[htbp]
    \centering
    \begin{subfigure}[h]{0.48\textwidth}
        \centering
        \includegraphics{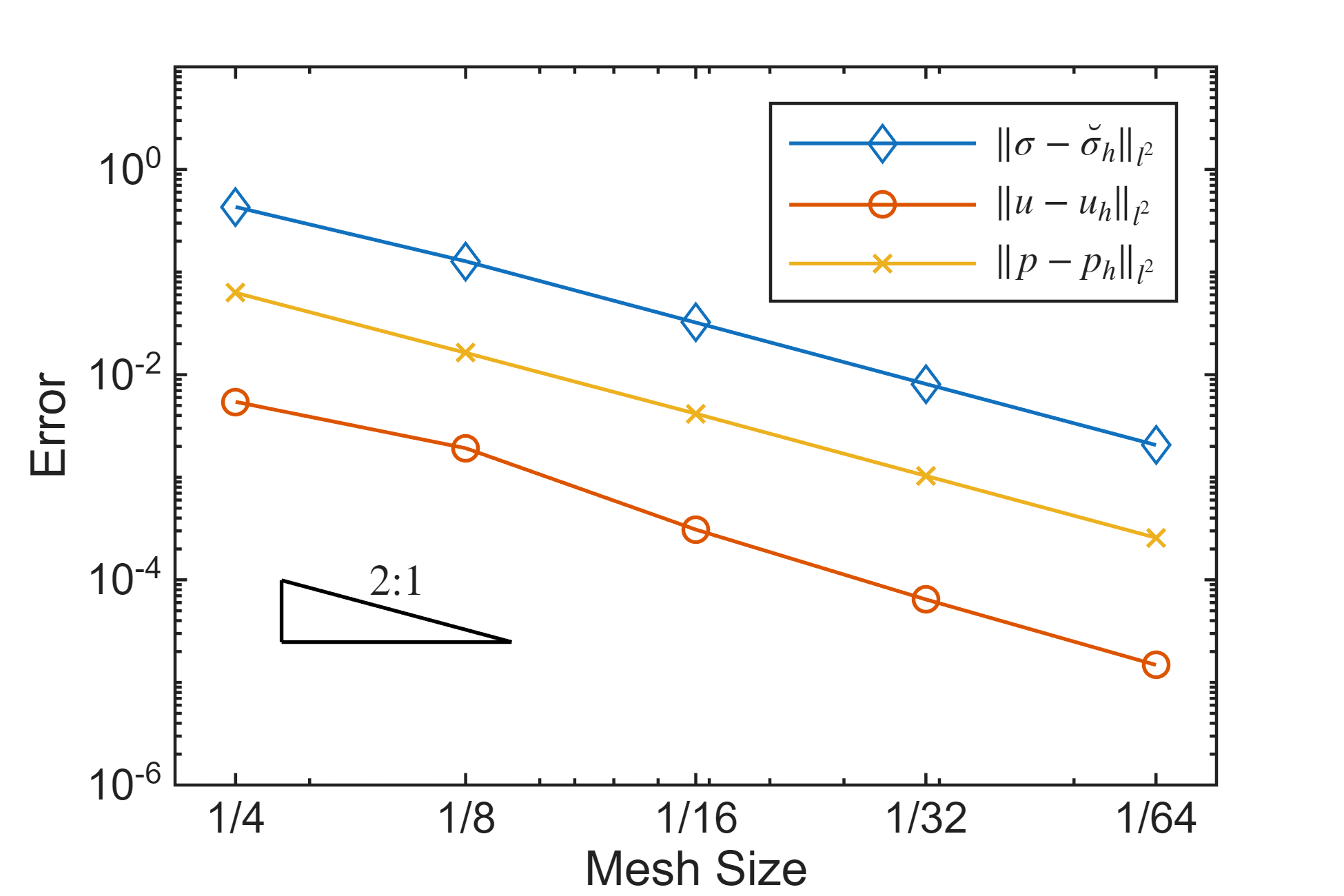}
        \caption{CN-SAV-SDG$_0$.}
        \label{fig: 7.4.1}
    \end{subfigure}
    \begin{subfigure}[h]{0.48\textwidth}
        \centering
        \includegraphics{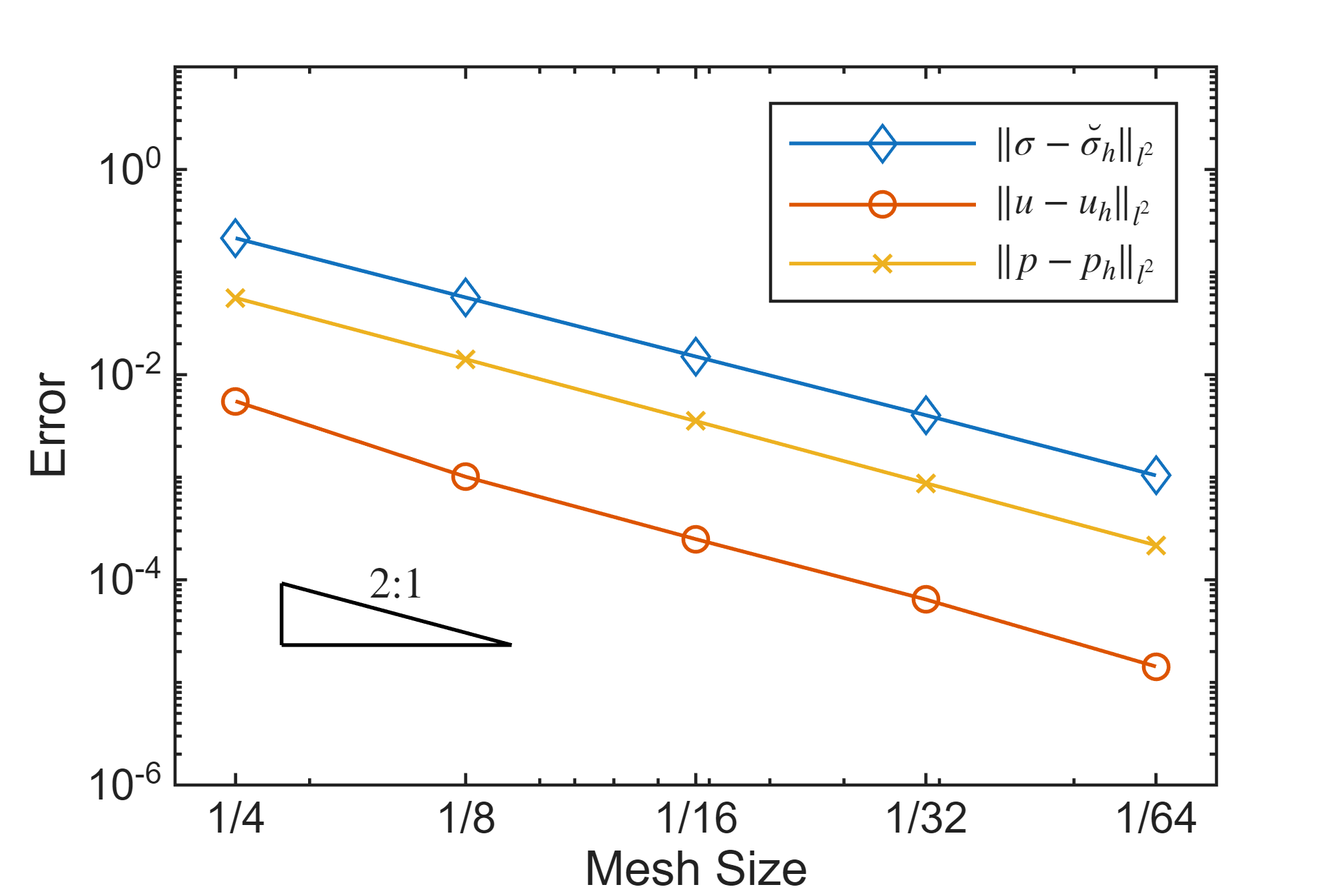}
        \caption{Mass-lumped CN-SAV-SDG$_0$.}
        \label{fig: 7.4.2}
    \end{subfigure}
    \caption{Error convergence for the Taylor vortex flow.}
    \label{fig: 7.4}
\end{figure}

\subsection{Lid-Driven Cavity Flow}

Finally, we consider a canonical benchmark: the lid-driven cavity flow \cite{ghia1982highre}. The fluid in the unit square is driven by a translating lid with unit speed along the top boundary, i.e., setting
\begin{align*}
    \mathbf{f} = \mathbf{0}, \quad
    u^x =
    \begin{cases}
        1, & y = 1, \\
        0, & \text{elsewhere on } \partial \Omega,
    \end{cases} \quad
    u^y = 0 \text{ on } \partial \Omega.
\end{align*}
Simulations are performed on a $64\times64$ grid and advanced in time until a steady state is reached. Figure~\ref{fig: 7.5} shows streamfunction contours for $\nu = 1/400$ and $\nu = 1/1000$, where the contour levels follow \cite{ghia1982highre}. The computed streamline patterns, including the primary vortex and secondary corner eddies, closely resemble the reference diagram in \cite{ghia1982highre}. Figures~\ref{fig: 7.6} and \ref{fig: 7.7} present the velocity profiles along the vertical and horizontal centerline, respectively, showing very good agreement with the benchmark data.

\begin{figure}[htbp]
    \centering
    \begin{subfigure}[h]{0.48\textwidth}
        \centering
        \includegraphics{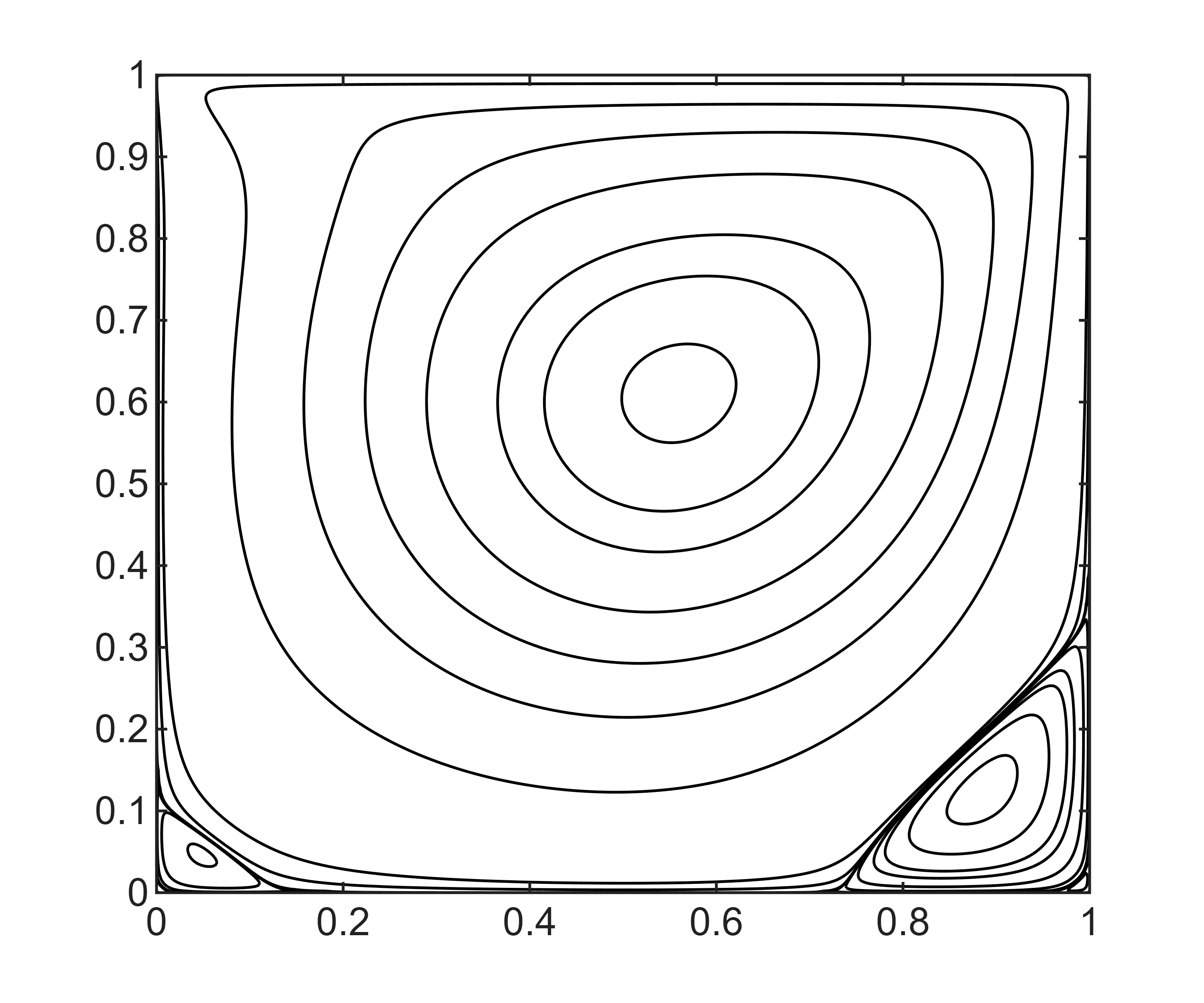}
        \caption{$\nu = 1/400$.}
        \label{fig: 7.5.1}
    \end{subfigure}
    \begin{subfigure}[h]{0.48\textwidth}
        \centering
        \includegraphics{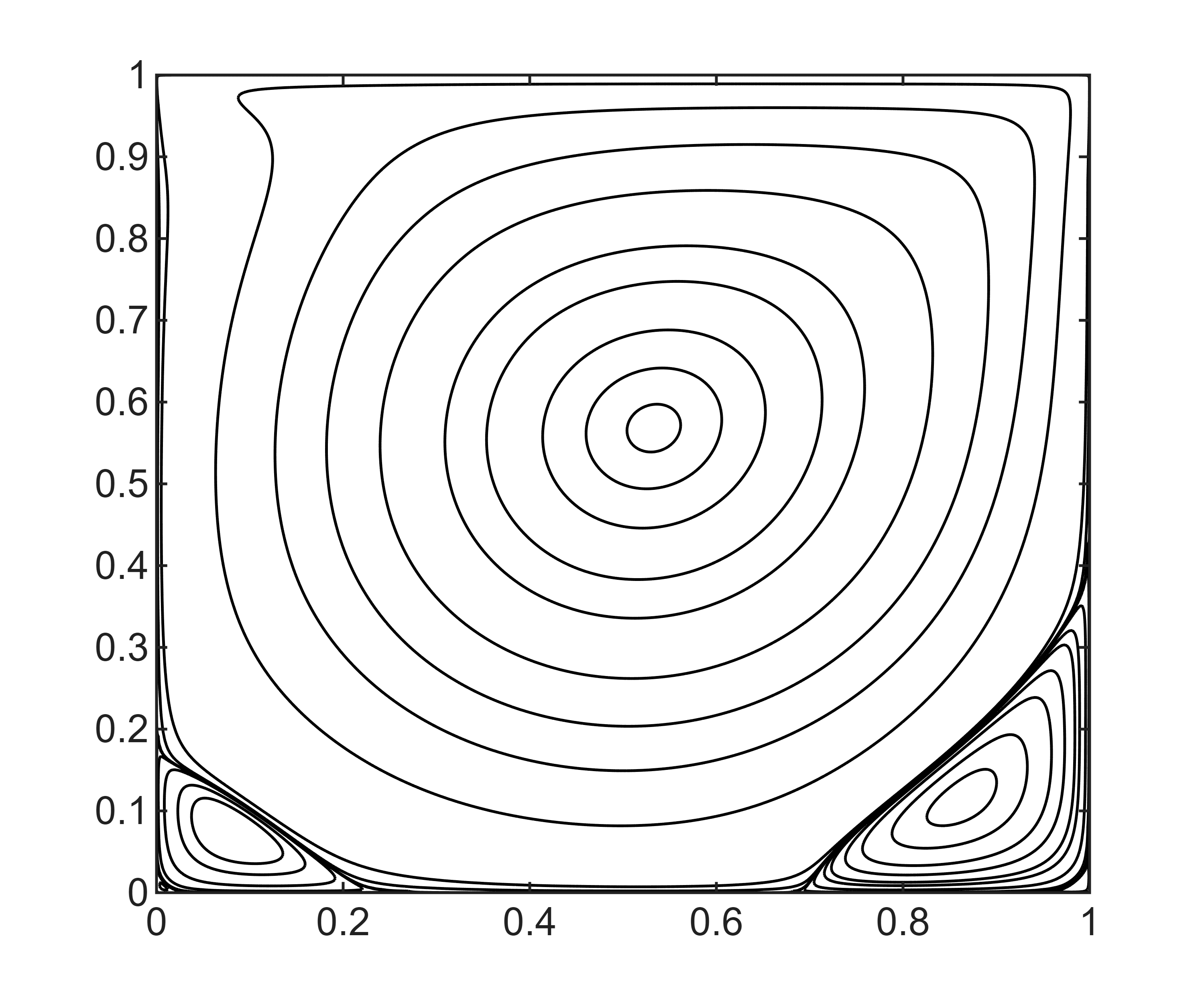}
        \caption{$\nu = 1/1000$.}
        \label{fig: 7.5.2}
    \end{subfigure}
    \caption{Streamline for the lid-driven cavity flow.}
    \label{fig: 7.5}
\end{figure}
\begin{figure}[htbp]
    \centering
    \begin{subfigure}[h]{0.48\textwidth}
        \centering
        \includegraphics{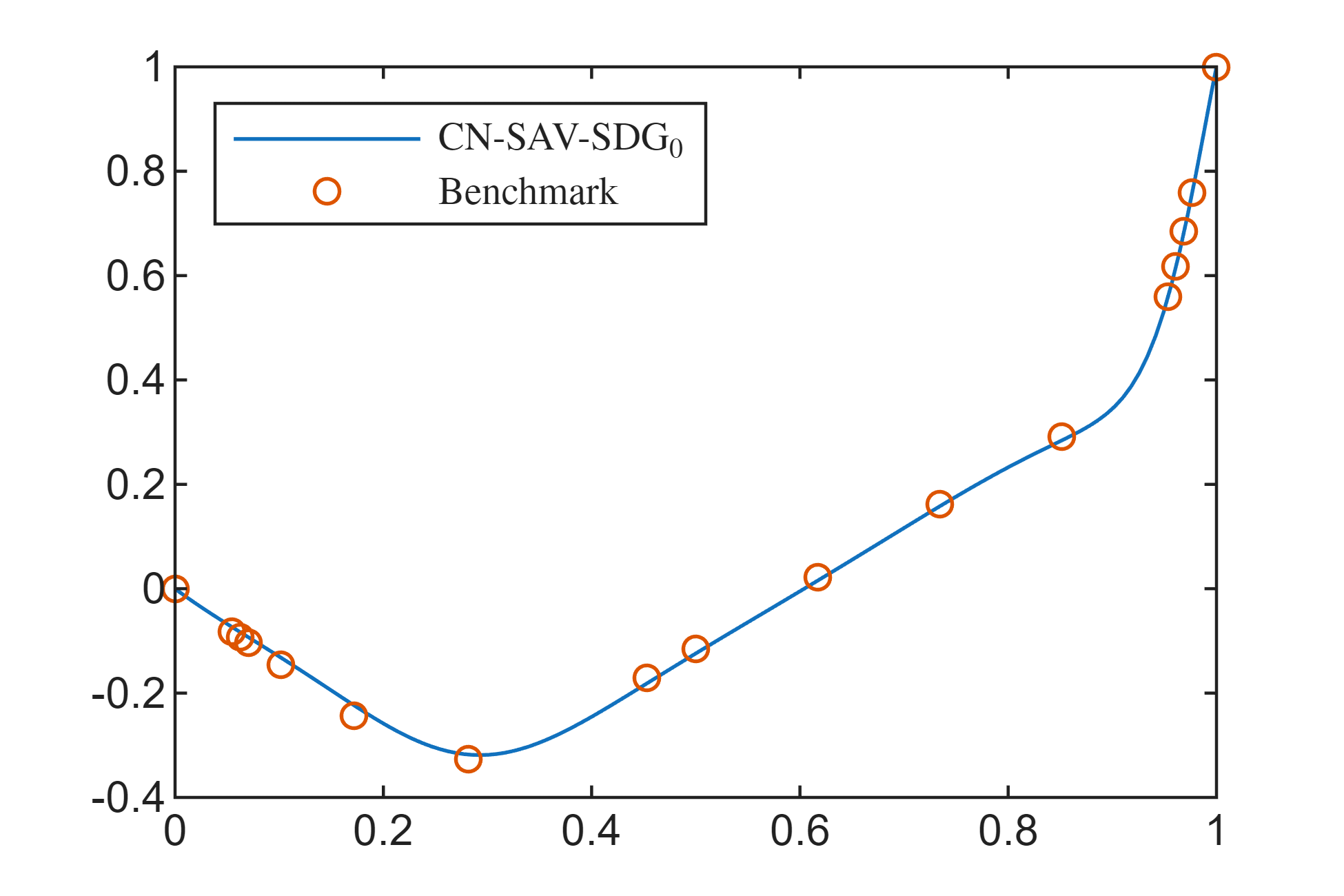}
        \caption{$\nu = 1/400$.}
        \label{fig: 7.6.1}
    \end{subfigure}
    \begin{subfigure}[h]{0.48\textwidth}
        \centering
        \includegraphics{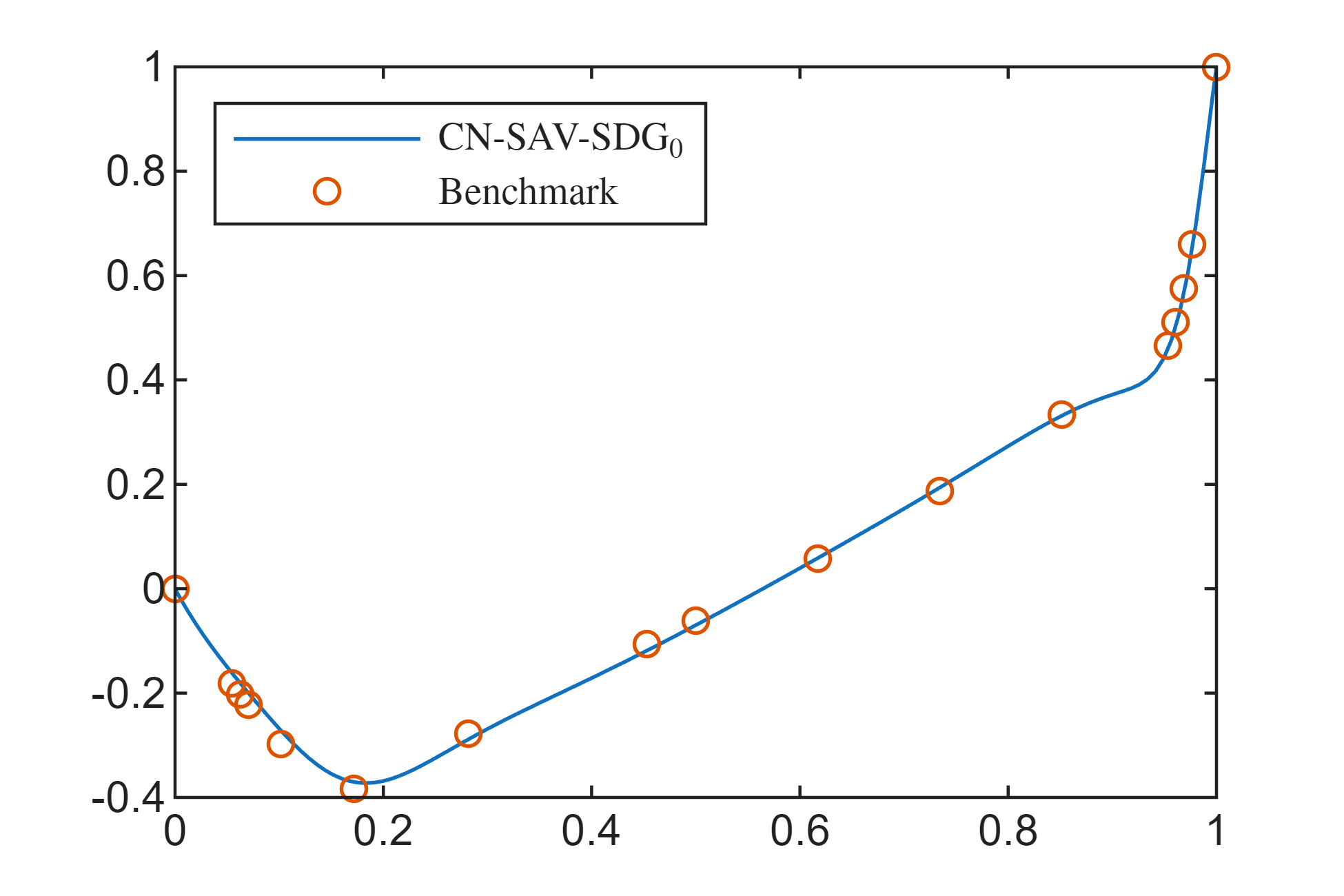}
        \caption{$\nu = 1/1000$.}
        \label{fig: 7.6.2}
    \end{subfigure}
    \caption{Profile of $u^x_h$ at $x = 0.5$ for the lid-driven cavity flow.}
    \label{fig: 7.6}
\end{figure}
\begin{figure}[htbp]
    \centering
    \begin{subfigure}[h]{0.48\textwidth}
        \centering
        \includegraphics{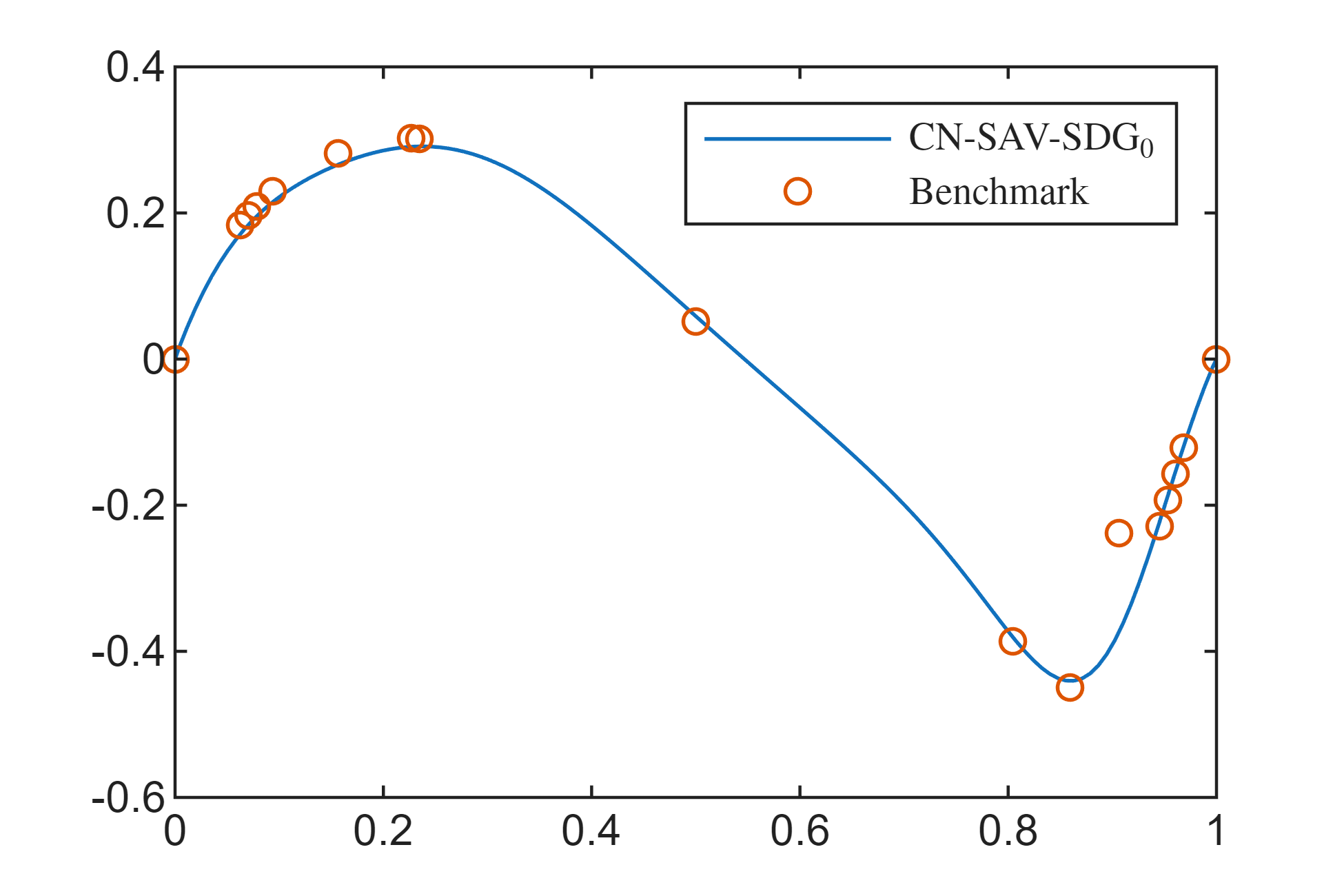}
        \caption{$\nu = 1/400$.}
        \label{fig: 7.7.1}
    \end{subfigure}
    \begin{subfigure}[h]{0.48\textwidth}
        \centering
        \includegraphics{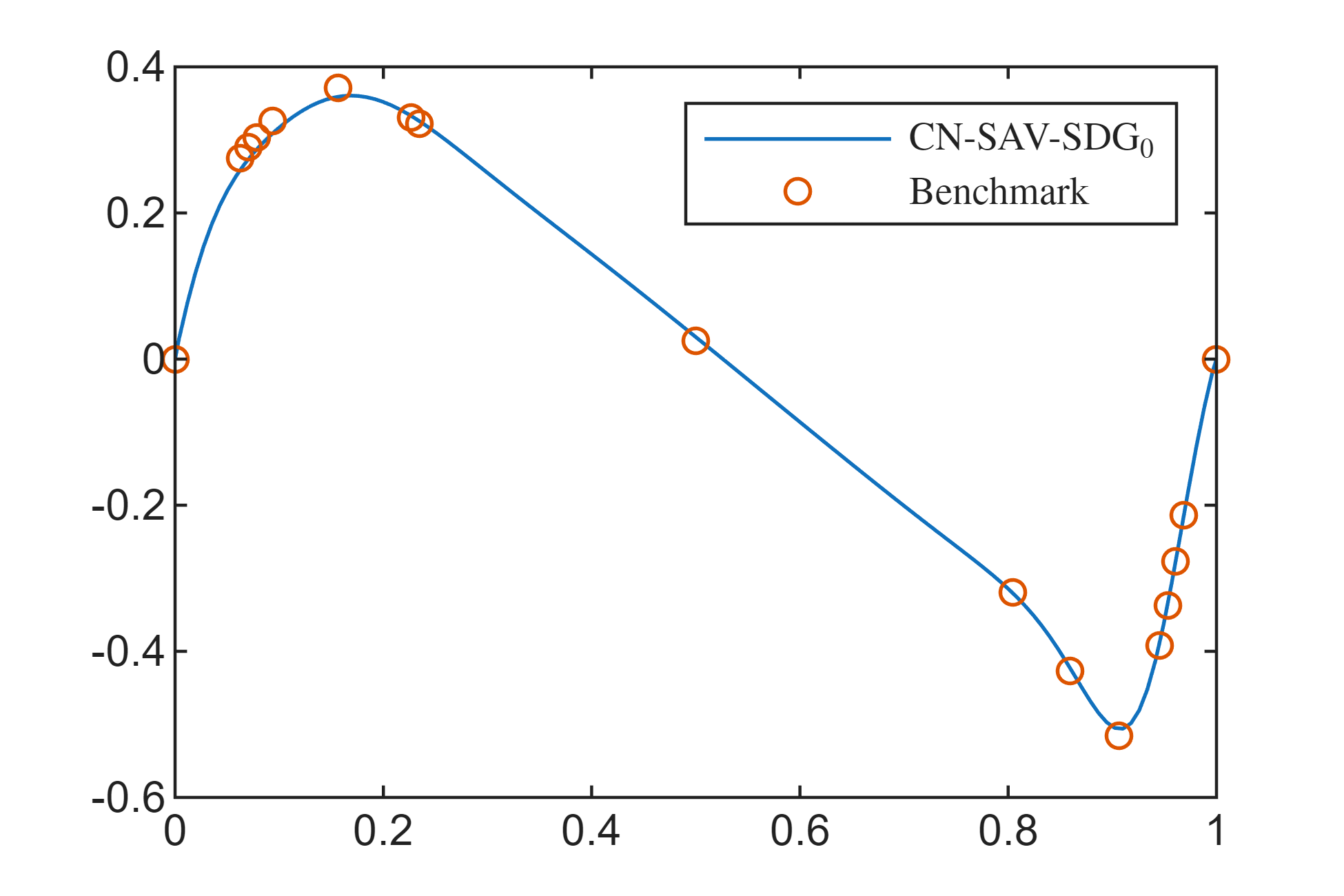}
        \caption{$\nu = 1/1000$.}
        \label{fig: 7.7.2}
    \end{subfigure}
    \caption{Profile of $u^y_h$ at $y = 0.5$ for the lid-driven cavity flow.}
    \label{fig: 7.7}
\end{figure}

\subsection{Performance Beyond the Theoretical Assumptions}

The theoretical analysis developed in this paper is based on two assumptions: sufficient smoothness of the exact solution \eqref{eq: solution regularity} and a bounded mesh aspect ratio \eqref{eq: mesh regularity}. Here, we deliberately relax each assumption in turn to investigate the behavior of the scheme outside its theoretical regime.

\subsubsection{Low Solution Regularity}

We solve the Stokes equations on the L-shaped domain $\Omega = (-1, 1)^2 \setminus \big( [0, 1) \times (-1, 0] \big)$, which has a re-entrant corner at the origin with interior angle $\omega = 3\pi/2$. Following the classical benchmark in \cite{wang2007new}, we set $\nu = 1$, $\mathbf{f} = \mathbf{0}$, and prescribe the exact solution in polar coordinates $(r,\theta)$ as
\begin{align*}
    \mathbf{u} &= r^{\lambda}
    \begin{bmatrix}
        (1 + \lambda) \sin\theta \, \psi(\theta) + \cos\theta \, \psi'(\theta) \\
        -(1 + \lambda) \cos\theta \, \psi(\theta) + \sin\theta \, \psi'(\theta)
    \end{bmatrix}, \\
    p &= -\frac{r^{\lambda - 1}}{1 - \lambda} \big[ (1 + \lambda)^2 \psi'(\theta) + \psi'''(\theta) \big],
\end{align*}
where
\begin{align*}
    \psi(\theta) = \frac{\sin((1 + \lambda)\theta) \cos(\lambda \omega)}{1 + \lambda}
    - \cos((1 + \lambda)\theta)
    - \frac{\sin((1 - \lambda)\theta) \cos(\lambda \omega)}{1 - \lambda}
    + \cos((1 - \lambda)\theta),
\end{align*}
and $\lambda \approx 0.54448373678246$ is the smallest positive root of $\sin(\lambda \omega) + \lambda \sin\omega = 0$. As shown in \cite{wang2007new}, this solution only satisfies $(\mathbf{u}, p) \in [H^{1+\lambda}(\Omega)]^2 \times H^\lambda(\Omega)$, so the regularity assumption in \eqref{eq: solution regularity} is violated, especially near the re-entrant corner.

Tables~\ref{tab: 7.5} and \ref{tab: 7.6} report the errors and convergence rates for the original and mass-lumped schemes. As discussed in Remark~\ref{rmk: solution regularity}, the reduced solution regularity limits the convergence behavior to that expected by the standard SDG theory. Specifically, the low-regularity a priori estimates in \cite{zhao2019staggered} imply convergence rates of $\lambda$ for the velocity gradient and the pressure, and of $2\lambda$ for the velocity. Our results are in close agreement with this estimate: the velocity gradient error converges at a rate close to $\lambda \approx 0.54$, while the velocity error exhibits the expected $2\lambda \approx 1.09$ rate. The pressure converges at a rate of about $0.85$, slightly better than the theoretical value of $\lambda$.

\begin{table}[htbp]
    \centering
    \caption{Errors and convergence rates of SDG$_0$ for Stokes equations on the L-shaped domain.}
    \begin{tabular}{ccccccc}
        \toprule
        $n_x \times n_y$ & $\| \boldsymbol{\sigma} - \breve{\boldsymbol{\sigma}}_h \|_{l^2}$ & Rate & $\| \mathbf{u} - \mathbf{u}_h \|_{l^2}$ & Rate & $\| p - p_h \|_{l^2}$ & Rate \\
        \midrule
        $8 \times 8$ & 1.36e+00 & -- & 1.62e-01 & -- & 2.93e+00 & -- \\
        $16 \times 16$ & 9.92e-01 & 0.45 & 1.00e-01 & 0.69 & 1.67e+00 & 0.81 \\
        $32 \times 32$ & 7.02e-01 & 0.50 & 5.43e-02 & 0.89 & 9.00e-01 & 0.89 \\
        $64 \times 64$ & 4.89e-01 & 0.52 & 2.75e-02 & 0.98 & 4.87e-01 & 0.89 \\
        $128 \times 128$ & 3.38e-01 & 0.53 & 1.35e-02 & 1.03 & 2.75e-01 & 0.83 \\
        \bottomrule
    \end{tabular}
    \label{tab: 7.5}
\end{table}
\begin{table}[htbp]
    \centering
    \caption{Errors and convergence rates of mass-lumped SDG$_0$ for Stokes equations on the L-shaped domain.}
    \begin{tabular}{ccccccc}
        \toprule
        $n_x \times n_y$ & $\| \boldsymbol{\sigma} - \breve{\boldsymbol{\sigma}}_h \|_{l^2}$ & Rate & $\| \mathbf{u} - \mathbf{u}_h \|_{l^2}$ & Rate & $\| p - p_h \|_{l^2}$ & Rate \\
        \midrule
        $8 \times 8$ & 1.37e+00 & -- & 1.63e-01 & -- & 2.95e+00 & -- \\
        $16 \times 16$ & 1.00e+00 & 0.45 & 1.01e-01 & 0.69 & 1.69e+00 & 0.80 \\
        $32 \times 32$ & 7.13e-01 & 0.49 & 5.47e-02 & 0.89 & 9.11e-01 & 0.89 \\
        $64 \times 64$ & 4.93e-01 & 0.53 & 2.79e-02 & 0.97 & 4.93e-01 & 0.89 \\
        $128 \times 128$ & 3.39e-01 & 0.54 & 1.36e-02 & 1.04 & 2.76e-01 & 0.83 \\
        \bottomrule
    \end{tabular}
    \label{tab: 7.6}
\end{table}

\subsubsection{Stretched Grids}

We next examine the robustness of the scheme with respect to the mesh aspect ratio $R = h^x/h^y$, using the test problem in Section~\ref{sec: 7.1}. Two settings are considered. First, we fix the aspect ratio at $R = 10$ and refine both directions simultaneously. Tables~\ref{tab: 7.7} and \ref{tab: 7.8} report the results for the original and mass-lumped schemes. All variables preserve second-order convergence, and the errors are not worse than those in the isotropic case (Tables~\ref{tab: 7.1} and \ref{tab: 7.2}). Second, we fix the partition in the $x$ direction with $n_x = 8$ and refine only the $y$ direction, so that the aspect ratio $R$ increases under refinement. Figure~\ref{fig: 7.8} plots the errors of all variables against $R$ for both schemes. The errors remain essentially unchanged as the mesh becomes increasingly anisotropic. These results demonstrate that the scheme is insensitive to the mesh aspect ratio.
\begin{table}[htbp]
    \centering
    \caption{Errors and convergence rates of SDG$_0$ for Stokes equations on stretched grids.}
    \begin{tabular}{ccccccc}
        \toprule
        $n_x \times n_y$ & $\| \boldsymbol{\sigma} - \breve{\boldsymbol{\sigma}}_h \|_{l^2}$ & Rate & $\| \mathbf{u} - \mathbf{u}_h \|_{l^2}$ & Rate & $\| p - p_h \|_{l^2}$ & Rate \\
        \midrule
        $4 \times 40$ & 3.00e-01 & -- & 4.99e-02 & -- & 6.55e-02 & -- \\
        $8 \times 80$ & 8.84e-02 & 1.77 & 1.37e-02 & 1.86 & 2.73e-02 & 1.26 \\
        $16 \times 160$ & 2.33e-02 & 1.92 & 3.54e-03 & 1.95 & 7.79e-03 & 1.81 \\
        $32 \times 320$ & 5.84e-03 & 2.00 & 8.91e-04 & 1.99 & 1.99e-03 & 1.97 \\
        $64 \times 640$ & 1.48e-03 & 1.98 & 2.24e-04 & 1.99 & 5.07e-04 & 1.97 \\
        \bottomrule
    \end{tabular}
    \label{tab: 7.7}
\end{table}
\begin{table}[htbp]
    \centering
    \caption{Errors and convergence rates of mass-lumped SDG$_0$ for Stokes equations on stretched grids.}
    \begin{tabular}{ccccccc}
        \toprule
        $n_x \times n_y$ & $\| \boldsymbol{\sigma} - \breve{\boldsymbol{\sigma}}_h \|_{l^2}$ & Rate & $\| \mathbf{u} - \mathbf{u}_h \|_{l^2}$ & Rate & $\| p - p_h \|_{l^2}$ & Rate \\
        \midrule
        $4 \times 40$ & 2.69e-01 & -- & 2.19e-02 & -- & 2.23e-01 & -- \\
        $8 \times 80$ & 7.86e-02 & 1.78 & 6.86e-03 & 1.68 & 7.77e-02 & 1.52 \\
        $16 \times 160$ & 2.17e-02 & 1.85 & 1.94e-03 & 1.82 & 2.23e-02 & 1.80 \\
        $32 \times 320$ & 5.52e-03 & 1.98 & 4.99e-04 & 1.96 & 5.68e-03 & 1.97 \\
        $64 \times 640$ & 1.39e-03 & 1.99 & 1.26e-04 & 1.98 & 1.43e-03 & 1.99 \\
        \bottomrule
    \end{tabular}
    \label{tab: 7.8}
\end{table}
\begin{figure}[htbp]
    \centering
    \begin{subfigure}[h]{0.48\textwidth}
        \centering
        \includegraphics{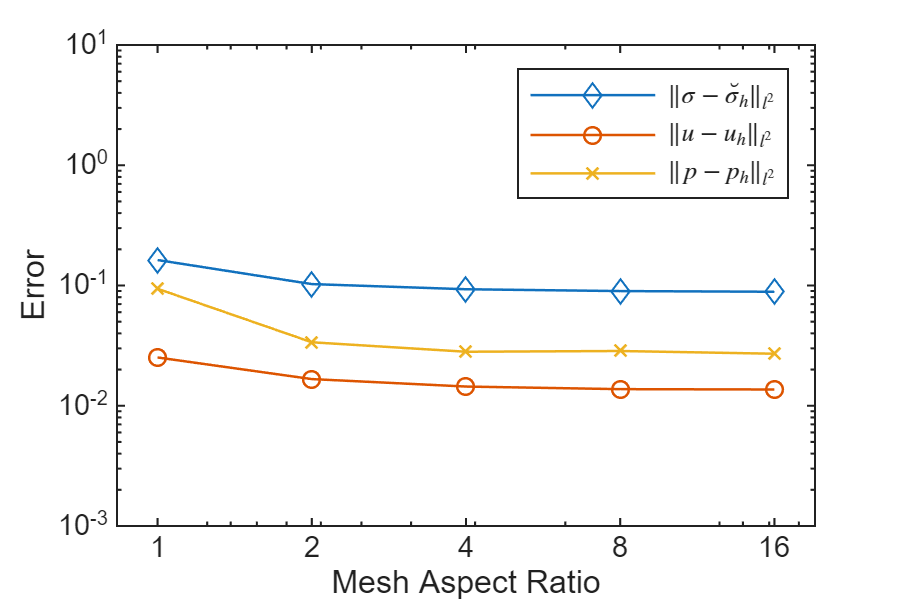}
        \caption{SDG$_0$.}
        \label{fig: 7.8.1}
    \end{subfigure}
    \begin{subfigure}[h]{0.48\textwidth}
        \centering
        \includegraphics{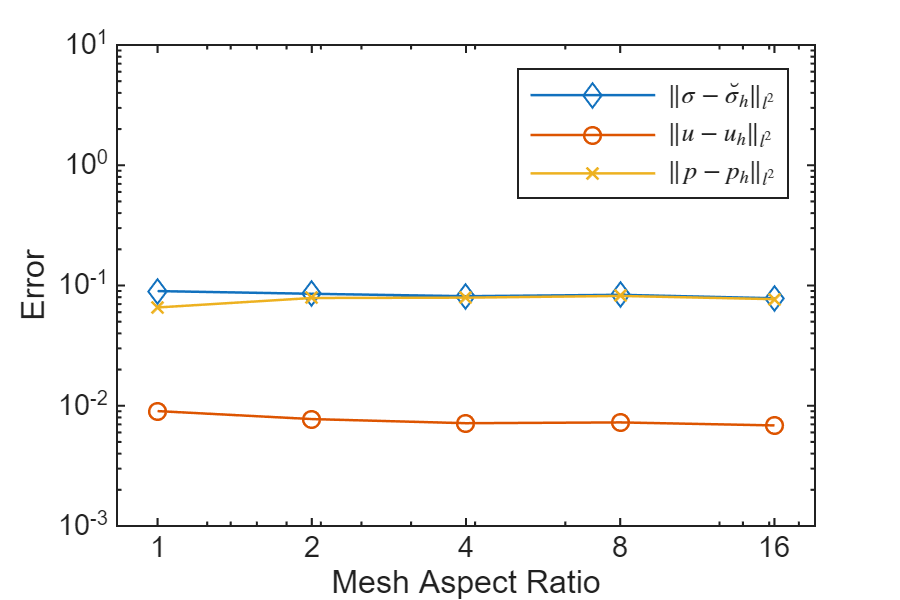}
        \caption{Mass-lumped SDG$_0$.}
        \label{fig: 7.8.2}
    \end{subfigure}
    \caption{Error evolution w.r.t. mesh aspect ratio.}
    \label{fig: 7.8}
\end{figure}

\subsection{Curved Domains and Non-Cartesian Meshes}

All experiments above are carried out on rectangular domains and Cartesian grids. Here we consider curved domains and quadrilateral meshes that are smooth diffeomorphic images of a rectangle and a Cartesian grid. Specifically, a Cartesian grid on the reference rectangle $\hat{\Omega} = [0,1]^2$ is mapped by a smooth transformation $\Phi$ to a body-fitted quadrilateral mesh on the physical domain $\Omega = \Phi(\hat{\Omega})$. The staggered construction in Section~\ref{sec 2} carries over directly. For the discrete scheme, the explicit pointwise formulation in Section~\ref{sec 4} is no longer available, and we therefore revert to the variational form in Section~\ref{sec 3}. In addition, the pointwise $l^2$ norms in Section~\ref{sec 5} are modified as
\begin{align*}
    \| \boldsymbol{\sigma}_h \|^2_{l^2} &= \sum_{(i, j^+) \in \Lambda^V_h} h^x_i h^y_{j^+} (\boldsymbol{\sigma}_h \mathbf{n} \cdot \mathbf{t})^2 |_{(x_i, y_{j^+})} + \sum_{(i^+, j) \in \Lambda^H_h} h^x_{i^+} h^y_j (\boldsymbol{\sigma}_h \mathbf{n} \cdot \mathbf{t})^2 |_{(x_{i^+}, y_j)} \\ &\quad + \sum_{(i^+, j^+) \in \Lambda^D_h} h^x_{i^+} h^y_{j^+} (\boldsymbol{\sigma}_h \mathbf{n})^2 |_{(x_{i^+}, y_{j^+})}, \\
    \| \mathbf{u}_h \|^2_{l^2} &= \sum_{(i, j^+) \in \Lambda^V_h} h^x_i h^y_{j^+} (\mathbf{u}_h \cdot \mathbf{n})^2 |_{(x_i, y_{j^+})} + \sum_{(i^+, j) \in \Lambda^H_h} h^x_{i^+} h^y_j (\mathbf{u}_h \cdot \mathbf{n})^2 |_{(x_{i^+}, y_j)}, \\
    \| p_h \|^2_{l^2} &= \sum_{(i^+, j^+) \in \Lambda^D_h} h^x_{i^+} h^y_{j^+} (p_h)^2 |_{(x_{i^+}, y_{j^+})}.
\end{align*}

\paragraph{Distorted quadrilateral meshes}
Consider the sinusoidal map used in \cite{kreeft2013mixed}:
\begin{equation*}
    \Phi(\xi, \eta) = \Big( \frac{1}{2} + \frac{1}{2} \big( \xi + \frac{1}{5} \sin(\pi \xi) \sin(\pi \eta) \big), \ \frac{1}{2} + \frac{1}{2} \big( \eta + \frac{1}{5} \sin(\pi \xi) \sin(\pi \eta) \big) \Big).
\end{equation*}
This mapping preserves the straight boundary and transforms the Cartesian grid into a distorted quadrilateral mesh as shown in Figure~\ref{fig: 7.9}. Set $\nu = 1$ and adopt the classical Taylor-Green vortex as the solution,
\begin{align*}
    u^x &= - \cos(\pi x) \sin(\pi y), \\
    u^y &= \sin(\pi x) \cos(\pi y), \\
    p &= -\frac{1}{4} \big( \cos(2 \pi x) + \cos(2 \pi y) \big).
\end{align*}
The errors and convergence rates are summarized in Table~\ref{tab: 7.9}, which indicates that all variables achieve almost second-order convergence on such distorted quadrilateral meshes.

\begin{figure}[htbp]
    \centering
    \begin{subfigure}[h]{0.4\textwidth}
        \centering
        \includegraphics{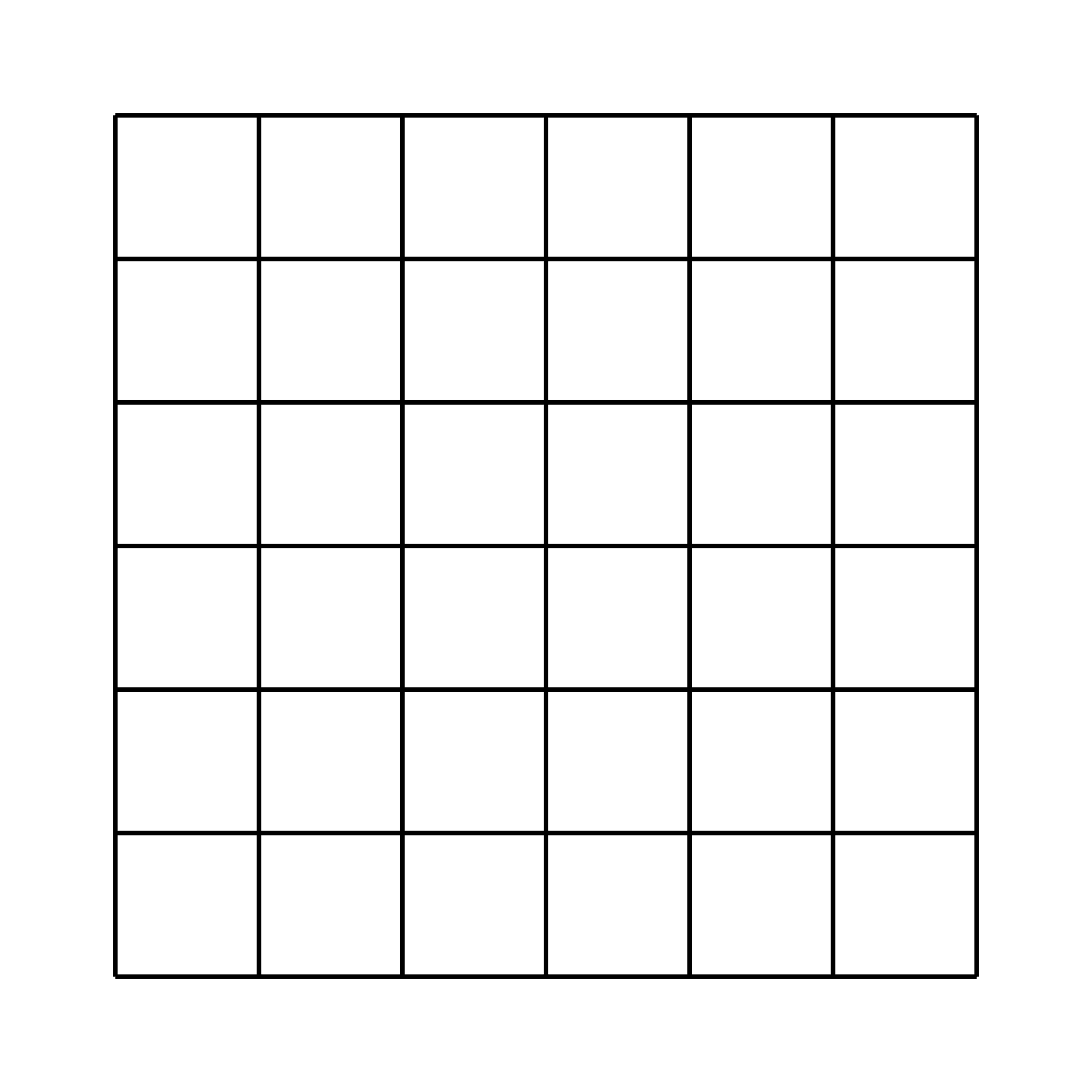}
        \caption{Reference grid.}
        \label{fig: 7.9.1}
    \end{subfigure}
    $\to$
    \begin{subfigure}[h]{0.4\textwidth}
        \centering
        \includegraphics{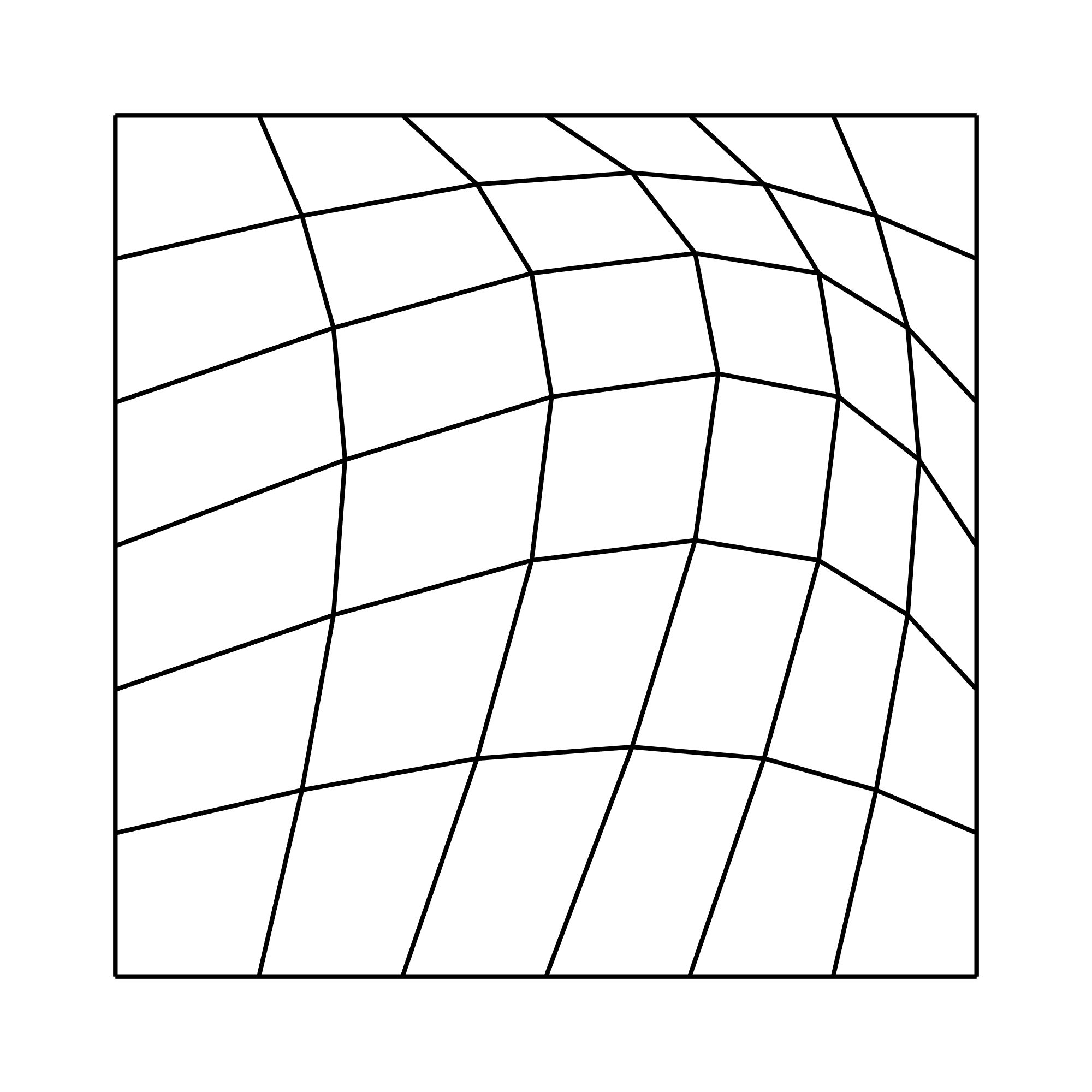}
        \caption{Physical mesh.}
        \label{fig: 7.9.2}
    \end{subfigure}
    \caption{Cartesian reference grid and its image under the sinusoidal mapping.}
    \label{fig: 7.9}
\end{figure}

\begin{table}[htbp]
    \centering
    \caption{Errors and convergence rates of SDG$_0$ for Stokes equations on the distorted quadrilateral mesh.}
    \begin{tabular}{ccccccc}
        \toprule
        $n_x \times n_y$ & $\| \boldsymbol{\sigma} - \boldsymbol{\sigma}_h \|_{l^2}$ & Rate & $\| \mathbf{u} - \mathbf{u}_h \|_{l^2}$ & Rate & $\| p - p_h \|_{l^2}$ & Rate \\
        \midrule
        $8 \times 8$ & 7.63e-02 & -- & 3.16e-03 & -- & 7.98e-02 & -- \\
        $16 \times 16$ & 2.15e-02 & 1.83 & 9.67e-04 & 1.71 & 2.41e-02 & 1.72 \\
        $32 \times 32$ & 5.83e-03 & 1.88 & 2.58e-04 & 1.91 & 7.02e-03 & 1.78 \\
        $64 \times 64$ & 1.56e-03 & 1.91 & 6.61e-05 & 1.96 & 2.00e-03 & 1.81 \\
        $128 \times 128$ & 4.11e-04 & 1.92 & 1.67e-05 & 1.99 & 5.61e-04 & 1.84 \\
        \bottomrule
    \end{tabular}
    \label{tab: 7.9}
\end{table}

\paragraph{Annulus domain}
We next consider the full annulus $\Omega = \{ (x, y) : R_1^2 < x^2 + y^2 < R_2^2 \}$ with $R_1 = 1$ and $R_2 = 2$, which is obtained from the reference square by the polar map (see Figure~\ref{fig: 7.10}):
\begin{equation*}
    \Phi(\xi, \eta) = (r \cos \theta, r \sin \theta), \quad r = R_1 + \xi (R_2 - R_1), \quad \theta = 2 \pi \eta.
\end{equation*}
Set $\nu = 1$ and adopt the classical co-axial Couette flow \cite{drazin2006navierstokes}, for which the exact solution in polar coordinates is
\begin{align*}
    u_r = 0, \quad u_\theta = \frac{2}{3} \Big( r - \frac{1}{r} \Big), \quad p = \frac{2}{9} r^2 - \frac{8}{9} \ln r - \frac{2}{9 r^2}.
\end{align*}
Table~\ref{tab: 7.10} reports the errors and convergence rates. The velocity and velocity gradient converge at second order, while the pressure converges at only about first order. This may be related to the geometric error caused by approximating the curved boundary with straight edges. For this tangential flow, such an error may affect the pressure through the incompressibility constraint, whereas the velocity remains protected by the pressure robustness of the scheme.

\begin{figure}[htbp]
    \centering
    \begin{subfigure}[h]{0.4\textwidth}
        \centering
        \includegraphics{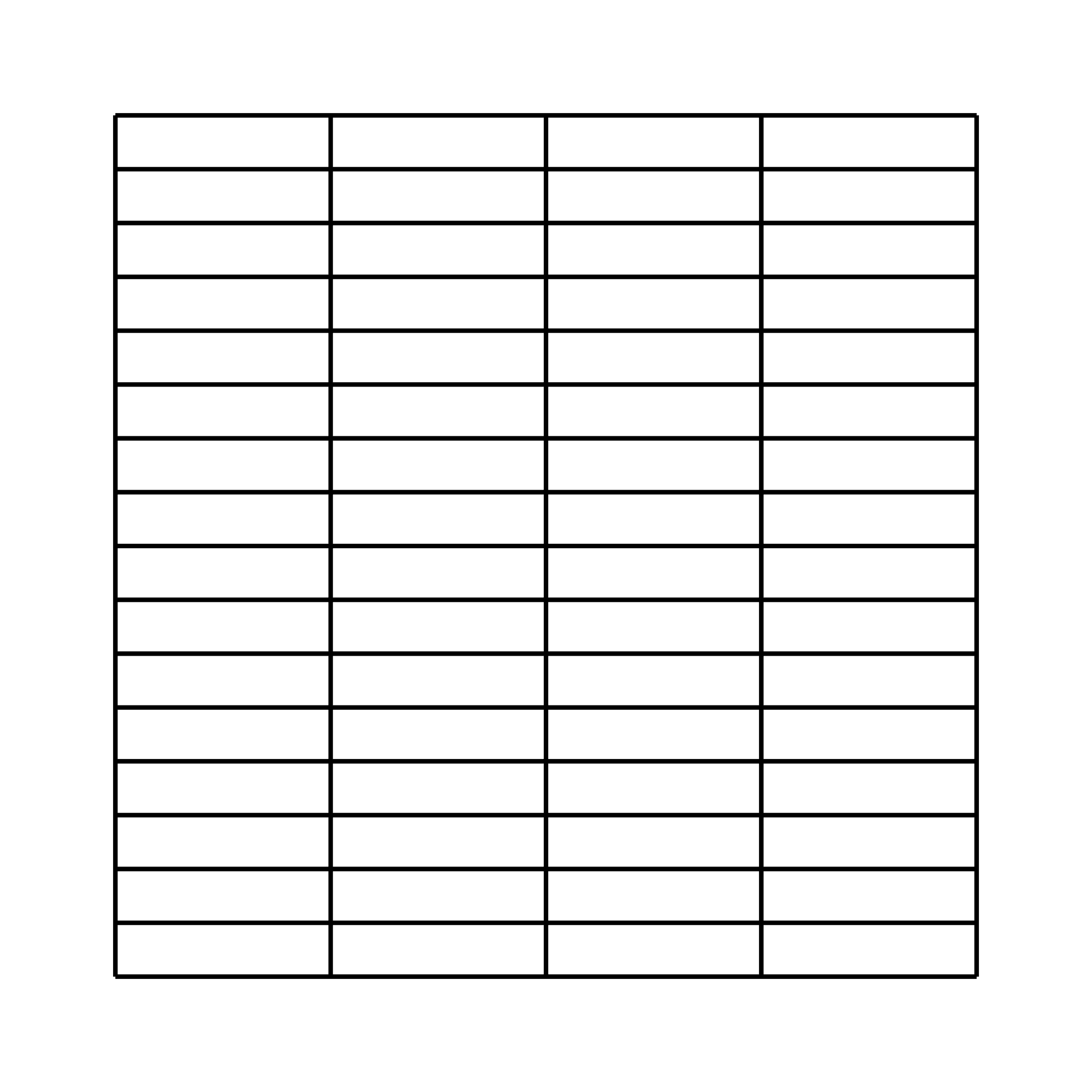}
        \caption{Reference grid.}
        \label{fig: 7.10.1}
    \end{subfigure}
    $\to$
    \begin{subfigure}[h]{0.4\textwidth}
        \centering
        \includegraphics{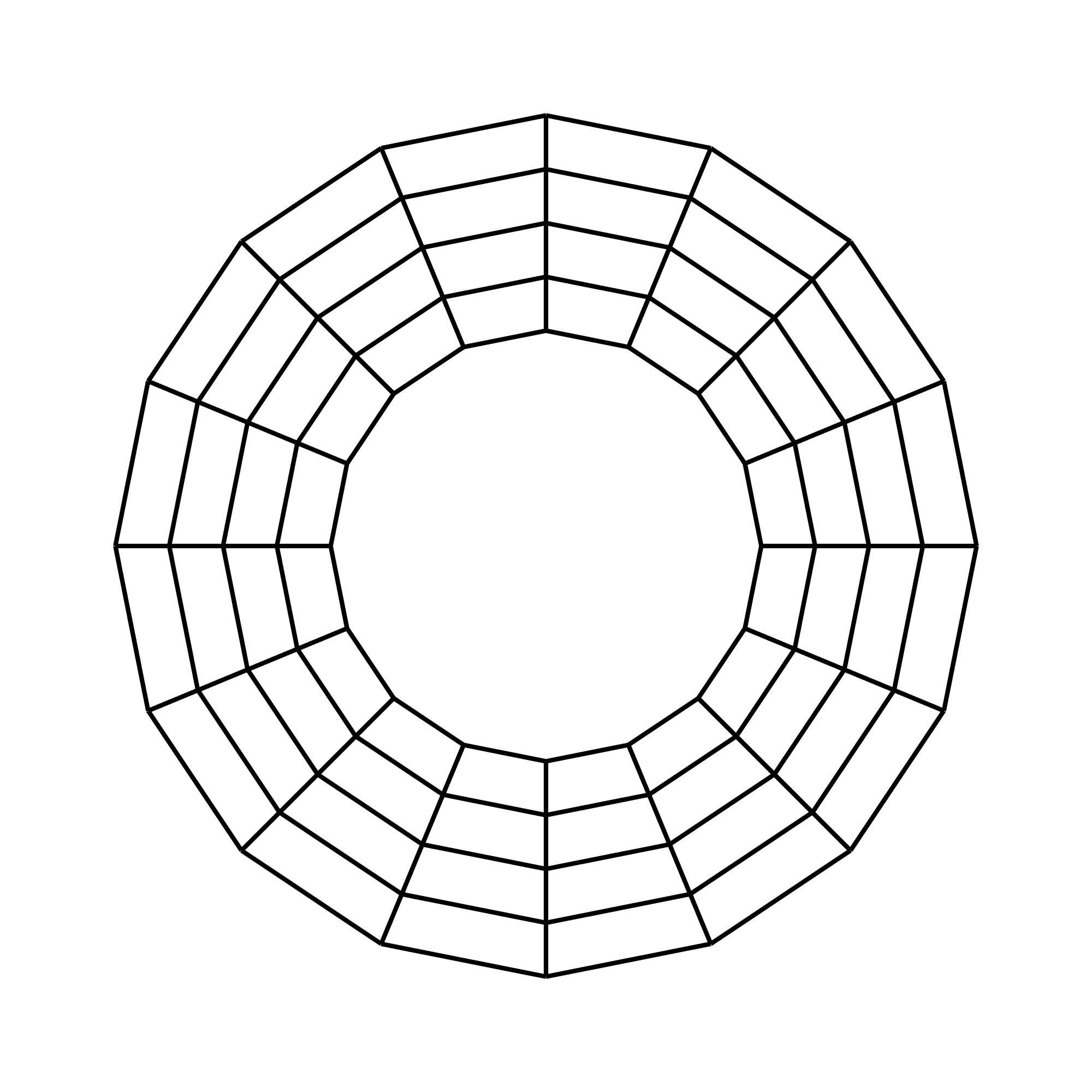}
        \caption{Physical mesh.}
        \label{fig: 7.10.2}
    \end{subfigure}
    \caption{Cartesian reference grid and its image on the annulus domain under the polar map.}
    \label{fig: 7.10}
\end{figure}

\begin{table}[htbp]
    \centering
    \caption{Errors and convergence rates of SDG$_0$ for stokes equations on the annulus domain.}
    \begin{tabular}{ccccccc}
        \toprule
        $n_x \times n_y$ & $\| \boldsymbol{\sigma} - \boldsymbol{\sigma}_h \|_{l^2}$ & Rate & $\| \mathbf{u} - \mathbf{u}_h \|_{l^2}$ & Rate & $\| p - p_h \|_{l^2}$ & Rate \\
        \midrule
        $8 \times 32$ & 8.05e-02 & -- & 7.85e-03 & -- & 1.19e+00 & -- \\
        $16 \times 64$ & 2.20e-02 & 1.87 & 2.37e-03 & 1.73 & 4.25e-01 & 1.48 \\
        $32 \times 128$ & 5.74e-03 & 1.94 & 6.27e-04 & 1.92 & 1.52e-01 & 1.48 \\
        $64 \times 256$ & 1.49e-03 & 1.95 & 1.59e-04 & 1.98 & 6.12e-02 & 1.32 \\
        $128 \times 512$ & 3.87e-04 & 1.94 & 4.00e-05 & 1.99 & 2.69e-02 & 1.19 \\
        \bottomrule
    \end{tabular}
    \label{tab: 7.10}
\end{table}

\subsection{Mixed Dirichlet and Natural Boundary Conditions}

The preceding experiments were conducted under purely Dirichlet boundary conditions.  We now consider a mixed setting: a Dirichlet condition $\mathbf{u} = \mathbf{g}_D$ on $\Gamma_D$ and a natural condition $(\nu \boldsymbol{\sigma} - p \mathbf{I}) \mathbf{n} = \mathbf{g}_N$ on $\Gamma_N$, with $\partial \Omega = \Gamma_D \cup \Gamma_N$. Here, $\nu \boldsymbol{\sigma} - p \mathbf{I}$ represents the pseudo-stress rather than the physical stress, and $(\nu \boldsymbol{\sigma} - p \mathbf{I}) \mathbf{n}$ is exactly the boundary term produced by integration by parts. Such a natural condition is standard in velocity-based formulations of the Stokes problem and is often used as an outflow condition \cite{heywood1996artificial}.

Let $\Gamma^D_h$ and $\Gamma^N_h$ denote the boundary edges lying on $\Gamma_D$ and $\Gamma_N$, respectively. Impose boundary conditions on the discrete spaces:
\begin{align*}
    U_{h, \mathbf{g}_D} &= \{ \mathbf{v}_h \in U_h : \mathbf{v}_h \cdot \mathbf{n} = \mathbf{g}_D \cdot \mathbf{n} \text{ on } \Gamma_D \}, \\
    \Sigma_{h, \mathbf{g}_N} &= \{ \boldsymbol{\tau}_h \in \Sigma_h : \boldsymbol{\tau}_h \mathbf{n} \cdot \mathbf{t} = \tfrac{1}{\nu} \, \mathbf{g}_N \cdot \mathbf{t} \text{ on } \Gamma_N \}.
\end{align*}
The scheme then reads: find $\boldsymbol{\sigma}_h \in \Sigma_{h, \mathbf{g}_N}$, $\mathbf{u}_h \in U_{h, \mathbf{g}_D}$ and $p_h \in P_h$ such that
\begin{align*}
    (\boldsymbol{\sigma}_h, \boldsymbol{\tau}_h) - B^*_h (\mathbf{u}_h, \boldsymbol{\tau}_h) &= \sum_{e \in \Gamma^D_h} \int_e (\mathbf{g}_D \cdot \mathbf{t})(\boldsymbol{\tau}_h \mathbf{n} \cdot \mathbf{t}) \, ds, & &\forall \boldsymbol{\tau}_h \in \Sigma_{h, 0}, \\
    \nu B_h (\boldsymbol{\sigma}_h, \mathbf{v}_h) - b^*_h (p_h, \mathbf{v}_h) &= (\mathbf{f}, \mathbf{v}_h) + \sum_{e \in \Gamma^N_h} \int_e (\mathbf{g}_N \cdot \mathbf{n})(\mathbf{v}_h \cdot \mathbf{n}) \, ds, & &\forall \mathbf{v}_h \in U_{h, 0}, \\
    b_h (\mathbf{u}_h, q_h) &= 0, & &\forall q_h \in P_h.
\end{align*}
Two features distinguish this from the pure-Dirichlet scheme. On $\Gamma_D$, the normal velocity is imposed essentially and the tangential velocity weakly, as before. On $\Gamma_N$, the velocity is left free and the natural condition is split: its tangential part is imposed essentially on the velocity-gradient space, while its normal part enters weakly as the surface load. Moreover, since the natural boundary condition determines the pressure level, the zero-mean constraint is dropped.

We test the scheme on $\Omega = (0, 1)^2$ with $\Gamma_N = \{ x = 1 \}$ and $\Gamma_D = \partial \Omega \setminus \Gamma_N$. Set $\nu = 1$ and take the manufactured solution
\begin{align*}
    \mathbf{u} = (- \cos (\pi x) \sin (\pi y), \ \sin (\pi x) \cos (\pi y)), \qquad p = \cos (\pi x) \cos (\pi y),
\end{align*}
from which $\mathbf{f}$, the Dirichlet data $\mathbf{g}_D = \mathbf{u}|_{\Gamma_D}$, and the natural data $\mathbf{g}_N = (\nu \nabla \mathbf{u} - p \mathbf{I}) \mathbf{n}|_{\Gamma_N}$ are computed. Table~\ref{tab: 7.11} reports the errors and convergence rates. The scheme retains second-order accuracy for all variables, confirming that the mixed Dirichlet--natural boundary treatment does not degrade the convergence order.

\begin{table}[htbp]
    \centering
    \caption{Errors and convergence rates under mixed boundary conditions.}
    \begin{tabular}{ccccccc}
        \toprule
        $n_x \times n_y$ & $\| \boldsymbol{\sigma} - \boldsymbol{\sigma}_h \|_{l^2}$ & Rate & $\| \mathbf{u} - \mathbf{u}_h \|_{l^2}$ & Rate & $\| p - p_h \|_{l^2}$ & Rate \\
        \midrule
        $8 \times 8$ & 1.34e-01 & -- & 7.09e-03 & -- & 1.13e-01 & -- \\
        $16 \times 16$ & 3.48e-02 & 1.94 & 1.55e-03 & 2.20 & 3.25e-02 & 1.80 \\
        $32 \times 32$ & 8.82e-03 & 1.98 & 3.71e-04 & 2.06 & 8.48e-03 & 1.94 \\
        $64 \times 64$ & 2.21e-03 & 1.99 & 9.15e-05 & 2.02 & 2.15e-03 & 1.98 \\
        $128 \times 128$ & 5.54e-04 & 2.00 & 2.28e-05 & 2.01 & 5.39e-04 & 1.99 \\
        \bottomrule
    \end{tabular}
    \label{tab: 7.11}
\end{table}

\section{Conclusion}

This work proposes an implementation-friendly SDG scheme based on Cartesian grids for Stokes and Navier-Stokes equations. Based on the staggered quadrilateral meshes extracted from the Cartesian grid, we construct piecewise-constant spaces with carefully-designed staggered continuity for velocity, pressure, and velocity gradient. Leveraging tailored basis functions, an explicit pointwise formulation of the scheme is derived. Local static condensation is employed to eliminate the diagonal entries of the velocity gradient; the remaining off-diagonal components can be further removed by standard mass lumping. The nonlinear convection term is discretized by a hybrid strategy that couples a MFE treatment of the velocity gradient with a DG-style discretization of the advective flux. For the temporal discretization of Navier-Stokes equations, we adopt the CN-SAV approach, which admits an efficient three-way splitting.

Concerning properties of the scheme, we provide a comprehensive theoretical analysis and numerical validation. For Stokes equations, we rigorously prove pressure robustness and second-order superconvergence for all variables on general non-uniform Cartesian grids. For Navier-Stokes equations, we prove the discrete convection term is second-order consistent; combined with the existing CN-SAV theory, the overall scheme is unconditionally energy-stable and second-order accurate. These properties are validated by numerical experiments: manufactured-solution confirms second-order accuracy; a no-flow problem verifies pressure robustness; Taylor vortex flow demonstrates accuracy under inhomogeneous boundary conditions; lid-driven cavity benchmarks match canonical reference results; tests on low-regularity solutions and anisotropic meshes show that the scheme remains robust beyond the theoretical assumptions; experiments on curved domains and non-Cartesian meshes confirm that the method remains effective on more general geometries; a mixed boundary condition test confirms second-order accuracy under more general boundary settings. Moreover, numerical experiments indicate that the mass-lumped variant achieves the same accuracy and robustness as the original scheme.

\section*{Acknowledgments}
The research of Eric Chung is partially supported by the Hong Kong RGC General Research Fund (Project numbers 14305423 and 14305624).

\appendix
\renewcommand{\theequation}{\thesection.\arabic{equation}}
\renewcommand{\theHequation}{\thesection.\arabic{equation}}
\setcounter{equation}{0}

\section{Derivation of the Pointwise Formulation} \label{sec: appendix}

We detail the computation leading to the condensed $\mathrm{SDG}_0$ scheme \eqref{eq: condensed SDG 1}--\eqref{eq: condensed SDG 5}. As in the proof of Lemma~\ref{lem: truncation error 1}, we scale the diagonal basis functions $\boldsymbol{\phi}^{\Sigma, x}_{e_{i^+, j^+}}$ and $\boldsymbol{\phi}^{\Sigma, y}_{e_{i^+, j^+}}$ so that their nonzero entry equals $\chi_{T_{i^+, j^+}}$. Taking $\boldsymbol{\tau}_h = \boldsymbol{\phi}^{\Sigma, x}_{e_{i^+, j^+}}$ in \eqref{eq: SDG 1} and applying \eqref{eq: velocity gradient relation},
\begin{align*}
    0 &= (\boldsymbol{\sigma}_h, \boldsymbol{\phi}^{\Sigma, x}_{e_{i^+, j^+}}) - B^*_h (\mathbf{u}_h, \boldsymbol{\phi}^{\Sigma, x}_{e_{i^+, j^+}}) \\
    &= \int_{T^-_{i^+, j^+}} \sigma^{x, x}_h \, dx \, dy + \int_{T^+_{i^+, j^+}} \sigma^{x, x}_h \, dx \, dy + \int_{e_{i, j^+}} u^x_h \, ds - \int_{e_{i + 1, j^+}} u^x_h \, ds \\
    &= \frac{\sigma^{n, x}_{i^+, j^+} - n^y_{i^+, j^+} \sigma^{x, y}_{i^+, j}}{n^x_{i^+, j^+}} \cdot \frac{h^x_{i^+} h^y_{j^+}}{2} + \frac{\sigma^{n, x}_{i^+, j^+} - n^y_{i^+, j^+} \sigma^{x, y}_{i^+, j + 1}}{n^x_{i^+, j^+}} \cdot \frac{h^x_{i^+} h^y_{j^+}}{2} + h^y_{j^+} u^x_{i, j^+} - h^y_{j^+} u^x_{i + 1, j^+} \\
    &= h^x_{i^+} l_{i^+, j^+} \sigma^{n, x}_{i^+, j^+} - \frac{(h^x_{i^+})^2}{2} ( \sigma^{x, y}_{i^+, j} + \sigma^{x, y}_{i^+, j + 1} ) + h^y_{j^+} ( u^x_{i, j^+} - u^x_{i + 1, j^+} ).
\end{align*}
Dividing by $h^x_{i^+} l_{i^+, j^+}$ gives the local condensation relation:
\begin{align} \label{eq: condensation relation}
    \sigma^{n, x}_{i^+, j^+} = \frac{h^x_{i^+}}{2 l_{i^+, j^+}} ( \sigma^{x, y}_{i^+, j} + \sigma^{x, y}_{i^+, j + 1} ) + \frac{h^y_{j^+}}{h^x_{i^+} l_{i^+, j^+}} ( u^x_{i + 1, j^+} - u^x_{i, j^+} ).
\end{align}
For $(i^+, j) \in \mathring{\Lambda}^H_h$, taking $\boldsymbol{\tau}_h = \boldsymbol{\phi}^\Sigma_{e_{i^+, j}}$ in \eqref{eq: SDG 1} and applying \eqref{eq: velocity gradient relation},
\begin{align*}
    0 &= (\boldsymbol{\sigma}_h, \boldsymbol{\phi}^\Sigma_{e_{i^+, j}}) - B^*_h (\mathbf{u}_h, \boldsymbol{\phi}^\Sigma_{e_{i^+, j}}) \\
    &= - \frac{h^x_{i^+}}{h^y_{j^+}} \int_{T^-_{i^+, j^+}} \sigma^{x, x}_h \, dx \, dy - \frac{h^x_{i^+}}{h^y_{j^-}} \int_{T^+_{i^+, j^-}} \sigma^{x, x}_h \, dx \, dy + \int_{T_{i^+, j}} \sigma^{x, y}_h \, dx \, dy \\ &\quad - \frac{h^x_{i^+}}{h^y_{j^+}} \int_{e_{i, j^+}} u^x_h \, ds + \frac{h^x_{i^+}}{h^y_{j^-}} \int_{e_{i + 1, j^-}} u^x_h \, ds \\
    &= - \frac{h^x_{i^+}}{h^y_{j^+}} \cdot \frac{\sigma^{n, x}_{i^+, j^+} - n^y_{i^+, j^+} \sigma^{x, y}_{i^+, j}}{n^x_{i^+, j^+}} \cdot \frac{h^x_{i^+} h^y_{j^+}}{2} - \frac{h^x_{i^+}}{h^y_{j^-}} \cdot \frac{\sigma^{n, x}_{i^+, j^-} - n^y_{i^+, j^-} \sigma^{x, y}_{i^+, j}}{n^x_{i^+, j^-}} \cdot \frac{h^x_{i^+} h^y_{j^-}}{2} \\ &\quad + h^x_{i^+} h^y_j \sigma^{x, y}_{i^+, j} - h^x_{i^+} u^x_{i, j^+} + h^x_{i^+} u^x_{i + 1, j^-} \\
    &= ( \frac{(h^x_{i^+})^3}{2 h^y_{j^+}} + \frac{(h^x_{i^+})^3}{2 h^y_{j^-}} + h^x_{i^+} h^y_j ) \sigma^{x, y}_{i^+, j} - \frac{(h^x_{i^+})^2 l_{i^+, j^+}}{2 h^y_{j^+}} \sigma^{n, x}_{i^+, j^+} - \frac{(h^x_{i^+})^2 l_{i^+, j^-}}{2 h^y_{j^-}} \sigma^{n, x}_{i^+, j^-} \\ &\quad - h^x_{i^+} u^x_{i, j^+} + h^x_{i^+} u^x_{i + 1, j^-}.
\end{align*}
Substituting \eqref{eq: condensation relation} yields the first equation of the condensed system:
\begin{align*}
    &[ h^x_{i^+} h^y_j + \frac{(h^x_{i^+})^3}{4 h^y_{j^+}} + \frac{(h^x_{i^+})^3}{4 h^y_{j^-}} ] \sigma^{x, y}_{i^+, j} - \frac{(h^x_{i^+})^3}{4 h^y_{j^+}} \sigma^{x, y}_{i^+, j + 1} - \frac{(h^x_{i^+})^3}{4 h^y_{j^-}} \sigma^{x, y}_{i^+, j - 1} \\
    & \quad - \frac{1}{2} h^x_{i^+} u^x_{i, j^+} + \frac{1}{2} h^x_{i^+} u^x_{i, j^-} - \frac{1}{2} h^x_{i^+} u^x_{i + 1, j^+} + \frac{1}{2} h^x_{i^+} u^x_{i + 1, j^-} = 0.
\end{align*}
For $(i, j^+) \in \mathring{\Lambda}^V_h$, taking $\mathbf{v}_h = \boldsymbol{\phi}^U_{e_{i, j^+}}$ in \eqref{eq: SDG 2},
\begin{align*}
    \int_{T_{i, j^+}} f^x \, dx \, dy &= \nu B_h (\boldsymbol{\sigma}_h, \boldsymbol{\phi}^U_{e_{i, j^+}}) - b^*_h (p_h, \boldsymbol{\phi}^U_{e_{i, j^+}}) \\
    &= \nu ( \int_{e_{i^+, j}} \sigma^{x, y}_h \, ds - \int_{e_{i^-, j + 1}} \sigma^{x, y}_h \, ds - \int_{e_{i^+, j^+}} \sigma^{n, x}_h \, ds + \int_{e_{i^-, j^+}} \sigma^{n, x}_h \, ds ) \\ &\quad + n^x_{i^+, j^+} \int_{e_{i^+, j^+}} p_h \, ds - n^x_{i^-, j^+} \int_{e_{i^-, j^+}} p_h \, ds \\
    &= \nu ( h^x_{i^+} \sigma^{x, y}_{i^+, j} - h^x_{i^-} \sigma^{x, y}_{i^-, j + 1} - l_{i^+, j^+} \sigma^{n, x}_{i^+, j^+} + l_{i^-, j^+} \sigma^{n, x}_{i^-, j^+} ) \\ &\quad + h^y_{j^+} p_{i^+, j^+} - h^y_{j^+} p_{i^-, j^+}.
\end{align*}
Substituting \eqref{eq: condensation relation} yields the third equation of the condensed system:
\begin{align*}
    & -\nu [ \frac{h^x_{i^+}}{2} \sigma^{x, y}_{i^+, j + 1} - \frac{h^x_{i^+}}{2} \sigma^{x, y}_{i^+, j} + \frac{h^x_{i^-}}{2} \sigma^{x, y}_{i^-, j + 1} - \frac{h^x_{i^-}}{2} \sigma^{x, y}_{i^-, j} + \frac{h^y_{j^+}}{h^x_{i^+}} u^x_{i + 1, j^+} - \frac{2 h^x_i h^y_{j^+}}{h^x_{i^+} h^x_{i^-}} u^x_{i, j^+} \\ &\quad + \frac{h^y_{j^+}}{h^x_{i^-}} u^x_{i - 1, j^+} ] + h^y_{j^+} p_{i^+, j^+} - h^y_{j^+} p_{i^-, j^+} = \int_{T_{i, j^+}} f^x \, dx \, dy.
\end{align*}
For $(i^+, j^+) \in \Lambda^D_h$, taking $q_h = \phi^P_{e_{i^+, j^+}}$ in \eqref{eq: SDG 3},
\begin{align*}
    0 &= b_h (\mathbf{u}_h, \phi^P_{e_{i^+, j^+}}) \\
    &= \int_{e_{i + 1, j^+}} u^x_h \, ds - \int_{e_{i, j^+}} u^x_h \, ds + \int_{e_{i^+, j + 1}} u^y_h \, ds - \int_{e_{i^+, j}} u^y_h \, ds \\
    &= h^y_{j^+} u^x_{i + 1, j^+} - h^y_{j^+} u^x_{i, j^+} + h^x_{i^+} u^y_{i^+, j + 1} - h^x_{i^+} u^y_{i^+, j},
\end{align*}
which is the fifth equation of the condensed system. The remaining equations of the condensed system are derived similarly.

Finally, we explain the reconstruction \eqref{eq: reconstructed diagonal velocity gradient}. By \eqref{eq: velocity gradient relation}, taking the average of $\sigma^{x, x}_h$ over $T_{i^+, j^+}$ yields
\begin{align*}
    \frac{1}{2} ( \sigma^{x, x}_h |_{T^-_{i^+, j^+}} + \sigma^{x, x}_h |_{T^+_{i^+, j^+}} ) = \frac{1}{n^x_{i^+, j^+}} [ \sigma^{n, x}_{i^+, j^+} - \frac{n^y_{i^+, j^+}}{2} ( \sigma^{x, y}_{i^+, j} + \sigma^{x, y}_{i^+, j + 1} ) ]
\end{align*}
Substituting \eqref{eq: condensation relation} into the above expression results in
\begin{align*}
    \frac{1}{2} ( \sigma^{x, x}_h |_{T^-_{i^+, j^+}} + \sigma^{x, x}_h |_{T^+_{i^+, j^+}} ) = \frac{u^x_{i + 1, j^+} - u^x_{i, j^+}}{h^x_{i^+}} = d_x u^x_{i^+, j^+}.
\end{align*}
Thus the cell average of $\sigma^{x, x}_h$ coincides with the difference quotient $d_x u^x_{i^+, j^+}$. The reconstruction for $\sigma^{y, y}_h$ follows analogously.

\end{document}